\numberwithin{equation}{section}
\newtheorem{thm}{Theorem}[section]
\newtheorem{lem}[thm]{Lemma}
\newtheorem{cor}[thm]{Corollary}
\newcommand{\be}{\begin{equation}}
\newcommand{\ee}{\end{equation}}
\newcommand{\ba}{\begin{array}}
\newcommand{\ea}{\end{array}}
\renewcommand{\em}{\it}
\newcommand{\al}{\alpha}
\newtheorem{rem}[thm]{Remark}
\newcommand{\bea}{\begin{eqnarray}}
\newcommand{\eea}{\end{eqnarray}}
\begin{document}
\title[Balanced modular parameterizations]{Balanced modular
  parameterizations}


\author{Tim Huber, Danny Lara and Esteban Melendez}
\address{Department of Mathematics, University of Texas - Pan American, 1201 West University Avenue,  Edinburg, Texas 78539,  USA}



\subjclass[2010]{Primary 11F03; Secondary 11F11}
\begin{abstract}
For prime levels $5 \le p \le 19$, sets of $\Gamma_{0}(p)$-permuted theta quotients are constructed that
generate the graded rings of modular forms of positive
integer weight for $\Gamma_{1}(p)$. An explicit formulation of the
permutation representation and several applications are given,
including a new representation for the number of $t$-core partitions. 
The $\Gamma_{0}(p)$-action induces coefficient symmetries within
representations for modular forms and invariance
subgroups for coupled systems of
differential equations. 
The symmetry for levels $p = 5,7,11$ is linked to the Kleinian
automorphism groups.
\end{abstract}

\maketitle





\vspace{-0.2in}
\section{Introduction}

The graded ring of modular forms for a finite index subgroup of the full
modular group is isomorphic to a polynomial ring in two or
more generators modulo a finite set of relations \cite{MR0337993},
\cite[p. 249]{MR1221103}.
Certain classical polynomial representations for modular forms exhibit
coefficient symmetry. For example, the \textit{Klein polynomials}, whose roots
encode distinguished points of the stereographically projected circumsphere
for a regular icosahedron, are symmetric in absolute value about the
middle coefficients
 \begin{align} \label{kl1}
K_{v}(\Lambda) = \left (1-11 \Lambda -\Lambda^2\right)^5&, \qquad
K_{e}(\Lambda) = 1 + 228 \Lambda + 494 \Lambda^{2} - 228 \Lambda^{3} + \Lambda^{4}, \\
K_{f}(\Lambda) = 1- 522&\Lambda  - 10005 \Lambda^2- 10005\Lambda^4 +
522\Lambda^5+ \Lambda^6. \label{kl2}
  \end{align}
The polynomials $K_{e}(\Lambda)$ and $K_{f}(\Lambda)$, encoding the
edge and face points,
correspond to representations
for Eisenstein series
in terms of two
modular parameters of level five
\begin{align}
  \label{eq:24}
B^{20} K_{e}&(\Lambda) = 1+ 240 \sum_{n=1}^{\infty} \frac{n^{3} q^{n}}{1 -
    q^{n}}, \quad B^{30} K_{f}(\Lambda) = 1 - 504 \sum_{n=1}^{\infty} \frac{n^{5} q^{n}}{1 -
    q^{n}}, \quad |q|<1,\\ 
%
& A^{5}(q)
= q\frac{(q;q)_{\infty}^{2}}{(q^{2}, q^{3}; q^{5})_{\infty}^{5}},
\qquad B^{5}(q)= \frac{(q;q)_{\infty}^{2}}{(q, q^{4}; q^{5})_{\infty}^{5}},
\qquad \Lambda = A^5/B^5, \label{ab} \end{align}
where $(a;q)_{n} = \prod_{k=0}^{n-1} 1- aq^{k}$ and $(a_1, a_2,
\ldots, a_r ; q)_{n} = \prod_{j-1}^{r} (a_j; q)_{n}$ for $n \in \Bbb N
\cup {\infty}$.
These are special cases of more general symmetric parameterizations in $A^{5}, B^{5}$ from \cite{qeis}.
At level seven, certain modular forms are symmetric functions of the parameters
\begin{align}
  \label{eq:68}
  x = q \frac{(q^{2}, q^{5}, q^{7}, q^{7}; q^{7})_{\infty}}{(q^{3},
    q^{4}; q^{7})_{\infty}^{2}}, \quad y =  -q \frac{(q, q^{6}, q^{7}, q^{7}; q^{7})_{\infty}}{(q^{2},
    q^{5}; q^{7})_{\infty}^{2}},\quad z = \frac{(q^{3}, q^{4}, q^{7}, q^{7}; q^{7})_{\infty}}{(q,
    q^{6}; q^{7})_{\infty}^{2}}.
\end{align} 
For example, the Hecke Eisenstein series twisted, respectively by the Jacobi symbol
and trivial character $\chi_{1,7}$ modulo seven have the symmetric
representations  \cite{cooper_57,deq_sept}
\begin{align}
  \label{eq:70}
x+y+z = 1 + 2 &\sum_{n=1}^{\infty} \left ( \frac{n}{7} \right )
\frac{q^{n}}{1 - q^{n}}, \quad 
x^{2} + y^{2} + z^{2} = 1 +4\sum_{{n=1}}^{\infty} \frac{\chi_{1,7}(n)nq^{n}}{1-q^{n}},
\end{align}
and are connected to one other by the curious identity \cite[Ch. 21, Entry 5 (i)]{ramnote}
\begin{align}
  \label{eq:87}
  (x+y+z)^{2} = x^{2} + y^{2} + z^{2}.
\end{align}
This identity results from the Klein quartic equation,
with symmetric form \cite{deq_sept}
\begin{align}
  \label{eq:86}
  xy + xz + y z = 0.
\end{align}

Many formulations for modular forms of prime level, not necessarily in symmetric form, appear in the work of Klein and
Ramanujan.
Because the symmetry may appear incidental, no unified study of
symmetric modular parameterizations or extensions to other settings have been undertaken. In the
present work, such
symmetric constructions are shown to be special cases of more general balanced
parameterizations for modular forms of prime level $p$ with $5 \le p \le 19$. The coefficient symmetry
is a hallmark of a special collection of theta
quotients generating the graded ring of modular forms on
\begin{align}
  \label{eq:25}
\Gamma_{1}(p) &= \left \{
  \begin{pmatrix}
    a & b \\ c & d 
  \end{pmatrix} \in PSL(2, \Bbb Z) \mid c \equiv 0,\ a\equiv d \equiv
  1  \pmod{p} \right\}.
\end{align}
In each case, the polynomial generators are permuted up to a change
of sign by
\begin{align}
  \label{eq:74}
  &\Gamma_{0}(p) = \left \{
  \begin{pmatrix}
    a & b \\ c & d 
  \end{pmatrix} \in PSL(2, \Bbb Z) \mid c \equiv 0 \pmod{p} \right\}.
\end{align}
We show that balanced polynomial representations for modular forms on $\Gamma_{0}(p)$ result from a
nontrivial permutative action on generators for the graded rings of modular forms for $\Gamma_{1}(p)$
induced by modular transformation formulas. 
In addition to the list of permuted generators of level five and seven
already presented, a corresponding set of $\Gamma_{0}(11)$-permuted
 generators for the graded ring of forms on
$\Gamma_{1}(11)$ will be given by
\begin{align}
  \label{eq:84}
 &\frac{(q^{4}, q^{7}, q^{11}, q^{11}; q^{11})_{\infty}}{(q,
    q^{10}, q^{2}, q^{9}; q^{11})_{\infty}},\quad  q \frac{(q^{5}, q^{6}, q^{11}, q^{11}; q^{11})_{\infty}}{(q^{3},
    q^{8}, q^{4}, q^{7}; q^{11})_{\infty}},\quad q^{2} \frac{(q, q^{10}, q^{11}, q^{11}; q^{11})_{\infty}}{(q^{3},
    q^{8}, q^{5}, q^{6}; q^{11})_{\infty}}, \\ &\qquad \quad \qquad q \frac{(q^{3}, q^{8}, q^{11}, q^{11}; q^{11})_{\infty}}{(q^{2},
    q^{9}, q^{4}, q^{7}; q^{11})_{\infty}}, \qquad q \frac{(q^{2}, q^{9}, q^{11}, q^{11}; q^{11})_{\infty}}{(q^{5},
    q^{6}, q, q^{10}; q^{11})_{\infty}}.
\end{align}
A similarly permuted set of generators for the graded ring of forms for
$\Gamma_{1}(13)$ is
\begin{align}
  \label{eq:84}
 &\frac{(q^{6}, q^{7}, q^{13}, q^{13}; q^{13})_{\infty}}{(q,
    q^{12}, q^{3}, q^{10}; q^{13})_{\infty}},\quad  q \frac{(q^{5}, q^{8}, q^{13}, q^{13}; q^{13})_{\infty}}{(q^{3},
    q^{10}, q^{4}, q^{9}; q^{13})_{\infty}},\quad \ \ \ q \frac{(q, q^{2}, q^{11}, q^{13}; q^{13})_{\infty}}{(q,
    q^{12}, q^{4}, q^{9}; q^{13})_{\infty}}, \\  q &\frac{(q^{4}, q^{9}, q^{13}, q^{13}; q^{13})_{\infty}}{(q^{2},
    q^{11}, q^{5}, q^{8}; q^{13})_{\infty}}, \quad q^{2} \frac{(q^{3}, q^{10}, q^{13}, q^{13}; q^{13})_{\infty}}{(q^{5},
    q^{8}, q^{6}, q^{7}; q^{13})_{\infty}}, \quad q^{2} \frac{(q, q^{12}, q^{13}, q^{13}; q^{13})_{\infty}}{(q^{2},
    q^{11}, q^{6}, q^{7}; q^{13})_{\infty}}.
\end{align}
A set of $\Gamma_{0}(17)$-permuted generators for the graded ring
of forms on $\Gamma_{1}(17)$ is
\begin{align}
  \label{eq:85}
   &\frac{(q^{8}, q^{9}, q^{17}, q^{17}; q^{17})_{\infty}}{(q^{2},
    q^{15}, q^{3}, q^{14}; q^{17})_{\infty}},\quad  q \frac{(q^{5}, q^{12}, q^{17}, q^{17}; q^{17})_{\infty}}{(q^{3},
    q^{14}, q^{4}, q^{13}; q^{17})_{\infty}},\quad \ \ \ q^{3} \frac{(q, q^{16}, q^{17}, q^{17}; q^{17})_{\infty}}{(q^{4},
    q^{13}, q^{6}, q^{11}; q^{17})_{\infty}}, \\  q^{2} &\frac{(q^{7}, q^{10}, q^{17}, q^{17}; q^{17})_{\infty}}{(q^{6},
    q^{11}, q^{8}, q^{9}; q^{17})_{\infty}}, \quad q^{3} \frac{(q^{2}, q^{15}, q^{17}, q^{17}; q^{17})_{\infty}}{(q^{5},
    q^{12}, q^{8}, q^{9}; q^{17})_{\infty}}, \quad q\frac{(q^{3}, q^{14}, q^{17}, q^{17}; q^{17})_{\infty}}{(q,
    q^{16}, q^{5}, q^{12}; q^{17})_{\infty}}, \\ & \quad \  \qquad \qquad
  q \frac{(q^{4}, q^{13}, q^{17}, q^{17}; q^{17})_{\infty}}{(q,
    q^{16}, q^{7}, q^{10}; q^{17})_{\infty}}, \quad q\frac{(q^{6}, q^{11}, q^{17}, q^{17}; q^{17})_{\infty}}{(q^{2},
    q^{15}, q^{7}, q^{10}; q^{17})_{\infty}}.
\end{align}
Finally, a set of permuted generators for
the graded ring of forms for $\Gamma_{1}(19)$ is
{\allowdisplaybreaks \begin{align}
  \label{eq:85}
   &\frac{(q^{8}, q^{11}, q^{9}, q^{10}, q^{19}, q^{19}; q^{19})_{\infty}}{(q^{3},
    q^{16}, q^{4}, q^{15},q^{5}, q^{14}; q^{19})_{\infty}},\qquad  q
  \frac{(q^{2}, q^{17}, q^{7}, q^{12}, q^{7}, q^{12}; q^{19})_{\infty}}{(q,
    q^{18}, q^{4}, q^{15},q^{6}, q^{13}; q^{19})_{\infty}}, \\ & q \frac{(q^{3}, q^{16}, q^{9}, q^{10}, q^{19}, q^{19}; q^{19})_{\infty}}{(q, q^{18}, q^{5}, q^{14}, q^{8}, q^{11}; q^{19})_{\infty}}, \qquad q\frac{(q^{4}, q^{15}, q^{7}, q^{12}, q^{19}, q^{19}; q^{19})_{\infty}}{(q^{2},
    q^{17}, q^{5}, q^{14}, q^{6}, q^{13}; q^{19})_{\infty}}, \\   q^{5} &\frac{(q, q^{18}, q^{3}, q^{16}, q^{19}, q^{19}; q^{19})_{\infty}}{(q^{6},
    q^{13}, q^{8}, q^{11}, q^{9}, q^{10}; q^{19})_{\infty}}, \qquad q^{2} \frac{(q^{4}, q^{15}, q^{5}, q^{14}, q^{19}, q^{19}; q^{19})_{\infty}}{(q^{2},
    q^{17}, q^{7}, q^{12}, q^{8}, q^{11}; q^{19})_{\infty}}, \\ 
  q^{2}& \frac{(q, q^{18}, q^{6},q^{13}, q^{19}, q^{19}; q^{19})_{\infty}}{(q^{2},
    q^{17}, q^{3}, q^{16},q^{9}, q^{10}; q^{19})_{\infty}}, \qquad q^{2}\frac{(q^{5}, q^{14}, q^{8}, q^{11},q^{19}, q^{19}; q^{19})_{\infty}}{(q^{4},
    q^{15}, q^{7}, q^{12},q^{9}, q^{10}; q^{19})_{\infty}}, \\  & \qquad \qquad \qquad \qquad q\frac{(q^{2}, q^{17}, q^{6}, q^{13},q^{19}, q^{19}; q^{19})_{\infty}}{(q,
    q^{18}, q^{3}, q^{16},q^{7}, q^{12}; q^{19})_{\infty}}. \label{iwb}
\end{align}}

The distinguishing feature of each claimed set of polynomial generators
appearing here is that any modular form of positive integer weight on a subgroup containing
$\Gamma_{0}(p)$ enjoys a polynomial representation exhibiting a certain coefficient symmetry.
\begin{thm} \label{struct}
  For each prime $p$ with $5\le p\le 19$, let $\{x_{k}\}_{k=1}^{\frac{p-1}{2}}$ denote the given  generators for the graded ring of modular forms for $\Gamma_{1}(p)$.
Any modular form of weight $k$ for a subgroup of $PSL(2,\Bbb Z)$ containing $\Gamma_{0}(p)$
is representable in the form
\begin{align}
  \label{eq:51}
\sum_{k_{1}+\cdots+k_{(p-1)/2}=k}a_{k_{1},\ldots,k_{(N-1)/2}}\sum_{\sigma\in S_{(p-1)/2}}\epsilon_{\sigma(k_{1}),\ldots,\sigma(k_{(p-2)/2})}x_{1}^{\sigma(k_{1})}\cdots x_{(p-1)/2}^{\sigma(k_{(p-1)/2})},  
\end{align}
over integers $k_{i} \ge 0$, $a_{k_{1},\ldots,k_{(N-1)/2}}\in \Bbb C$, $S_{n}$ the symmetric group on $n$ elements, and  
$$\epsilon_{\sigma(k_{1}),\ldots,\sigma(k_{(N-2)/2})}\in\{\pm1\}.$$
In particular, the coefficient of each monomial in \eqref{eq:51} agrees in absolute
value with the coefficient of any other monomial obtained through a permutation
of the exponents.
\end{thm}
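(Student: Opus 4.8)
The plan is to deduce the coefficient symmetry from the invariance of the graded ring $M_{*}(\Gamma_{1}(p))$ under the deck group $\Gamma_{0}(p)/\Gamma_{1}(p)$, transported through an explicit polynomial presentation. First I would fix the presentation $M_{*}(\Gamma_{1}(p)) \cong \mathbb{C}[x_{1},\ldots,x_{n}]/I_{p}$ with $n = (p-1)/2$, where the $x_{k}$ are the given generators and $I_{p}$ is the finitely generated ideal of relations; for each prime $5 \le p \le 19$ this presentation is the input data assembled earlier, which I may assume. Next, from the modular transformation formulas for the generalized Dedekind eta quotients comprising the $x_{k}$, I would compute the action of a generator $\gamma$ of $\Gamma_{0}(p)/\Gamma_{1}(p) \cong (\mathbb{Z}/p\mathbb{Z})^{\times}/\{\pm 1\}$, a cyclic group of order $n$: after ordering the generators appropriately, $\gamma$ carries $(x_{1},\ldots,x_{n})$ to $(\eta_{1}x_{\pi(1)},\ldots,\eta_{n}x_{\pi(n)})$ for an $n$-cycle $\pi \in S_{n}$ and signs $\eta_{k}\in\{\pm 1\}$. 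This is the explicit permutation representation promised in the abstract.

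Since $PSL(2,\mathbb{F}_{p})$ acts primitively on $\mathbb{P}^{1}(\mathbb{F}_{p})$, the image of $\Gamma_{0}(p)$ is a maximal subgroup, so a modular form for any group between $\Gamma_{0}(p)$ and $PSL(2,\mathbb{Z})$ — and, more generally, a modular form for $\Gamma_{0}(p)$ carrying a character modulo $p$ — is precisely an eigenvector of $\gamma$ inside $M_{*}(\Gamma_{1}(p))$. The theorem thus reduces to the algebraic assertion that, for a suitable universal sign function $\epsilon$ on exponent tuples, every $\gamma$-eigenvector of $\mathbb{C}[x_{1},\ldots,x_{n}]/I_{p}$ is congruent modulo $I_{p}$ to a polynomial of the balanced shape appearing in \eqref{eq:51}. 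The sign function $\epsilon$ is essentially forced: for each balanced polynomial $B_{\lambda} = \sum_{b}\epsilon_{b}\,x^{b}$, the sum running over rearrangements $b$ of a partition $\lambda$, the ratios $\epsilon_{b}/\epsilon_{\pi \cdot b}$ along a $\pi$-orbit must equal the matching product of the $\eta_{k}$ so that $B_{\lambda}$ becomes a $\gamma$-eigenvector; extending $\epsilon$ across the remaining $\pi$-orbits is a finite consistency check that goes through because $\gamma$ has finite order. One also verifies, by inspection of the listed relations for each $p$, that $I_{p}$ is generated by balanced polynomials — equivalently, that the $\epsilon$-twisted action of the full symmetric group $S_{n}$ preserves $I_{p}$ and descends to $M_{*}(\Gamma_{1}(p))$.

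The crux, and the step I expect to be the main obstacle, is a dimension count carried out weight by weight (and, where needed, prime by prime): for each weight $k$ and each eigenvalue $\zeta$ of $\gamma$ on $M_{k}(\Gamma_{1}(p))$, I would show that the $\zeta$-balanced polynomials of degree $k$ which are nonzero modulo $I_{p}$ are linearly independent there and number exactly $\dim M_{k}(\Gamma_{0}(p),\chi)$, with $\chi$ the character satisfying $\chi(\gamma)=\zeta$. The mechanism is that the $\gamma$-invariant part of $I_{p}$ is exactly large enough to collapse the space of $\gamma$-invariant polynomials — spanned by the $\pi$-orbit sums of monomials — onto the span of the balanced polynomials, the $\pi$-orbit sums that fail to be $S_{n}$-symmetric being absorbed by balanced relations. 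This matching should follow from the Hilbert series of $M_{*}(\Gamma_{1}(p))$ together with the $\langle\gamma\rangle$-decomposition of its graded pieces and the standard dimension formulas for spaces of modular forms with character; for $p = 11,13,17,19$, where $I_{p}$ carries several low-degree generators and the eta-quotient transformations are more intricate, I anticipate verifying the required identities among $\pi$-orbit sums directly from the explicit relation lists. Given the count, writing a form $f$ in the shape \eqref{eq:51} is routine: decompose $f$ into $\gamma$-eigencomponents, expand each in the spanning set of balanced polynomials, and read off the coefficients $a_{k_{1},\ldots,k_{n}}$; the construction of $\epsilon$ then yields the asserted equality of absolute values of coefficients of permuted monomials.
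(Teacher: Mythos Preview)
Your proposal and the paper share the two load-bearing ingredients: (i) the $x_k$ generate $\mathcal{M}(\Gamma_1(p))$ (the paper's Theorem~\ref{lem:mod}), and (ii) $\Gamma_0(p)$ acts on them by a signed permutation with image $(\mathbb{Z}/p\mathbb{Z})^*/\{\pm 1\}$ (the paper's Theorems~\ref{te} and~\ref{pmr}). From these, averaging any polynomial representative of a $\Gamma_0(p)$-form over the diamond action yields a representative whose monomial coefficients are matched in absolute value along \emph{cyclic} orbits. The paper stops here: it explicitly says the claims of Theorem~\ref{struct} are proved ``in the form of Theorems~\ref{sf}--\ref{lem:mod}'' and does not separately argue the passage from the cyclic group to the full symmetric group $S_{(p-1)/2}$ appearing in~\eqref{eq:51}.

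You go further, and correctly flag that for $p\ge 7$ the cyclic group is strictly smaller than $S_{(p-1)/2}$, so the averaging argument alone does not literally produce the form~\eqref{eq:51}; one needs the relations $I_p$ to collapse each cyclic-invariant class onto a fully balanced representative. Your proposed cure --- a weight-by-weight dimension match showing that, modulo $I_p$, the balanced polynomials span the diamond-invariants --- is the right diagnosis and a legitimate route, but it is only sketched. Two points would need tightening before it counts as a proof. First, for Theorem~\ref{struct} you need only the eigenvalue-$1$ case: a form on any group containing $\Gamma_0(p)$ is in particular $\Gamma_0(p)$-invariant, so the maximality digression and the treatment of general $\gamma$-eigenvalues are superfluous here. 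Second, the assertions that $I_p$ is generated by balanced polynomials and that the sign-twisted $S_n$-action descends to $\mathcal{M}(\Gamma_1(p))$ are precisely the nontrivial content; they must be checked from the explicit relation lists for each $p$ (as you anticipate for $p=11,13,17,19$) rather than assumed. In short, your approach is more scrupulous than the paper's on exactly the point where the paper is terse, but the step you label ``the crux'' is genuinely the crux, and your outline does not yet discharge it.
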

The coefficient symmetry is induced by transformation formulas
satisfied by the generators. To describe the
symmetry exhibited by forms of level $p$ in terms of the generators, let $\gamma \in PSL(2, \Bbb Z)$ act on the upper half plane by M\"obius transformation, and define the diamond operator on a modular form $f$ of weight $k$ on $\Gamma_{1}(p)$ by
\begin{align} 
\langle \gamma \rangle(f) =(\gamma_{21} \tau + \gamma_{22})^{-k}f(\gamma \tau), \qquad  \label{eq:69}
 \gamma =  (\gamma_{11}, \gamma_{12}; \gamma_{21}, \gamma_{22}) 
\in \Gamma_{0}(p).
\end{align}
\begin{thm} \label{pmr}
The generators for $\Gamma_{1}(p)$ from \eqref{ab}--\eqref{iwb} are
permuted up to change of sign by $\Gamma_{0}(p)$ under action by $\langle \cdot \rangle$, with permutation representation $( \Bbb Z / p \Bbb Z)^{*}/\{\pm 1\}$.
\end{thm}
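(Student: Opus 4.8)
The plan is to realize each generator $x_k$ as an eta-quotient (equivalently, a product of generalized Dedekind eta functions $\eta_{p,g}(\tau) = q^{p B_2(g/p)/2}\prod_{n\ge 0}(1-q^{pn+g})(1-q^{pn+p-g})$) and to apply the known transformation law of $\eta_{p,g}$ under $\Gamma_0(p)$. First I would rewrite each listed infinite product in the shape
\begin{equation*}
x_k(\tau) = q^{r_k}\prod_{g=1}^{p-1}\eta_{p,g}(\tau)^{\,m_{k,g}},
\end{equation*}
reading off the exponent vector $(m_{k,g})_g$ from the $q$-Pochhammer data in \eqref{ab}--\eqref{iwb}; the weight-zero and integrality conditions are automatic since these are the stated ring generators. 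The key classical input is that for $\gamma = (a,b;c,d)\in\Gamma_0(p)$ one has $\eta_{p,g}(\gamma\tau) = \varepsilon(\gamma)\,(c\tau+d)^{1/2}\,\eta_{p,\,ag}(\tau)$ up to an explicit root of unity and the index $ag$ taken modulo $p$ (with $\eta_{p,g} = \eta_{p,-g}$), so the cusp data of $x_k$ is transported by multiplication by $a \in (\Z/p\Z)^*$ on the eta-indices. Since the diamond operator \eqref{eq:69} strips off the automorphy factor $(c\tau+d)^{k}$, the action of $\langle\gamma\rangle$ on the weight-zero ratios is exactly this index relabeling, up to an overall sign coming from the accumulated roots of unity (which must be $\pm1$ because $\langle\gamma\rangle$ has order dividing $(p-1)/2$ on forms for $\Gamma_1(p)$ and the $x_k$ are real power series).

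The second step is purely combinatorial: I would check, prime by prime for $p = 5,7,11,13,17,19$, that the map $x_k \mapsto \pm x_{k'}$ induced by the index multiplication $g \mapsto ag$ permutes the finite set $\{x_1,\dots,x_{(p-1)/2}\}$ (rather than leaving the span but not the set). This is the content that makes the "balanced" phenomenon work, and it is verified by a direct matching of exponent vectors: the orbit of each $x_k$ under $a\in(\Z/p\Z)^*$ must land among the listed generators. Because $-1$ acts trivially on eta-indices, the permutation action factors through $(\Z/p\Z)^*/\{\pm1\}$, a cyclic group of order $(p-1)/2$, which is precisely the number of generators; so one further checks the action is \emph{simply transitive}, i.e. a generator $a$ of $(\Z/p\Z)^*/\{\pm1\}$ sends $x_k$ to $x_{k+1}$ cyclically (after suitably ordering the list, which is how \eqref{ab}--\eqref{iwb} are in fact written). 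Identifying the explicit $\pm$ signs $\epsilon$ — equivalently, evaluating the product of the eta multiplier systems along the orbit — is then a finite computation of roots of unity, conveniently reduced to tracking $q$-expansion leading coefficients.

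I expect the main obstacle to be the sign (multiplier-system) bookkeeping in the first step: the generalized Dedekind eta transformation carries a multiplier involving a Dedekind sum and a twelfth root of unity, and combining many of them for a single $x_k$ requires care to confirm the net phase is real. The cleanest route is to avoid computing each phase individually and instead argue structurally: $\langle\gamma\rangle$ preserves the space of holomorphic forms for $\Gamma_1(p)$ with rational (indeed integral) $q$-expansions, it is multiplicative, and it permutes the $x_k$ as \emph{functions} by the index-relabeling already established, so the ratio $\langle\gamma\rangle(x_k)/x_{k'}$ is a weight-zero holomorphic modular function for $\Gamma_1(p)$ that is constant (no zeros or poles, by the matched divisor data) with modulus $1$ on each leading term — hence a root of unity, and real because the $x_j$ have real coefficients, hence $\pm1$. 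Finally, Theorem \ref{struct} combined with this permutation action on generators upgrades the statement to the assertion about the full graded ring, and the permutation representation is read off as $(\Z/p\Z)^*/\{\pm1\}$ from the eta-index multiplication. A residual check, for $p=11,13,17,19$ where the number of listed generators exceeds the Krull dimension of the ring, is that the relations among the $x_k$ are themselves $\langle\gamma\rangle$-stable, which follows since $\langle\gamma\rangle$ is a ring automorphism.
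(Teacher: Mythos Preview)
Your approach is sound in outline but takes a genuinely different route from the paper. The paper proves Theorem \ref{pmr} by first expressing each generator as a linear combination of weight-one Eisenstein series $E_{1,\chi}$ (Theorems \ref{sf} and \ref{gtd}) and then invoking the transformation law $\langle\gamma\rangle E_{1,\chi}=\chi(\gamma_{22})E_{1,\chi}$ for $\gamma\in\Gamma_0(p)$. In the Eisenstein basis the diamond action becomes the diagonal matrix $\mathrm{diag}(\chi_2(d),\dots,\chi_{p-1}(d))$, and since the generators are indexed as $\mathcal E_{a,p}=\langle a\rangle\mathcal E_p$ with $a$ ranging over $(\Bbb Z/p\Bbb Z)^*/\{\pm1\}$, both the permutation representation and the sign (forced by $\chi(-1)=-1$ for every odd $\chi$) drop out uniformly from character orthogonality (Theorem \ref{te}). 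Only afterward does the paper identify the $\mathcal E_{a,p}$ with the listed theta quotients, via Theorem \ref{far} (this is Theorem \ref{gtd1}).

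You instead work directly with the product side, using the index-multiplication law for generalized Dedekind etas under $\Gamma_0(p)$ and a prime-by-prime check that exponent vectors are permuted. This is more elementary in that it bypasses the elliptic residue calculations of Theorem \ref{sf}, but it trades the paper's uniform character argument for case-by-case combinatorics, and your sign determination (via rationality of $q$-expansions and a constant-function argument) is less clean than the paper's, where $\pm1$ is automatic. One correction: the $x_k$ are weight one, not weight zero, so your product formula must carry the factor $\eta^3(p\tau)$ as in the paper's $\mathfrak T_p$ notation; correspondingly, the $(c\tau+d)^{1/2}$ in your $\eta_{p,g}$ transformation and the phrase ``weight-zero ratios'' need adjustment. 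This does not affect the permutation argument, since $\eta(p\tau)$ is $\Gamma_0(p)$-invariant up to multiplier, but the bookkeeping should be fixed.
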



A common thread linking the symmetric constructions is the Hecke
Eisenstein series for $\Gamma_{0}(p)$ twisted by Dirichlet character $\chi$
modulo $p$, defined for weight $k \in \Bbb N$ by
\begin{align} \label{eisdef}
  E_{k,\chi}(\tau) = 1 +  \frac{2}{L(1 - k, \chi)} \sum_{n=1}^{\infty} \chi(n) \frac{n^{k-1} q^{n}}{1 - q^{n}}, \qquad q = e^{2 \pi i \tau},
\end{align}
where $L(1 - k, \chi)$ is the analytic continuation of the associated
Dirichlet $L$-series and $\chi(-1) = (-1)^{k}$. We prove the
above claims in the form of Theorems
\ref{sf}--\ref{lem:mod} by systematically expressing generating theta quotients
in terms of Eisenstein series.

In Section \ref{s_eis}, notable representations are highlighted for modular
forms in terms of the generating parameters. In particular, the
product of elements in each generating set is closely related to the
generating functions for $p$-cores. A new convolution representation for $p$-cores
is given in Corollary \ref{cortc} for $5 \le p \le 19$.
We also derive coupled
systems of differential equations for the generators of level
$5 \le p \le 19$. Each system is invariant under
action by $\Gamma_{0}(p)$. The differential systems of level five and seven
appeared recently in \cite{MR3111558,deq_sept}. Their coefficient invariance is a common feature of the coupled
differential systems derived for each set of generators of level $5
\le p \le 19$.

\begin{thm} \label{th:11}  Let $E_{2}(q) = 1 - 24 \sum_{n=1}^{\infty}
  \frac{n q^{n}}{1 - q^{n}}$ and $\mathscr{P} = E_{2}(q^{5})$. Then 
  \begin{align}
\label{deqa}    q \frac{d}{dq} A &= \frac{1}{60} A \Bigl (-5 A^{10}-66 A^5 B^5+7 B^{10}+5 \mathscr{P} \Bigr ), \\ 
\label{deqb} q \frac{d}{dq} B &= \frac{1}{60} B \Bigl (-5 B^{10}+66 A^5 B^5+7 A^{10}+5 \mathscr{P} \Bigr ), \\ 
q \frac{d}{dq} \mathscr{P} = \frac{5}{12}
\Bigl ( \mathscr{P}^{2}&-B^{20}+ 12 B^{15}A^5 - 14 B^{10}A^{10}- 12 B^5
A^{15}- A^{20}\Bigr ), \label{dpp}
  \end{align}
\end{thm}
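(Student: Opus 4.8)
The plan is to derive each of \eqref{deqa}--\eqref{dpp} by logarithmic differentiation of the product representations, to recognise the outcome as a (quasi)modular form on $\Gamma_{1}(5)$ of controlled weight and depth, and to pin down its polynomial expression in $A$, $B$, $\mathscr{P}$ by matching finitely many Fourier coefficients --- a Sturm-type valence bound for the relevant finite-dimensional space making the finite check conclusive.

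For \eqref{deqa} and \eqref{deqb} I would start from \eqref{ab}, writing $5\log A = \log q + 2\log(q;q)_{\infty} - 5\log(q^{2},q^{3};q^{5})_{\infty}$ and $5\log B = 2\log(q;q)_{\infty} - 5\log(q,q^{4};q^{5})_{\infty}$. Using $q\frac{d}{dq}\log\prod_{n\equiv a\,(5)}(1-q^{n}) = -\sum_{n\equiv a\,(5)}\frac{nq^{n}}{1-q^{n}}$ and $q\frac{d}{dq}\log q = 1$, the logarithmic derivatives $q\frac{d}{dq}\log A$ and $q\frac{d}{dq}\log B$ become explicit $\mathbb{Q}$-linear combinations of $1$ and the partial Lambert series $L_{a}(q) = \sum_{n\equiv a\,(5)}\frac{nq^{n}}{1-q^{n}}$ for $0\le a\le 4$. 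Since $L_{0} = \tfrac{5}{24}(1-\mathscr{P})$ while $L_{0}+\cdots+L_{4} = \tfrac{1}{24}(1-E_{2}(q))$, and since $A^{5}$, $B^{5}$ are weight-$1$ forms for $\Gamma_{1}(5)$ with no zeros on $\mathbb{H}$ (their theta-quotient factors being modular units), the standard quasimodular anomaly --- that $q\frac{d}{dq}\log f - \tfrac{1}{12}E_{2}(q)$ is a holomorphic weight-$2$ form when $f$ has weight $1$ --- together with $E_{2}(q)-5E_{2}(q^{5}) = E_{2}(q)-5\mathscr{P}\in M_{2}(\Gamma_{0}(5))$ shows that $q\frac{d}{dq}\log A - \tfrac{1}{12}\mathscr{P}$ and $q\frac{d}{dq}\log B - \tfrac{1}{12}\mathscr{P}$ lie in $M_{2}(\Gamma_{1}(5))$. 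By the structural results established earlier this space is spanned by weight-$2$ polynomials in the generators, so I would write each of these as such a polynomial, compare the first several $q$-coefficients, and invoke the valence bound to conclude; the outcome is $\tfrac{1}{60}(-5A^{10}-66A^{5}B^{5}+7B^{10})$ for $A$ and $\tfrac{1}{60}(7A^{10}+66A^{5}B^{5}-5B^{10})$ for $B$. Multiplying by $A$ and by $B$ yields \eqref{deqa}--\eqref{deqb}.

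For \eqref{dpp} the chain rule and Ramanujan's identity $q\frac{d}{dq}E_{2} = \tfrac{1}{12}(E_{2}^{2}-E_{4})$ give $q\frac{d}{dq}\mathscr{P} = q\frac{d}{dq}E_{2}(q^{5}) = \tfrac{5}{12}(\mathscr{P}^{2}-E_{4}(q^{5}))$, so \eqref{dpp} is equivalent to the weight-$4$ identity
\[
E_{4}(q^{5}) = B^{20} - 12B^{15}A^{5} + 14B^{10}A^{10} + 12B^{5}A^{15} + A^{20}.
\]
Both sides are holomorphic weight-$4$ forms for $\Gamma_{1}(5)$ --- the right side a weight-$4$ polynomial in $A$, $B$ and the left side the image of $E_{4}$ under $\tau\mapsto 5\tau$ --- so I would again settle it by matching $q$-coefficients up to the valence bound.

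The steps above are routine, and I expect the main difficulty to be largely organisational and computational rather than conceptual, in two parts. First, one needs the precise description of the finite-dimensional spaces $M_{2}(\Gamma_{1}(5))$ and $M_{4}(\Gamma_{1}(5))$ in terms of the generators (from the structural theorems) together with the corresponding valence bounds, so that the coefficient comparisons are decisive; getting the quasimodular anomaly and the reduction of its $E_{2}$-part to $\mathscr{P}$ exactly right is the delicate point there. Second, there is the elementary but lengthy bookkeeping of expanding the $L_{a}$, the theta-quotient products, and $E_{4}(q^{5})$ far enough, while tracking the fractional $q$-exponent carried by $A$. A more self-contained variant would avoid the modular-forms input altogether, re-expressing each $L_{a}$ through classical level-$5$ Eisenstein identities before differentiating; its only cost is that those identities then need separate, comparable proof.
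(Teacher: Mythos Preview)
Your approach is correct and essentially coincides with the paper's. The paper invokes Lemma~\ref{afn} (with $h=pk/12$) to conclude that $\bigl(q\frac{d}{dq}\mathcal{E}_{a,p}\bigr)/\mathcal{E}_{a,p}-\tfrac{p}{12}E_{2}(p\tau)\in\mathcal{M}_{2}(\Gamma_{1}(p))$ and then reads off the coefficients by basis expansion (Theorem~\ref{bu}); for the third equation it likewise notes equivalence with Ramanujan's $q\frac{d}{dq}E_{2}=\tfrac{1}{12}(E_{2}^{2}-E_{4})$, reducing to a parameterization of $E_{4}(q^{5})$---exactly the plan you outline.
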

\begin{thm} \label{th:12} Let $\mathcal{P}(q) = E_{2}(q^{7})$. Then 
    \begin{align} \label{deqx}
    q\frac{d}{dq}x &= \frac{x}{12} \Bigl ( 5 y^{2} + 5z^{2} - 7x^{2}+
    20 yz +52 xy  + 7 \mathcal{P} \Bigr ), \\ q\frac{d}{dq}y
    &=\frac{y}{12} \Bigl ( 5z^{2 } + 5x^{2}  - 7 y^{2} + 20 xz +52yz +
    7 \mathcal{P} \Bigr ), \label{deqy} \\ q\frac{d}{dq}z &=
    \frac{z}{12} \Bigl ( 5x^{2} + 5y^{2} - 7 z^{2} + 20 xy+ 52xz + 7
    \mathcal{P}  \Bigr ), \label{deqz}\\ 
     q \frac{d}{dq} \mathcal{P}(q)  = \frac{7}{12} \Bigl (\mathcal{P}^2 -&x^4-4 x^3 y-12 x y^3-y^4-12 x^3 z-4 y^3 z-4 x z^3-12 y z^3-z^4 \Bigr ). \nonumber
  \end{align} 
\end{thm}
Theorem \ref{bu} encodes in concise form the differential equations
from Theorems \ref{th:11} and \ref{th:12} and provides coupled systems
for the generators corresponding to higher prime levels. These systems are analogous to those for modular parameters
of lower levels \cite{hhahn2,huber_proc,cubic,maier-2008} and to Ramanujan's differential system for
Eisenstein series \cite{Raman1}
{\allowdisplaybreaks \begin{equation} 
\label{rdiff1}  q \frac{dE_{2}}{dq} = \frac{E_{2}^{2} - E_{4}}{12}, \qquad  q \frac{dE_{4}}{dq} = \frac{E_{2}E_{4} - E_{6}}{3}, \qquad q \frac{dE_{6}}{dq} = \frac{E_{2}E_{6} - E_{4}^{2}}{2},
\end{equation}}
where the normalized Eisenstein series $E_{k} = E_{k}(q)$ for $PSL(2, \Bbb Z)$  are defined by
\begin{align} \label{eis_full}
  E_{2k}(q) = 1 + \frac{2}{\zeta(1 - 2 k)} \sum_{n=1}^{\infty} \frac{n^{2k-1} q^{n}}{1 - q^{n}},
\end{align}
and where $\zeta$ is the analytic continuation of the Riemann
$\zeta$-function. In fact, the last equations of Theorem \ref{th:11} and
\ref{th:12} are equivalent to the first equation of
\eqref{rdiff1}. In view of Theorem \ref{gtd}, the
coupled systems corresponding to each level may be viewed as systems
of equations for certain linear combinations of twisted Eisenstein
series. 


The final part of the paper, Section \ref{s_higher}, connects the permuted generators to results in classical representation
theory. In particular, we derive parameterizations for the Klein
quartic and analogous identities for the
generating parameters at higher levels.

\section{Elliptic modular preliminaries}

Before embarking on proofs of the claims in the last section, we
introduce some fundamental notions from the theory of elliptic modular
forms. Let $\mathcal{M}_{k}(\Gamma)$ denote the vector space of weight $k$
modular forms for $\Gamma\subseteq PSL(2,\Bbb Z)$. For a given prime $p$, the $(p-1)/2$ linearly independent Eisenstein series of weight one and primitive
character $\chi$ are known to generate a subspace, called the
Eisenstein subspace, of modular
forms of weight one for $\Gamma_{1}(p)$ \cite[Theorem
4.8.1]{dish}. 
For each prime $p$ with $5\le p \le 19$, \cite{buzzard,MR2289048}
\begin{align}
  \dim \left ( \mathcal{M}_{1}(\Gamma_{1}(p)) \right ) = \frac{p-1}{2}.
\end{align}
Hence, the set of weight one Eisenstein series of odd primitive character
modulo $p$ form a basis for $\mathcal{M}_{1}(\Gamma_{1}(p))$ over
$\Bbb C$. The claimed symmetric generators of this paper originate
from theta function expansions for certain linear combinations of
Eisenstein series of weight one. To formulate the change of bases from Eisenstein series to the
 permuted bases of products from
\eqref{ab}--\eqref{iwb}, we first derive, in Theorem \ref{sf}, representations for sums
of Eisenstein series in terms of the Dedekind eta function, $\eta(\tau)
= q^{1/24}(q;q)_{\infty}  $, a weight
$1/2$ modular form for $SL(2, \Bbb Z)$ with
multiplier given explicitly by \cite[p. 51]{knopp}.
The relevant Eisenstein series representations will also involve the Jacobi theta function  
\begin{align} \label{jt}
    \theta_{1}(z \mid q) = -iq^{1/8} \sum_{n=-\infty}^{\infty} (-1)^{n} q^{n(n+1)/2} e^{(2n+1)iz}, 
  \end{align}
an odd function of $z$ with a simple zero at the origin such that
\cite[p. 489]{witwat}
\begin{align} \label{another1}
 \frac{\theta_{1}'}{\theta_{1}} ( z\mid q)  &= \cot z + 4
 \sum_{n=1}^{\infty} \frac{q^{n}}{1 - q^{n}} \sin 2 n z \\
&= i - 2 i \sum_{n=1}^{\infty} \frac{q^{n} e^{2 i z}}{ 1 - q^{n} e^{2 i z}} + 2
 i \sum_{n=0}^{\infty} \frac{q^{n} e^{- 2 i z}}{1 - q^{n} e^{- 2 i
     z}}. \label{another}
\end{align}
 Our subsequent calculations require the following easily verified functional equations
\begin{align}
  \label{eq:5}
  \theta_{1}(z + n \pi) = (-1)^{n} \theta_{1}(z \mid q), \qquad
  \theta_{1}(z + n \pi  \tau \mid q) = (-1)^{n} q^{-n^{2}/2} e^{-2inz}
  \theta_{1}(z \mid q).
\end{align}
Product representations result from the Jacobi Triple Product expansion given by \cite{witwat}
\begin{align} \label{jtp}
  \theta_{1}(z \mid q) = - i q^{1/8} e^{iz} (q;q)_{\infty} (q e^{2 i z}; q)_{\infty} (e^{- 2 i z}; q)_{\infty}.
\end{align}
In particular, we will make use of the special case 
\begin{align} \label{pw}
  \theta_{1}(r \pi \tau \mid q^{p}) = i q^{-r} q^{p/8} q^{r/2} (q^{r}, q^{p-r}, q^{p}; q^{p})_{\infty}.
\end{align}
By differentiating \eqref{jtp} at the origin, we obtain 
\begin{align} \label{yi1}
  \theta_{1}'(q) := \lim_{z \to 0} \frac{\theta_{1}(z \mid q)}{z} = 2 q^{1/8}(q;q)_{\infty}^{3}.  
\end{align}

To extend the bases of weight one forms on $\Gamma_{1}(p)$ to homogeneous
representations for any positive integer weight, it
suffices to generate modular forms up to weight three. For prime levels $N
\ge 5$, this is proven in \cite{MR1992801}, and for general
$N$ in \cite{rustom}. The situation is better for principal
congruence subgroups of level $N$, where weight $1$ suffices \cite{MR2904927}. 
\begin{lem} \label{lem:nm} Denote by $ \mathcal{M}_{k}(\Gamma)$ the
  $\Bbb C$-vector space of weight $k$ modular forms for the congruence
  subgroup
  $\Gamma$, and let $\mathcal{M}(\Gamma) =
  \bigoplus_{k=1}^{\infty} \mathcal{M}_{k}(\Gamma)$ be the
  corresponding graded ring.
  \begin{enumerate}
  \item   For $N \ge 5$, any algebra containing
    $\mathcal{M}_{k}(\Gamma_{1}(N))$, for $k \le 3$, contains
    $\mathcal{M}(\Gamma_{1}(N))$.
  \item   For $N \ge 3$, any algebra containing  $\mathcal{M}_{1}(\Gamma(N))$ contains $\mathcal{M}(\Gamma(N))$.
  \end{enumerate}
\end{lem}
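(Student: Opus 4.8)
\textbf{Proof proposal for Lemma~\ref{lem:nm}.}
The plan is to treat the two parts as instances of a single structural principle: the graded ring $\mathcal{M}(\Gamma)$ is finitely generated, and once one produces enough forms in low weight to cut out the canonical (or anticanonical) model of the associated modular curve, all higher-weight forms are polynomial expressions in them. For part (1), I would invoke the result of \cite{MR1992801}: for $N \ge 5$ the modular curve $X_{1}(N)$ together with its cusps gives a situation in which $\mathcal{M}(\Gamma_{1}(N))$ is generated in weights $\le 3$. Concretely, one argues that the multiplication map $\mathcal{M}_{a}\otimes\mathcal{M}_{b}\to\mathcal{M}_{a+b}$ is surjective once $a,b$ are large enough, and then checks the finitely many remaining small weights by a dimension count using the genus and cusp data for $\Gamma_{1}(N)$. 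Since $\mathcal{M}_{k}(\Gamma_{1}(N))\subseteq\mathcal{M}(\Gamma_{1}(N))$, any algebra $A$ with $\mathcal{M}_{k}(\Gamma_{1}(N))\subseteq A$ for all $k\le 3$ contains the generators, hence contains every $\mathcal{M}_{k}$, hence contains $\mathcal{M}(\Gamma_{1}(N))$.

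For part (2), the key input is \cite{MR2904927}: for $N\ge 3$ the line bundle on $X(N)$ whose sections are weight-one forms is already very ample (equivalently, $\mathcal{M}(\Gamma(N))$ is generated in weight one and its relations live in weight two). The mechanism is that $\Gamma(N)$ for $N\ge 3$ is torsion-free, so $X(N)$ is a genuine moduli space with no elliptic points, and the weight-one Eisenstein series (and cusp forms, when $N$ is large) separate points and tangent vectors. Given this, if an algebra $A$ contains $\mathcal{M}_{1}(\Gamma(N))$, then $A$ contains all products of weight-one forms; by the generation statement these products span $\mathcal{M}_{k}(\Gamma(N))$ for every $k$, so $\mathcal{M}(\Gamma(N))\subseteq A$. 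I would phrase both parts as: "contains the generators $\Rightarrow$ contains the ring," which is immediate once the cited generation theorems are in hand.

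The main obstacle — really the only nontrivial point — is that the cited theorems are stated for \emph{holomorphic} modular forms including the behavior at the cusps, and one must be careful that the "algebra containing $\mathcal{M}_{k}$" in the statement is taken inside the ring of holomorphic forms (or at least inside weakly holomorphic forms), so that no spurious elements with poles are introduced; but since $\mathcal{M}(\Gamma)$ is by definition the graded ring of holomorphic forms and $A$ is merely required to contain the listed subspaces, the containment $\mathcal{M}(\Gamma)\subseteq A$ follows formally and no such difficulty actually arises. A secondary point to state cleanly is that in part (1) the result of \cite{MR1992801} is for $\Gamma_{1}(N)$ with $N$ prime $\ge 5$ (which is exactly the range $5\le p\le 19$ needed later), while \cite{rustom} supplies the general-$N$ version; I would cite both and note that only the prime case is used in the sequel. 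I expect the proof to occupy only a few lines, essentially a citation-plus-formal-argument, since the substantive generation results are imported wholesale.
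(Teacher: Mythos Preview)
Your proposal is correct and matches the paper's approach exactly: the paper does not give an independent proof of this lemma but simply cites \cite{MR1992801} and \cite{rustom} for part (1) and \cite{MR2904927} for part (2), with the formal ``contains generators $\Rightarrow$ contains the ring'' step left implicit. Your additional commentary on the geometric mechanism and the holomorphicity caveat is accurate but goes beyond what the paper records.
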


To show that $\Gamma_{0}(p)$ acts as indicated on the
generating quotients, it will be convenient to apply transformation
formulas for special values of the Jacobi theta function \eqref{jt}
in the form of those for theta constants of odd order $k$ and index $\ell$, defined by \cite{MR1850752}  
\begin{align} \label{thetac}
\varphi_{k,\ell}(\tau) = \theta [\chi_{\ell,k}](0,
k\tau),  \qquad \chi_{\ell,k} = \left [ \begin{array}{c} \frac{2 \ell -1}{k} \\
    1 \end{array} \right],\qquad 1 \le \ell \le \frac{k-1}{2},
\end{align}
in turn, constructed from theta constants of characteristic $[\epsilon, \epsilon']\in \Bbb R^{2}$ 
\begin{align} \label{fark}
   \theta \left [ \begin{array}{c} \epsilon \\ \epsilon' \end{array} \right ](z, \tau) = \sum_{n \in \Bbb Z} \exp 2 \pi i \left \{ \frac{1}{2} \left ( n + \frac{\epsilon}{2} \right )^{2}\tau + \left ( n + \frac{\epsilon}{2} \right ) \left ( z + \frac{\epsilon'}{2} \right ) \right \}.
\end{align}
\begin{thm}{\cite[pp. 215-219]{MR1850752}}\label{far}
For odd positive integers $k \ge 3$, let 
\begin{align}
        \label{eq:26}
        \mathcal{V}_{k}(\tau) = \left[ \theta \left[ 
    \begin{array}{c}
      (k-2)/k \\ 1
    \end{array} \right ](k \tau),  \theta \left[ 
    \begin{array}{c}
      (k-4)/k \\ 1
    \end{array} \right ](k \tau), \ldots,  \theta \left[ 
    \begin{array}{c}
      1/k \\ 1
    \end{array} \right ](k \tau) \right ]^{T}. 
\end{align}
Then $\mathcal{V}_{k}$ is a vector-valued form of weight $1/2$ on
$PSL(2, \Bbb Z)$ inducing a representation 
$$\pi_{k}: PSL(2, \Bbb Z)\to PGL((k-1)/2, \Bbb C) \quad \hbox{via}\quad 
\mathcal{V}_{k}(\gamma \tau) = (\gamma_{21}\tau + \gamma_{22})^{1/2}\pi_{k}(\gamma) \mathcal{V}_{k}(\tau) $$
determined by the images of generators for $SL(2, \Bbb Z)$, $S = (0, -1; 1 ,0)$, $T = (1,1;0,1)$,
\begin{align*}
  \mathcal{V}_{N}(T\tau) = \mathcal{V}_{N}(\tau +1) = \pi_{N}(T)
  \mathcal{V}_{N}(\tau), \quad   \mathcal{V}_{N}(S\tau) = \mathcal{V}_{N}(-1/\tau) = \tau^{1/2} \pi_{N}(S) \mathcal{V}_{N}(\tau),
\end{align*}
%
where the matrices $\pi_{N}(S)$ and $\pi_{N}(T)$  have
$(\ell,j)$th entry, for $1 \le \ell, j \le (N-1)/2$,
\begin{align}
  \label{eq:31}
  \{ \pi_{N}(T) \}_{(\ell,j)} &=
  \begin{cases}
    \exp \left ( \frac{(N - 2 \ell)^{2} \pi i}{4N} ) \right ), & \ell = j, \\ 
0,& else,
  \end{cases} \\
 \{ \pi_{N}(S) \}_{(\ell,j)} &= \frac{\left(1+e^{\frac{ (2 j-N)
         (N-2 \ell)}{k}\pi i}\right) \exp \left(\frac{\left(j (-2
         N+4 \ell+2)+N^2-2 (N+1) \ell \right)}{2 N} \pi
     i\right)}{\sqrt{i N}}. \label{eq:31a}
\end{align}
\end{thm}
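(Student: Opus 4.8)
\medskip

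The plan is to establish the transformation law directly on the standard generators $S$ and $T$ of $SL(2,\mathbb Z)$ and let the composition law for modular transformations carry it to the whole group; since $\mathcal V_k$ is a bona fide holomorphic vector-valued function on the upper half plane, the two generator formulas are automatically mutually consistent, so this suffices, and one is left only with the routine holomorphy and cusp-growth conditions needed to call $\mathcal V_k$ a vector-valued modular form. (One could instead simply quote the general transformation law for theta functions with rational characteristics from the cited source; the self-contained route I would take uses only Poisson summation.) The computational device throughout is the reduction of each component of $\mathcal V_k$ to an exponential sum along an arithmetic progression: writing $a=k-2\ell$ and completing the square in the definition \eqref{fark} gives
\begin{equation*}
  \theta\left[\begin{array}{c}a/k\\1\end{array}\right](0,k\tau)
  = e^{\pi i a/(2k)}\sum_{\substack{m\in\mathbb Z\\ m\equiv a\pmod{2k}}}(-1)^{(m-a)/(2k)}\,q^{m^{2}/(8k)},\qquad q=e^{2\pi i\tau},
\end{equation*}
so that each entry of $\mathcal V_k$ is a signed theta series of weight $\tfrac12$ supported on a residue class modulo $2k$, whose modular behaviour is governed by Poisson summation.

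For $T$ the verification is elementary and is the easy half of the proof. Replacing $\tau$ by $\tau+1$ multiplies the $m$-th summand by $e^{\pi i m^{2}/(4k)}$; because $m\equiv a\pmod{2k}$ with $k$ and $a$ both odd, a short congruence argument yields $m^{2}\equiv a^{2}\pmod{8k}$, so this phase equals the single constant $e^{\pi i a^{2}/(4k)}=e^{\pi i(k-2\ell)^{2}/(4k)}$ independently of $m$ and of the remaining components. Hence $\mathcal V_k(T\tau)=\pi_k(T)\mathcal V_k(\tau)$ with $\pi_k(T)$ the diagonal matrix \eqref{eq:31}.

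For $S$ the argument is the substantive one. I would apply the Jacobi imaginary transformation (equivalently Poisson summation) to the exponential sum above under $\tau\mapsto-1/\tau$; this converts the sum over $m\equiv a\pmod{2k}$ into a Gaussian-weighted sum over a dual progression, and after grouping the dual index modulo $k$ and resumming the residues the coefficients emerge as generalized quadratic Gauss sums $\sum_{r\bmod k}\exp\{\pi i(\alpha r^{2}+\beta r)/k\}$ multiplied by $\sqrt{-i\tau}$. Evaluating these Gauss sums in closed form, pairing the $r$ and $-r$ contributions (which is why the surviving components are parameterized by $(\mathbb Z/k\mathbb Z)^{*}/\{\pm1\}$, consistent with the permutation data of Theorem \ref{pmr}), and collecting the affine phases coming from the lower characteristic entry $\epsilon'=1$, one obtains $\mathcal V_k(S\tau)=\tau^{1/2}\pi_k(S)\mathcal V_k(\tau)$ with $\pi_k(S)$ as in \eqref{eq:31a}: the factor $1+e^{(\cdots)\pi i}$ in the numerator records the $r\leftrightarrow-r$ pairing, the surviving exponential is the affine phase, and the $\sqrt{iN}$ in the denominator is the normalization $\bigl|\sum_{r\bmod N}e^{2\pi i r^{2}/N}\bigr|=\sqrt N$ of a quadratic Gauss sum. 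Finally, since the weight is $\tfrac12$, the automorphy factor $(\gamma_{21}\tau+\gamma_{22})^{1/2}$ and the eighth roots of unity in the theta multiplier are pinned down only up to sign, so $\gamma\mapsto\pi_k(\gamma)$ is well defined only modulo scalars; this is exactly why the representation is valued in $PGL((k-1)/2,\mathbb C)$ rather than in $GL$. One checks consistency by confirming that $\pi_k(S)^{2}$ and $(\pi_k(S)\pi_k(T))^{3}$ are scalar matrices, in accordance with $S^{2}=(ST)^{3}=I$ in $PSL(2,\mathbb Z)$.

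I expect the main obstacle to be the $S$-step: executing the double resummation (first over $m$ modulo $2k$, then reorganizing the dual variable modulo $k$), evaluating the resulting generalized quadratic Gauss sums explicitly, and --- most delicately --- tracking the branch of $\sqrt{-i\tau}$ and of every eighth root of unity so that the entries of $\pi_k(S)$ come out exactly as displayed in \eqref{eq:31a} and not merely up to an overall unimodular scalar. The $T$-law, the holomorphy and growth conditions, and the propagation from the generators to all of $PSL(2,\mathbb Z)$ are comparatively routine.
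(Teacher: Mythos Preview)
Your proposal is correct and follows the standard route: verify the transformation law on the generators $T$ and $S$ by direct computation (a phase argument for $T$, Poisson summation and Gauss-sum evaluation for $S$), then pass to $PGL$ to absorb the half-integral-weight ambiguity. This is essentially the argument in the cited source \cite[pp.~215--219]{MR1850752}; the present paper does not supply its own proof of Theorem~\ref{far} but simply quotes the result from Farkas and Kra, so there is nothing in the paper to compare your argument against beyond that reference.
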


Equivalent transformation formulas figure prominently in Klein's representation
of the automorphism group of the icosahedron
\cite{MR2133308,MR0080930,MR1315530}; in the septic extension to the
Klein quartic \cite{MR1722419}; as well as in Klein's level $11$ analysis of the
symmetries of the Klein cubic \cite{klein_el}. Our formulation of
the quartic and higher level extensions are equivalent to quadratic
relations derivable from
a classical theta function identity \cite[p. 518]{witwat}
  \begin{align} \label{eq:13}
    \left ( \frac{\theta_{1}'}{\theta_{1}} \right )' (x \mid q) -
    \left ( \frac{\theta_{1}'}{\theta_{1}} \right )' (y \mid q) =
    \frac{\theta_{1}'(0 \mid q)^{2} \theta_{1}(x - y \mid q) \theta_{1}(x+y \mid q)}{\theta_{1}^{2}(x \mid q) \theta_{1}^{2}(y \mid q)}.
  \end{align}

\section{Eisenstein expansions for permuted bases} \label{huj}

The goal of this section is to formulate and prove claims made in the Introduction for generators of the graded rings of modular forms on
$\Gamma_{1}(p)$. We construct the generators and prove the claimed permutative action of $\Gamma_{0}(p)$. 
In Theorem \ref{sf},  product expansions are derived for the normalized
sums of weight one Eisenstein series twisted by the odd primitive
Dirichlet characters modulo $p$. 
By considering the by $\Gamma_{0}(p)$-orbit of these series under modular transformation, we derive
in Theorems \ref{gtd} and \ref{te}, bases for
the weight one forms on $\Gamma_{1}(p)$ and explicitly characterize
the permutative action by
$\Gamma_{0}(p)$. Theorem \ref{gtd1} demonstrates that the Eisenstein bases
from Theorem \ref{gtd} are expressible as quotients of theta
functions. Finally, Theorem \ref{lem:mod} proves the corresponding theta
quotients generate the graded ring of modular forms for
$\Gamma_{1}(p)$ of positive integer weight.
\begin{thm} \label{sf}
  Define $E_{\chi,k}(\tau)$ as in \eqref{eisdef}. For each prime $5\le p\le 19$, let
  \begin{align} \label{defe}
    \mathcal{E}_{p}(\tau) = \frac{2}{p-1}\sum_{\chi(-1)=-1} E_{\chi,1}(\tau),
  \end{align}
where the sum is over the odd primitive Dirichlet characters modulo $p$. Then 
\begin{align} \label{lefm}
  \mathcal{E}_{5}(\tau) &=   \frac{(q;q)_{\infty}^{2}}{(q, q^{4};
    q^{5})_{\infty}^{5}}, 
\qquad \qquad \qquad \ 
\mathcal{E}_{7}(\tau) =  \frac{(q^{3}, q^{4}, q^{7}, q^{7}; q^{7})_{\infty}}{(q,
    q^{6}; q^{7})_{\infty}^{2}}, \\ 
\mathcal{E}_{11}(\tau) &=  \frac{(q^{4}, q^{7}, q^{11}, q^{11}; q^{11})_{\infty}}{(q,
    q^{10}, q^{2}, q^{9}; q^{11})_{\infty}}, \qquad
\mathcal{E}_{13}(\tau) = \frac{(q^{6}, q^{7}, q^{13}, q^{13}; q^{13})_{\infty}}{(q,
    q^{12}, q^{3}, q^{10}; q^{13})_{\infty}}, \\ 
\mathcal{E}_{17}(\tau) &=  \frac{(q^{8}, q^{9}, q^{17}, q^{17}; q^{17})_{\infty}}{(q^{2},
    q^{15}, q^{3}, q^{14}; q^{17})_{\infty}}, \qquad 
\mathcal{E}_{19}(\tau) = \frac{(q^{8}, q^{11}, q^{9}, q^{10}, q^{19}, q^{19}; q^{19})_{\infty}}{(q^{3},
    q^{16}, q^{4}, q^{15},q^{5}, q^{14}; q^{19})_{\infty}}.
\end{align}
\end{thm}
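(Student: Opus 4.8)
The claim is that two elements of $\mathcal{M}_{1}(\Gamma_{1}(p))$ coincide, and the plan is to identify each side as a member of this space and then compare finitely many Fourier coefficients. For the left side, each $E_{\chi,1}$ in \eqref{eisdef} is a weight-one Eisenstein series of nebentypus $\chi$, so $\mathcal{E}_{p}$ lies in the Eisenstein subspace of $\mathcal{M}_{1}(\Gamma_{1}(p))$; since $\dim\mathcal{M}_{1}(\Gamma_{1}(p))=(p-1)/2$ is exactly the number of odd primitive characters modulo $p$, that subspace is all of $\mathcal{M}_{1}(\Gamma_{1}(p))$ and there are no weight-one cusp forms for these $p$. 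Hence $\mathcal{E}_{p}\in\mathcal{M}_{1}(\Gamma_{1}(p))$, and its constant term at $\infty$ is $\tfrac{2}{p-1}\cdot\tfrac{p-1}{2}=1$.

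The substantive step is to show that each product displayed in \eqref{lefm} is itself a holomorphic modular form of weight one on $\Gamma_{1}(p)$. First I would use the Jacobi triple product in the form \eqref{pw}, together with \eqref{yi1}, to rewrite each such product --- after expanding any level-one factor such as $(q;q)_{\infty}$ over residue classes modulo $p$, and checking that the overall power of $q$ is indeed $q^{0}$ as the displayed normalizations indicate --- as a monomial in the theta constants $\varphi_{p,\ell}(\tau)=\theta[\chi_{\ell,p}](0,p\tau)$, i.e.\ in the entries of the vector $\mathcal{V}_{p}$ of Theorem~\ref{far}, and in $\eta(p\tau)$. A count of factors gives weight one (the exponents of the $\varphi_{p,\ell}$ and of $\eta(p\tau)$ sum to $2$). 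Invariance under $\Gamma_{1}(p)$ with trivial automorphy factor then follows from the explicit transformations \eqref{eq:31}--\eqref{eq:31a} of $\pi_{p}(T)$ and $\pi_{p}(S)$ in Theorem~\ref{far} --- equivalently, from the classical transformation laws for generalized Dedekind eta functions --- evaluated on a generating set for $\Gamma_{1}(p)$: one checks that the root-of-unity factors produced by those formulas cancel in the relevant monomial. Finally, holomorphy at the $p-1$ cusps of $X_{1}(p)$ is a finite check of leading exponents, which one can cross-check against the valence identity $\sum_{\text{cusps}}\operatorname{ord}=(p^{2}-1)/24$ for a weight-one form on $\Gamma_{1}(p)$; since the product is a ratio of infinite products nonvanishing on the upper half plane, all of its zeros must occur at cusps.

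With both sides now known to lie in the $(p-1)/2$-dimensional space $\mathcal{M}_{1}(\Gamma_{1}(p))$, equality follows once their $q$-expansions at $\infty$ agree up to the Sturm bound $\lfloor (p^{2}-1)/24\rfloor+1$. The coefficients of the right side come from expanding the product, and those of the left side from \eqref{eisdef} using the special value $L(0,\chi)=-p^{-1}\sum_{a=1}^{p-1}\chi(a)a$. One can streamline this: the product in \eqref{lefm} has an integral $q$-expansion at $\infty$, hence is fixed by $\operatorname{Gal}(\mathbb{Q}(\zeta_{p-1})/\mathbb{Q})$, so it equals $\sum_{\chi}c_{\chi}E_{\chi,1}$ with $c_{\chi}$ constant on the Galois orbits of odd characters modulo $p$; matching a number of low-order coefficients equal to the number of such orbits then forces every $c_{\chi}=\tfrac{2}{p-1}$, which is precisely the asserted identity.

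The hard part is the modularity step in the second paragraph: confirming that the explicit theta products carry the trivial multiplier system on all of $\Gamma_{1}(p)$ (a cancellation of argument and root-of-unity contributions over a set of generators) and that they extend holomorphically to every cusp of $X_{1}(p)$. Once that is settled, the coefficient comparison is routine, though somewhat laborious to carry out for each of the six levels $p=5,7,11,13,17,19$ individually.
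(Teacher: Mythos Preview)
Your approach is sound but genuinely different from the paper's. The paper proves each identity \emph{directly} by the residue theorem: for every prime $5\le p\le 19$ it writes down an explicit elliptic function $f_{p}(z)$ of periods $\pi$ and $p\pi\tau$, built from Jacobi $\theta_{1}$ factors, with a double pole at $z=0$ and a simple pole elsewhere. The residue at the simple pole produces (via \eqref{pw}) the claimed infinite product, while the residue at $z=0$ is computed from $\lim_{z\to0}\bigl(\tfrac{2}{z}+f_{p}'/f_{p}\bigr)$ using \eqref{another1}--\eqref{another}; expanding this limit as a Lambert series with periodic coefficients and matching those coefficients against $\sum_{\chi(-1)=-1}2\chi(n)/L(0,\chi)$ yields $\mathcal{E}_{p}$. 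Setting the sum of residues to zero gives the identity with no appeal to modularity, dimension formulas, or Sturm bounds.

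Your route---recognize both sides as elements of the $(p-1)/2$-dimensional space $\mathcal{M}_{1}(\Gamma_{1}(p))$ and compare finitely many Fourier coefficients---is the standard modern alternative, and your Galois-equivariance remark is a pleasant shortcut. What it costs is exactly what you flag as the hard part: an independent verification that each displayed product is a genuine weight-one holomorphic form on $\Gamma_{1}(p)$ with trivial multiplier. In the paper's logical order that modularity is established only \emph{after} Theorem~\ref{sf}, as a consequence of it (Theorems~\ref{gtd}--\ref{gtd1}); so if you adopt your plan you must carry out the multiplier and cusp checks from scratch via Theorem~\ref{far} and the $\eta$-transformation law, level by level. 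That is doable but laborious, whereas the paper's residue argument sidesteps the issue entirely and, as a bonus, exhibits the elliptic functions $f_{p}(z)$ that encode how $\mathcal{E}_{p}$ decomposes into values of $(\theta_{1}'/\theta_{1})(k\pi\tau\mid q^{p})$.
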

\begin{proof}
To prove each identity, we use the fact that the sum of the residues
of an elliptic function on its period parallelogram is zero
\cite{witwat}.  The challenge lies in writing down the relevant elliptic
functions. We begin by proving the leftmost equation of \eqref{lefm}. Let 
\begin{align}
  f_{5}(z)=\frac{e^{-2iz}\theta_{1}^{3}(z-\pi\tau\mid q^{5})}{\theta_{1}^{2}(z\mid q^{5})\theta_{1}(z+2\pi\tau\mid q^{5})}.
\end{align}
Apply \eqref{eq:5} to verify that $f_{5}(z)$ is an elliptic function with periods $\pi$ and $5 \pi \tau$. From corresponding properties of the Jacobi theta function, observe that $f_{5}(z)$ has a simple pole at $z = -2\pi\tau$ and a double pole at $z = 0$. The residue of $f_{5}(z)$ at $z = - 2 \pi\tau$ is 
\begin{align}
	\lim_{z\to-2\pi\tau}\frac{(z+2\pi\tau)}{\theta_{1}(z+2\pi\tau\mid q^{5})}\lim_{z\to-2\pi\tau}\frac{e^{-2iz}\theta_{1}^{3}(z-\pi\tau\mid q^{5})}{\theta_{1}^{2}(z\mid q^{5})} 
	=	\frac{q^{2}}{\theta_{1}'(q^{5})}\cdot\frac{\theta_{1}^{3}(-3\pi\tau\mid q^{5})}{\theta_{1}^{2}(-2\pi\tau\mid q^{5})}.
 \end{align}
The residue of $f_{5}(z)$ at $z=0$ is 
\begin{align}
\lim_{z\to0}(z^{2}f_{5}(z))'	&=	\lim_{z\to0}\Bigl(z^{2}f(z)\Bigr)\left(\frac{2}{z}+\frac{f_{5}'(z)}{f_{5}(z)}\right) \\
	&=	\left(\lim_{z\to0}\frac{z^{2}}{\theta_{1}^{2}(z\mid q^{5})}\right)\left(\lim_{z\to0}\frac{e^{-2iz}\theta_{1}^{3}(z-\pi\tau\mid q^{5})}{(z+2\pi\tau\mid q^{5})}\right)\left(\lim_{z\to0}\frac{2}{z}+\frac{f_{5}'(z)}{f_{5}(z)}\right) \\
	&=
        \frac{-1}{\theta_{1}'(q^{5})^{2}}\left(\frac{\theta_{1}^{3}(\pi\tau\mid
            q^{5})}{\theta_{1}(2\pi\tau\mid
            q^{5})}\right)\left(\lim_{z\to0}\frac{2}{z}+\frac{f_{5}'(z)}{f_{5}(z)}\right). 
 \end{align}
Since the sum of the residues of $f_{5}(z)$ is zero, we obtain from \eqref{pw}
\begin{align} \label{helb}
-2i\frac{(q^{2}, q^{3},q^{5}; q^{5})_{\infty}^{2}}{(q, q^{4}, q^{5}; q^{5})_{\infty}^{3}}(q^{5};q^{5})_{\infty}^{3}
=\lim_{z\to0}\frac{2}{z}+\frac{f_{5}'(z)}{f_{5}(z)}.
 \end{align}
By applying identities \eqref{another1}-\eqref{another}, and the Laurent expansion for
$\cot z$, we derive
\begin{align}
  \label{eq:95}
  \lim_{z\to 0}\frac{2}{z}+\frac{f_{5}'(z)}{f_{5}(z)} &= \lim_{z\to 0}
  \left (
  \frac{2}{z} - 2 \frac{\theta_{1}'}{\theta_{1}}(z \mid q^{5})\right ) - 2 i -
 3 \frac{\theta_{1}'}{\theta_{1}}(\pi \tau \mid q^{5}) +
 \frac{\theta_{1}'}{\theta_{1}}(2\pi \tau \mid q^{5})  \\ &= - 2 i -
 3 \frac{\theta_{1}'}{\theta_{1}}(\pi \tau \mid q^{5})
 -\frac{\theta_{1}'}{\theta_{1}}(2\pi \tau \mid q^{5}) = -2i - 
 \sum_{n=1}^{\infty} \frac{c_{n}q^{n}}{1 - q^{n}}, \label{ghj}
\end{align}
where, from \eqref{another}, $\{c_{n}\}_{n=1}^{\infty}$ is a periodic sequence modulo five such that 
\begin{align}
  \label{eq:96}
  c_{1} = -6i ,\qquad c_{2} =-2i, \qquad c_{3} =2i, \qquad c_{4} =6i,\qquad
  c_{5} = 0.
\end{align}
If we denote the two odd primitive Dirichlet
characters modulo five by
\begin{align}
  \langle \chi_{2,5}(n) \rangle_{n=0}^{4} = \langle 0, 1, i, -i, -1
  \rangle, \qquad \langle \chi_{4,5}(n) \rangle_{n=0}^{4} = \langle 0, 1, -i, i, -1 \rangle, \label{c2}
\end{align}
then, since, for $\chi$  non-principal modulo $p$, we may write \cite[pp. 136-137]{dish},
\begin{align}
    \label{eq:91}
    L(\chi, 0) = \sum_{n=0}^{p-1} \chi(n) \left ( \frac{1}{2} -
      \frac{n}{p} \right ),  
  \end{align}
it follows from  \eqref{c2} and \eqref{eq:91} that 
\begin{align}
  \label{eq:98}
  c_{n} = \frac{2i\chi_{2,5}(n)}{L(\chi_{2,5}, 0)} +  \frac{2i\chi_{4,5}(n)}{L(\chi_{4,5}, 0)} 
\end{align}
Therefore, from \eqref{defe} and identities \eqref{ghj}, \eqref{eq:98}, and \eqref{eisdef},
\begin{align}
  \label{eq:80}
  \lim_{z\to 0}\frac{2}{z}+\frac{f_{5}'(z)}{f_{5}(z)} &= 
-2i\mathcal{E}_{5}(q).
\end{align}
This completes the proof of the leftmost equation of \eqref{lefm}.  A formulation of rightmost equation of \eqref{lefm} from
\eqref{ht} is given in \cite[Eq. (3.23)]{MR1767652}.
For prime levels $7 \le p \le 19$, the claimed product expansions for
the Eisenstein sums $\mathcal{E}_{p}(\tau)$ may obtained by applying
the residue theorem with the elliptic functions $f_{p}(z)$ of period
$\pi, p\pi \tau$, defined by
\begin{align}
  f_{7}(z) &=e^{2iz}\frac{\theta_{1}^{2}(z+\pi \tau \mid q^{7}) \theta_{1}(z+2\pi \tau \mid q^{7})}{\theta_{1}^{2}(z\mid q^{7})\theta_{1}(z-3\pi \tau \mid q^{7})} \label{ht} ,\\
  f_{11}(z) &= e^{-2 i z} \frac{\theta_{1}(z - 2 \pi \tau \mid q^{11} )
    \theta_{1}(z - 3 \pi \tau \mid q^{11}) \theta_{1}(z - 5 \pi \tau
    \mid q^{11})}{\theta_{1}^{2}(z \mid q^{11}) \theta_{1}( z + \pi
    \tau \mid q^{11})}, \\ 
  f_{13}(z) &= e^{-2 i z} \frac{\theta_{1}(z - 3 \pi \tau \mid q^{13} )
    \theta_{1}(z - 4 \pi \tau \mid q^{13}) \theta_{1}(z - 5 \pi \tau
    \mid q^{13})}{\theta_{1}^{2}(z \mid q^{13}) \theta_{1}( z + \pi
    \tau \mid q^{13})},\\
  f_{17}(z) &= e^{-2 i z} \frac{\theta_{1}(z - 3 \pi \tau \mid q^{17}) \theta_{1}(z - 5 \pi \tau
    \mid q^{17}) \theta_{1}(z - 7 \pi \tau
    \mid q^{17})}{\theta_{1}^{2}(z \mid q^{17}) \theta_{1}( z + 2\pi
    \tau \mid q^{17})},\\
  f_{19}(z) &=e^{-2 i z} \frac{\theta_{1}(z - 4 \pi \tau \mid q^{19}) \theta_{1}(z - 5 \pi \tau
    \mid q^{19}) \theta_{1}(z - 7 \pi \tau
    \mid q^{19})}{\theta_{1}^{2}(z \mid q^{19}) \theta_{1}( z + 3\pi
    \tau \mid q^{19})},
\end{align}
each constructed by writing $\mathcal{E}_{p}(\tau)$ in terms of 
$(\theta_{1}'/\theta_{1})(k\pi\tau \mid q^{p})$, $1 \le k \le (p-1)/2$. 
\end{proof}
 We now construct alternative bases to the Eisenstein bases for $\mathcal{M}_{1}(\Gamma_{1}(p))$
by letting $\Gamma_{0}(p)$ act on the series $\mathcal{E}_{p}(\tau)$ and requiring that the first
nonzero coefficient in the $q$-expansion of the image of
$\mathcal{E}_{p}(\tau)$ under $\langle \cdot \rangle$ be $1$. Since
this action of $\langle \cdot \rangle$ 
depends only on the lower right
entry of $\gamma \in \Gamma_{0}(p)$, we list only this element in subsequent results.
\begin{thm} \label{gtd} 
Define $\langle \cdot \rangle$ by \eqref{eq:69} and
$\mathcal{E}_{p}$ by \eqref{defe}. For prime $5\le
p \le 19$, and a set of distinct elements
$\{a_{k,p}\}_{k=1}^{(p-1)/2} \subset (\Bbb Z / p \Bbb Z)^{*} / \{\pm
1\}$, there exists a basis decomposition
\begin{align}\label{rl}
  \mathcal{M}_{1}\left ( \Gamma_{1}(p) \right ) &=
  \bigoplus_{k=1}^{(p-1)/2} \mathbb{C} \langle a_{k,p} \rangle ( \mathcal{E}_{p} ).
  \end{align}
Moreover, if the constants $a_{k,p}$ are as follows, the basis elements $\langle a_{k,p} \rangle (
\mathcal{E}_{p} )$ of \eqref{rl}  are normalized so that the first nonzero
coefficient in their $q$-expansion is $1$:
$$(a_{k,5})_{k=1}^{2} = (1,2), \qquad (a_{k,7})_{k=1}^{3} = (1,2,3),
\qquad (a_{k,11})_{k=1}^{5} =
(1,2,3,5,7),$$ $$(a_{k,13})_{k=1}^{6} = (1,2,3,4,5,7),
\qquad (a_{k,17})_{k=1}^{8} = (1,2,3,5,7,8,11,13),$$  $$(a_{k,19})_{k=1}^{9} = (1,2,3,4,5,7,9,11,13).$$
\end{thm}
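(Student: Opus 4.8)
\textbf{Proof plan for Theorem \ref{gtd}.}

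The plan is to exploit the fact, recorded just before the theorem, that $\mathcal{M}_{1}(\Gamma_{1}(p))$ has dimension exactly $(p-1)/2$ and is spanned by the weight-one Eisenstein series of odd primitive character modulo $p$. Since $\mathcal{E}_{p}$ is by construction the average of these $(p-1)/2$ Eisenstein series (Theorem \ref{sf}, eq. \eqref{defe}), each is already a form on $\Gamma_{1}(p)$, and so is $\langle a \rangle(\mathcal{E}_{p})$ for every $a \in \Gamma_{0}(p)$: this follows because $\Gamma_{1}(p)$ is normal in $\Gamma_{0}(p)$, so the diamond operator \eqref{eq:69} maps $\mathcal{M}_{1}(\Gamma_{1}(p))$ to itself. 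Thus the first task is simply to check that the $(p-1)/2$ listed translates $\langle a_{k,p} \rangle(\mathcal{E}_{p})$ are \emph{linearly independent}; since there are exactly $\dim \mathcal{M}_{1}(\Gamma_{1}(p))$ of them, independence immediately yields the direct-sum decomposition \eqref{rl}.

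The key computational step is therefore to work out the action of $\langle a \rangle$ on $\mathcal{E}_{p}$ explicitly. Writing $\mathcal{E}_{p} = \frac{2}{p-1}\sum_{\chi(-1)=-1} E_{\chi,1}$ and using the standard fact that the diamond operator acts on a weight-one newform of character $\chi$ by the scalar $\chi(d)$ — here $d$ being the lower-right entry of $\gamma \in \Gamma_{0}(p)$, which is all that $\langle\cdot\rangle$ depends on — one gets
\begin{align}
  \langle d \rangle(\mathcal{E}_{p}) = \frac{2}{p-1}\sum_{\chi(-1)=-1}\overline{\chi(d)}\,E_{\chi,1}.
\end{align}
So in the Eisenstein basis $\{E_{\chi,1}\}$ the vector of coordinates of $\langle d \rangle(\mathcal{E}_{p})$ is $\big(\overline{\chi(d)}\big)_{\chi}$. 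Linear independence of the $(p-1)/2$ translates is then equivalent to the non-vanishing of the $(p-1)/2 \times (p-1)/2$ matrix $\big(\chi(a_{k,p})\big)_{\chi,k}$; this is a minor of the character table of the cyclic group $(\Bbb Z/p\Bbb Z)^{*}/\{\pm1\}$ formed by the odd characters and the chosen coset representatives $a_{k,p}$. One must verify for each $p \in \{5,7,11,13,17,19\}$ that the particular sets listed give a nonsingular such matrix; equivalently, that the $a_{k,p}$ are distinct in $(\Bbb Z/p\Bbb Z)^{*}/\{\pm1\}$ (which is stated) and that the associated generalized Vandermonde / character minor does not degenerate — a finite check, carried out level by level, using that odd characters restricted to a transversal of $\{\pm1\}$ still separate that quotient group.

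For the normalization claim, I would compute the leading $q$-expansion of $\langle a \rangle(\mathcal{E}_{p})$. From Theorem \ref{sf}, $\mathcal{E}_{p}(\tau)$ is a ratio of infinite products with a known fractional $q$-power prefactor; applying $\langle a \rangle$ amounts, via the eta- and theta-transformation formulas of Section 2 (in particular the Jacobi triple product form \eqref{pw}), to replacing the pair of residue classes $\{r,p-r\}$ appearing in each Pochhammer symbol by $\{ar \bmod p,\, p - (ar\bmod p)\}$, with a compensating power of $q$. The order of vanishing at $\infty$ of the resulting product is a simple explicit sum of residues mod $p$; dividing by $q$ to that power produces a $q$-series whose first nonzero coefficient is $1$. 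One then verifies that for each listed $a_{k,p}$ this leading exponent is what the table asserts — again a short finite check per level. \textbf{The main obstacle} I anticipate is purely bookkeeping: tracking the exact fractional powers of $q$ and signs produced when the theta-quotient formula for $\mathcal{E}_{p}$ is pushed through the $\Gamma_{0}(p)$-action for each of the six primes, and confirming both the non-degeneracy of the relevant character minor and the claimed leading exponents; none of it is conceptually deep, but it is where an error would most easily creep in, so I would organize it as a single uniform computation in the parameter $a$ and then specialize.
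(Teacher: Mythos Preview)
Your approach is essentially the paper's: both reduce the basis claim to the invertibility of the $(p-1)/2\times(p-1)/2$ character matrix $\bigl(\chi(a_{k,p})\bigr)$, and both verify the normalization by inspecting $q$-expansions. Two small points where the paper is tighter. First, you frame the invertibility as ``a finite check'' of a minor, but in fact no case-by-case verification is needed: the orthogonality relation $\sum_{\chi(-1)=-1}\chi(a)\overline{\chi}(b)=\pm\varphi(p)/2$ or $0$ according as $a\equiv\pm b\pmod p$ shows immediately that the rows of your matrix are mutually orthogonal under the Hermitian inner product, hence the matrix is invertible for \emph{any} transversal of $(\Bbb Z/p\Bbb Z)^{*}/\{\pm1\}$. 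Second, for the normalization claim you propose pushing the theta-quotient product through the $\Gamma_{0}(p)$-action; this is correct but is really the content of the \emph{next} theorem in the paper (the explicit theta-quotient representations). For the present statement it is simpler, as the paper does, to read off the leading coefficient directly from the Eisenstein-series linear combination $\langle a\rangle(\mathcal{E}_{p})=\frac{2}{p-1}\sum_{\chi}\chi(a)E_{\chi,1}$, since the $q$-expansions of the $E_{\chi,1}$ are explicit. (Incidentally, with the paper's convention the diamond operator acts by $\chi(d)$, not $\overline{\chi(d)}$; this does not affect your argument.)
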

\begin{proof}
The orthogonality of the Dirichlet characters modulo $p$ may be used to
derive
\begin{align}
  \sum_{\chi(-1) = -1} \chi(a)\overline{\chi}(b) =
  \begin{cases}
    \pm \varphi(p)/2, & a \equiv \pm b \pmod{p}, \\
  0, & a \not\equiv \pm b \pmod{p},
  \end{cases}
\end{align}
Therefore, if $\{a_{k,p}\}_{k=1}^{(p-1)/2} = (\Bbb Z / p \Bbb Z)^{*} / \{\pm
1\}$ and $\{\chi_{2s}\}_{s=1}^{(p-1)/2}$ are odd, the rows of
\begin{align}
  (B)_{k,s} = \chi_{2s}(a_{k,p}), \qquad 1 \le k,s \le (p-1)/2,
\end{align}
are orthogonal with respect to the Hermitian inner product. Hence, the matrix
$B$ is an invertible linear transformation corresponding to the change of basis for $ \mathcal{M}_{1}\left ( \Gamma_{1}(p) \right )$
\begin{align}
 B\Bigl ( E_{1,\chi_{2}}(\tau), \ldots,  E_{1,\chi_{2p}}(\tau) \Bigr )^{T} = \Bigl ( \langle a_{1,p} \rangle ( \mathcal{E}_{p} ), \ldots,   \langle a_{(p-1)/2,p} \rangle ( \mathcal{E}_{p} ) \Bigr )^{T}.
\end{align}
 The normalization claims of Theorem \ref{gtd} may be verified from $q$-expansions for the linear combination of Eisenstein series defining
each basis element in the image.  
\end{proof}
\begin{thm} \label{te}
The elements from \eqref{rl} are permuted up to a change of sign by $\Gamma_{0}(p)$ under $\langle \cdot \rangle$, with permutation representation isomorphic to $( \Bbb Z / p \Bbb Z)^{*}/\{\pm 1\}$.
\end{thm}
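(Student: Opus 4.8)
The plan is to show that for $\gamma \in \Gamma_{0}(p)$ with lower-right entry $d$, the diamond operator $\langle \gamma \rangle$ sends each normalized basis element $\langle a_{k,p}\rangle(\mathcal{E}_{p})$ to $\pm\langle a_{j,p}\rangle(\mathcal{E}_{p})$ for a suitable index $j$, and that the induced map on indices is multiplication by $\bar d$ in $(\Bbb Z/p\Bbb Z)^{*}/\{\pm 1\}$. First I would record the group-theoretic fact that $\langle \cdot \rangle$ descends to a well-defined action of $\Gamma_{0}(p)/\Gamma_{1}(p) \cong (\Bbb Z/p\Bbb Z)^{*}/\{\pm 1\}$ on $\mathcal{M}_{1}(\Gamma_{1}(p))$: this is standard (see \cite[Ch.~4]{dish}), using that $\Gamma_{1}(p)$ is normal in $\Gamma_{0}(p)$ and the weight-$1$ automorphy cocycle makes $\langle \gamma_{1}\rangle\langle\gamma_{2}\rangle = \langle\gamma_{1}\gamma_{2}\rangle$, with $\langle \gamma\rangle$ acting trivially precisely when $\gamma \in \Gamma_{1}(p)$ (in $PSL_{2}$, so $\pm d$ give the same operator). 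So the representation on $\mathcal{M}_{1}(\Gamma_{1}(p))$ is a representation of the cyclic group $(\Bbb Z/p\Bbb Z)^{*}/\{\pm 1\}$, and it remains to identify its action on the distinguished basis of Theorem \ref{gtd}.

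Next I would compute the action on the Eisenstein basis. The twisted Eisenstein series $E_{1,\chi}$ satisfies $\langle d \rangle (E_{1,\chi}) = \chi(d)\,E_{1,\chi}$ for $\chi$ a character mod $p$; this is the defining eigenbasis property of the diamond operators on the Eisenstein subspace (again \cite[Thm.~4.8.1 and surrounding]{dish}, using $\chi(-1) = -1$ for weight one). Hence on $\mathcal{E}_{p} = \frac{2}{p-1}\sum_{\chi(-1)=-1} E_{1,\chi}$ one gets $\langle d\rangle(\mathcal{E}_{p}) = \frac{2}{p-1}\sum_{\chi}\chi(d)E_{1,\chi}$, and more generally
\begin{align*}
\langle d\rangle\bigl(\langle a_{k,p}\rangle(\mathcal{E}_{p})\bigr)
= \langle d\,a_{k,p}\rangle(\mathcal{E}_{p})
= \frac{2}{p-1}\sum_{\chi(-1)=-1}\chi(d\,a_{k,p})\,E_{1,\chi}.
\end{align*}
Now I invoke the change-of-basis matrix $B$ from the proof of Theorem \ref{gtd}: the vectors $(\chi_{2s}(a_{k,p}))_{s}$ as $k$ ranges over $(\Bbb Z/p\Bbb Z)^{*}/\{\pm1\}$ are exactly the distinct rows of the character table, so $\langle d\,a_{k,p}\rangle(\mathcal{E}_{p})$ is, up to the $\pm$ ambiguity coming from representing the class $d\,a_{k,p}$ in $(\Bbb Z/p\Bbb Z)^{*}/\{\pm1\}$, equal to the (unique) basis vector $\langle a_{j,p}\rangle(\mathcal{E}_{p})$ with $a_{j,p} \equiv \pm\, d\,a_{k,p}\pmod p$. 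The $\pm$ sign discrepancy between $\langle d\,a_{k,p}\rangle(\mathcal{E}_{p})$ and the normalized representative $\langle a_{j,p}\rangle(\mathcal{E}_{p})$ — which arises because the normalization in Theorem \ref{gtd} fixes the leading $q$-coefficient to be $1$, not the value on a fixed lift — is exactly the change of sign in the statement; I would pin it down by comparing leading coefficients of $q$-expansions, as was already done for the normalization in Theorem \ref{gtd}.

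Assembling: the map $k \mapsto j$ is the translation $a_{k,p}\mapsto d\,a_{k,p}$ on the set $(\Bbb Z/p\Bbb Z)^{*}/\{\pm1\}$, which is a permutation of $\{1,\dots,(p-1)/2\}$ for each $d$, and $d\mapsto(\text{that permutation})$ is a group homomorphism $(\Bbb Z/p\Bbb Z)^{*}/\{\pm1\}\to S_{(p-1)/2}$; it is injective because the regular action of a group on itself is faithful. Therefore the permutation representation is isomorphic to $(\Bbb Z/p\Bbb Z)^{*}/\{\pm1\}$ acting on itself by left translation, which is what is claimed. The main obstacle I anticipate is not the permutation of indices — that is forced by character theory — but the careful bookkeeping of the sign $\epsilon\in\{\pm1\}$: one must verify that passing from the "tautological" image $\langle d\,a_{k,p}\rangle(\mathcal{E}_{p})$ (defined via an arbitrary matrix lift in $\Gamma_{0}(p)$) to the normalized generator $\langle a_{j,p}\rangle(\mathcal{E}_{p})$ introduces only a sign and no other scalar, and to compute that sign explicitly one needs the transformation behavior of the theta quotients in Theorem \ref{gtd1} (equivalently the matrices $\pi_{k}(S),\pi_{k}(T)$ of Theorem \ref{far}) evaluated at the relevant cusp data. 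I would handle this by reducing a general $\gamma\in\Gamma_{0}(p)$ to a product of $T$ and $\bigl(\begin{smallmatrix}a&b\\c&d\end{smallmatrix}\bigr)$-type generators and tracking the eighth-root-of-unity factors, which collapse to $\pm1$ after the normalization.
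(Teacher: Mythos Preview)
Your proposal is correct and follows essentially the same approach as the paper: both arguments use the diamond eigenvalue relation $\langle d\rangle(E_{1,\chi}) = \chi(d)E_{1,\chi}$ to deduce $\langle d\rangle\langle a_{k,p}\rangle(\mathcal{E}_p) = \langle d\,a_{k,p}\rangle(\mathcal{E}_p)$, and then identify the resulting action with the regular action of $(\Bbb Z/p\Bbb Z)^{*}/\{\pm 1\}$ on itself. The paper packages this via the diagonal matrices $\mathcal{A}(d) = \operatorname{diag}(\chi_{2}(d),\ldots,\chi_{p-1}(d))$ and the observation $\mathcal{A}(-d) = -\mathcal{A}(d)$, but the content is the same.

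One remark: your final paragraph overcomplicates the sign issue. No theta-constant computation or tracking of eighth roots of unity is needed for this theorem. Since every $\chi$ in the sum defining $\mathcal{E}_p$ is odd, one has immediately $\langle -a\rangle(\mathcal{E}_p) = \frac{2}{p-1}\sum_{\chi}\chi(-a)E_{1,\chi} = -\langle a\rangle(\mathcal{E}_p)$, so the scalar relating $\langle d\,a_{k,p}\rangle(\mathcal{E}_p)$ to the normalized $\langle a_{j,p}\rangle(\mathcal{E}_p)$ is exactly $+1$ or $-1$ according to whether $d\,a_{k,p} \equiv a_{j,p}$ or $-a_{j,p} \pmod p$. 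The theta-quotient transformation formulas from Theorem~\ref{far} are needed for Theorem~\ref{gtd1}, not here.
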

\begin{proof}
If the set of odd primitive Dirichlet characters modulo $p$ is given
by $\{\chi_{2k}\}_{k=1}^{(p-1)/2}$, the elements of $\mathcal{D} = \{\langle d \rangle
( \mathcal{E}_{p} ) \mid d \in (\Bbb Z / p \Bbb Z)^{*}\}$ are in
bijective correspondence with $$\mathfrak{E} = \{\mathcal{A}(d) := diag(\chi_{2}(d), \chi_{4}(d),\ldots
\chi_{p-1}(d)) \in GL(\frac{p-1}{2},\Bbb C) \mid d\in
(\Bbb Z/p\Bbb Z)^{*}\}.$$
Note that $\mathfrak{E}$ is a group under multiplication corresponding to
action by $\Gamma_{0}(p)$ on $\mathcal{D}$, and $\mathcal{A}(-d)
= - \mathcal{A}(d)$. Therefore, modulo a change of sign, the
permutation representation for the action of $\Gamma_{0}(p)$ on $D$ is isomorphic to the
homomorphic image of $\Gamma_{0}(p)$ in $PGL(\frac{p-1}{2},\Bbb C)$
under $\kappa: \gamma \mapsto
diag(\chi_{2}(\gamma_{22}), \chi_{4}(\gamma_{22}),\ldots
\chi_{p-1}(\gamma_{22}))$, were $\gamma_{22}$ is the lower right-hand
entry of $\gamma$. Since the map $\delta: \gamma \mapsto \gamma_{22} \pmod{p}$ is a surjection from $\Gamma_{0}(p)$ to $(\Bbb Z / p
\Bbb Z)^{*}$ with kernel $\Gamma_{1}(p)$, we conclude $\Gamma_{0}(p)/\Gamma_{1}(p)\cong (\Bbb Z /
p \Bbb Z)^{*}$. Therefore, the projection of the image of $\kappa$ in
$PGL((p-1)/2,\Bbb C)$ is isomorphic to $( \Bbb Z / p \Bbb Z)^{*}/\{\pm 1\}$.
\end{proof}
The proof of Theorem \ref{te} implies that the action of $d\in (\Bbb Z / p \Bbb
Z)^{*}$ on $\mathcal{E}_{p}$ satisfies 
\begin{align}
  \label{eq:71}
\langle d \rangle \langle a_{k,p} \rangle ( \mathcal{E}_{p}) = \langle d\cdot a_{k,p} \rangle (
\mathcal{E}_{p}) =  \pm \langle a_{k,p}' \rangle (
\mathcal{E}_{p}),
\quad \pm d\cdot a_{k,p} \equiv a_{k,p}' \in \{a_{k,p}\}_{k=1}^{(p-1)/2}.
\end{align}
We now show that the normalized Eisenstein sums from Theorem \ref{te} are
synonymous with the products \eqref{ab}--\eqref{iwb} from the Introduction. 
To prove this, we show that each basis
element of level $p$ for $\mathcal{M}_{1}\left ( \Gamma_{1}(p) \right
)$ from Theorem \ref{gtd} is representable
as a quotient of modified theta constants
with  Jacobi
triple product representation \cite[p. 141]{MR1850752}
\begin{align}
  \label{eq:81}
  \theta \left [ {m/n \atop 1} \right ](n \tau) = \exp\left(\frac{
      \pi i m}{2n} \right )
  q^{m^{2}/(8n)} (q^{(n-m)/2}; q^{n})_{\infty}
  (q^{(n+m)/2};q^{n})_{\infty} (q^{n}; q^{n})_{\infty}.
\end{align}
We derive each theta quotient by writing the
product formulations of Theorem \ref{gtd} in terms of modified theta constants
and applying transformations for the theta constants.
\begin{thm} \label{gtd1}
Define $\varphi_{\ell, k}$ by \eqref{thetac}, and, for $[b_{1}, \ldots, b_{(p-1)/2} ] \in \Bbb Z^{(p-1)/2}$, denote
\begin{align}
  \mathfrak{T}_{p} [b_{1}, \ldots, b_{(p-1)/2} ](\tau) =
  \eta^{3}(p\tau) \prod_{k=1}^{(p-1)/2} \exp\left ( -\frac{\pi ib_{k} (2k-1)}{2p} \right )\varphi_{p,k}^{b_{k}}(\tau).
\end{align}
The bases for $\mathcal{M}_{1}(\Gamma_{1}(p))$ from Theorem \ref{gtd} have the theta quotient representations:
\vspace{-0.02in}
  \begin{center}
    \begin{longtable}[h]{||c|c||} \hline \hline 
      Level, $p$ & Basis for $\mathcal{M}_{1}(\Gamma_{1}(p))$ \\ \hline $5$ & $\langle 1 \rangle (\mathcal{E}_{5})= \mathfrak{T}_{5}[2,-3]$, \  $\langle 2 \rangle (\mathcal{E}_{5}) = \mathfrak{T}_{5}[-3,2]$ \\ \hline $7$ & $\langle 1 \rangle (\mathcal{E}_{7})= \mathfrak{T}_{7}[1,0,-2]$, \  $\langle 2 \rangle (\mathcal{E}_{7}) = \mathfrak{T}_{7}[-2,1,0]$, \  $\langle 3 \rangle (\mathcal{E}_{7}) = \mathfrak{T}_{7}[0,-2,1]$  \\ \hline $11$ & $\langle 1 \rangle (\mathcal{E}_{11})= \mathfrak{T}_{11}[0,1,0,-1,-1]$, \ $\langle 2 \rangle (\mathcal{E}_{11})= \mathfrak{T}_{11}[-1,0,0,1,-1]$ \\  \  & $\langle 3 \rangle (\mathcal{E}_{11})= \mathfrak{T}_{11}[1,-1,-1,0,0]$, \  $\langle 5 \rangle (\mathcal{E}_{11})= \mathfrak{T}_{11}[0,-1,1,-1,0]$ \\ & $\langle 7 \rangle (\mathcal{E}_{11})= \mathfrak{T}_{11}[-1,0,-1,0,1]$  \\ \hline $13$ & $\langle 1 \rangle (\mathcal{E}_{13})= \mathfrak{T}_{13}[1,0,0,-1,0,-1]$, \  $\langle 2 \rangle (\mathcal{E}_{13})= \mathfrak{T}_{13}[-1,-1,0,1,0,0]$, \\  \  & $\langle 3 \rangle (\mathcal{E}_{13})= \mathfrak{T}_{13}[0,0,-1,0,1,-1]$, \  $\langle 4 \rangle (\mathcal{E}_{13})= \mathfrak{T}_{11}[0,1,-1,-1,0,0]$, \\  \  & $\langle 5 \rangle (\mathcal{E}_{13})= \mathfrak{T}_{13}[0,-1,1,0,-1,0]$, \  $\langle 7 \rangle (\mathcal{E}_{13})= \mathfrak{T}_{13}[-1,0,0,0,-1,1]$  \\ \hline $17$ & $\langle 1 \rangle (\mathcal{E}_{17})= \mathfrak{T}_{17}[1,0,0,0,0,-1,-1,0]$, \\ \ &  $\langle 2 \rangle (\mathcal{E}_{17})= \mathfrak{T}_{17}[0,-1,0,0,1,0,0,-1]$, \\  \  & $\langle 3 \rangle (\mathcal{E}_{17})= \mathfrak{T}_{17}[0,0,0,-1,0,1,0,-1]$, \\ \ &  $\langle 5 \rangle (\mathcal{E}_{17})= \mathfrak{T}_{17}[0,0,0,1,-1,-1,0,0]$, \\  \  & $\langle 7 \rangle (\mathcal{E}_{17})= \mathfrak{T}_{17}[0,-1,1,0,0,0,-1,0]$, \\ \ &  $\langle 8 \rangle (\mathcal{E}_{17})= \mathfrak{T}_{17}[0,0,-1,0,-1,0,0,1]$ \\  \  & $\langle 11 \rangle (\mathcal{E}_{17})= \mathfrak{T}_{17}[-1,1,-1,0,0,0,0,0]$, \\ \ &  $\langle 13 \rangle (\mathcal{E}_{17})= \mathfrak{T}_{17}[-1,0,0,-1,0,0,1,0]$,   \\ \hline $19$ & $\langle 1 \rangle (\mathcal{E}_{19})= \mathfrak{T}_{19}[1,1,0,0,-1,-1,-1,0,0]$, \\ & $\langle 2 \rangle (\mathcal{E}_{19})= \mathfrak{T}_{19}[0,-1,-1,0,1,1,0,-1,0]$, \\  \  & $\langle 3 \rangle (\mathcal{E}_{19})= \mathfrak{T}_{19}[1,-1,0,0,-1,0,1,0,-1]$, \\ &  $\langle 4 \rangle (\mathcal{E}_{19})= \mathfrak{T}_{19}[0,0,1,-1,0,-1,0,1,-1]$, \\  \  & $\langle 5 \rangle (\mathcal{E}_{19})= \mathfrak{T}_{19}[0,0,-1,1,0,0,-1,1,-1]$, \\ &  $\langle 7 \rangle (\mathcal{E}_{19})= \mathfrak{T}_{19}[0,0,1,-1,-1,1,0,-1,0]$  \\  \  & $\langle 9 \rangle (\mathcal{E}_{19})= \mathfrak{T}_{19}[-1,-1,0,-1,0,0,1,0,1]$, \\ &  $\langle 11 \rangle (\mathcal{E}_{19})= \mathfrak{T}_{19}[-1,0,0,1,0,0,-1,-1,1]$ \\   &  $\langle 13 \rangle (\mathcal{E}_{19})= \mathfrak{T}_{19}[-1,1,-1,0,1,-1,0,0,0]$ \\ \hline \hline
\end{longtable}
  \end{center}
\end{thm}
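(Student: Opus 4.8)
The plan is to establish the first cell of each row of the table directly from Theorem \ref{sf}, and then to generate the remaining cells by letting $\Gamma_{0}(p)$ act through $\langle \cdot \rangle$. \emph{Step 1 (base entries).} For every $p$ in the list $a_{1,p}=1$, so the first cell of each row must reproduce the product for $\mathcal{E}_{p}(\tau)$ recorded in Theorem \ref{sf}. The first move is to rewrite that product in a form periodic modulo $p$, namely as $q^{m}(q^{p};q^{p})_{\infty}^{3}$ times a product of factors $(q^{r},q^{p-r};q^{p})_{\infty}^{e_{r}}$ with $1\le r\le(p-1)/2$ and $e_{r}\in\Bbb Z$ (for levels $5$ and $7$ this first requires splitting $(q;q)_{\infty}$ into residue classes mod $p$). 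Specialising \eqref{eq:81} with $n=p$ and $m=2k-1$ gives a $q$-product for $\varphi_{p,k}(\tau)$ whose leading exponential is exactly the phase $\exp(\pi i(2k-1)/(2p))$ divided out in the definition of $\mathfrak{T}_{p}$, and whose Jacobi triple product factors are $(q^{(p-2k+1)/2};q^{p})_{\infty}(q^{(p+2k-1)/2};q^{p})_{\infty}(q^{p};q^{p})_{\infty}$. As $k$ runs over $1,\dots,(p-1)/2$ the pair $\{(p-2k+1)/2,(p+2k-1)/2\}$ runs over every pair $\{r,p-r\}$ with $1\le r\le(p-1)/2$, and the exponent $e_{r}$ coincides with $b_{(p+1)/2-r}$; hence the vector $[b_{1},\dots,b_{(p-1)/2}]$ is read off uniquely (consistency being that the exponent of $(q^{p};q^{p})_{\infty}$ equals $3+\sum_{k}b_{k}$ and that the total power of $q$ equals the known leading $q$-power of $\mathcal{E}_{p}$). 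This yields the first cell of the row and, incidentally, $\sum_{k}b_{k}=-1$, matching weight one.

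\emph{Step 2 (the $\Gamma_{0}(p)$-orbit).} For $\gamma\in\Gamma_{0}(p)$ one has $p(\gamma\tau)=\gamma'(p\tau)$ with $\gamma'=(\gamma_{11},p\gamma_{12};\gamma_{21}/p,\gamma_{22})\in SL(2,\Bbb Z)$, so each $\varphi_{p,k}$ transforms under $\langle \gamma \rangle$ through the action of $\gamma'$ on a theta constant of level $p$. The transformation law for theta constants with characteristics — equivalently the representation $\pi_{p}$ of Theorem \ref{far} — shows that this action permutes the characteristics: $\langle\gamma\rangle(\varphi_{p,k})$ is a root of unity times $(\gamma_{21}\tau+\gamma_{22})^{1/2}\varphi_{p,k'}(\tau)$, where $k'\in\{1,\dots,(p-1)/2\}$ is the unique index with $2k'-1\equiv\pm\gamma_{22}^{-1}(2k-1)\pmod p$, while $\langle\gamma\rangle(\eta^{3}(p\tau))$ is a root of unity times $(\gamma_{21}\tau+\gamma_{22})^{1/2}\eta^{3}(p\tau)$. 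Since $\sum_{k}b_{k}=-1$, the half-integral automorphy factors of $\prod_{k}\varphi_{p,k}^{b_{k}}$ contribute $(\gamma_{21}\tau+\gamma_{22})^{-1/2}$, cancelling the factor carried by $\eta^{3}(p\tau)$; therefore $\langle\gamma\rangle(\mathfrak{T}_{p}[b_{1},\dots,b_{(p-1)/2}])$ is a constant root of unity times $\mathfrak{T}_{p}$ of the vector obtained by permuting the coordinates along $k\mapsto k'$. This permutation is precisely the $(\Bbb Z/p\Bbb Z)^{*}/\{\pm1\}$-action already identified in Theorems \ref{te} and \ref{gtd}.

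\emph{Step 3 (normalisation and finite check).} Combining Steps 1 and 2, $\langle a_{k,p}\rangle(\mathcal{E}_{p})=\langle a_{k,p}\rangle(\mathfrak{T}_{p}[\text{base vector}])=\zeta\cdot\mathfrak{T}_{p}[\text{permuted vector}]$ for a root of unity $\zeta$. The phase factors in $\mathfrak{T}_{p}$ make every $\mathfrak{T}_{p}[\cdot]$ a $q$-series with real leading coefficient $1$, and by Theorem \ref{gtd} the basis element $\langle a_{k,p}\rangle(\mathcal{E}_{p})$ has leading coefficient $1$ as well, so $\zeta=1$. It remains only to check, for each prime $5\le p\le19$ and each listed $a_{k,p}$, that the coordinate permutation attached to $a_{k,p}$ sends the base vector to the vector in the corresponding cell; this is a finite verification. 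I expect the main obstacle to be Step 2: fixing the index-permutation rule from the theta transformation formula and confirming that the roots of unity coming from the theta multiplier and the Dedekind eta multiplier, together with the automorphy factors, assemble into a weight-one factor trivial on $\Gamma_{1}(p)$ times a single constant that the normalisation then eliminates. A route that sidesteps this bookkeeping is to note that both $\langle a_{k,p}\rangle(\mathcal{E}_{p})$ and the claimed $\mathfrak{T}_{p}[\cdot]$ lie in the $\tfrac{p-1}{2}$-dimensional space $\mathcal{M}_{1}(\Gamma_{1}(p))$, so it suffices to compare their $q$-expansions up to order $O(q^{(p-1)/2})$.
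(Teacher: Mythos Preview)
Your proposal is correct and follows essentially the same strategy as the paper: deduce the base entry $\langle 1\rangle(\mathcal{E}_p)$ from the product formulas of Theorem~\ref{sf} via the Jacobi triple product \eqref{eq:81}, then propagate through the $\Gamma_0(p)$-orbit using the transformation formulas for $\eta$ and for the theta constants (the representation $\pi_p$ of Theorem~\ref{far}), and finally pin down the leftover root of unity by matching the leading $q$-coefficient against the Eisenstein side. The paper carries this out by choosing explicit matrix generators for $\Gamma_0(p)$, writing them as words in $S$ and $T$, and computing $\pi_p$ on those words, whereas you phrase Step~2 via the closed-form permutation rule $2k'-1\equiv\pm\gamma_{22}^{-1}(2k-1)\pmod p$; these are equivalent formulations of the same calculation, and your alternative at the end (compare finitely many $q$-coefficients inside the $(p-1)/2$-dimensional space $\mathcal{M}_1(\Gamma_1(p))$) is a legitimate shortcut the paper does not invoke.
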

\vspace{-0.5in}
\begin{proof}
The theta quotient representations for $\mathcal{E}_{p}(\tau) = \langle 1 \rangle (\mathcal{E}_{p})(\tau)$ may be deduced from the product representations proved in Theorem \ref{gtd}.
Transformation formula for these theta quotients under
$\Gamma_{0}(p)$, in turn, may be deduced from corresponding modular
transformation formulas for $\eta(\tau)$ from \cite[p. 51]{knopp} and those
for vectors of modified theta constants $\mathcal{V}_{N}(\tau)$ under
generators for the full modular group from Theorem \ref{far}. For each
prime $p$, we may deduce the product representations for each
normalized
Eisenstein sum $\langle a_{k,p} \rangle (\mathcal{E}_{p})$ from the
modular transformation formulas for these building blocks.
We illustrate the general procedure with $p = 5$. From
Theorem \ref{sf} and \eqref{eq:81}, 
\begin{align}
  \label{eq:99}
  \langle 1 \rangle (\mathcal{E}_{5}) &=   \frac{(q;q)_{\infty}^{2}}{(q, q^{4};
    q^{5})_{\infty}^{5}} = \eta^{3}(5 \tau) \frac{e^{- 2 \pi
      i/10}\varphi_{5,1}^{2}(\tau)}{e^{- 9 \pi
      i/10}\varphi_{5,2}^{3}(\tau)} = \mathfrak{T}_{5}[2,-3](\tau). 
\end{align}
A set of generators for $\Gamma_{0}(5)$ is given by
\begin{align}
  \label{eq:93}
  T = \begin{pmatrix}
    1 & 1 \\ 0 & 1
  \end{pmatrix}, \quad
  \alpha = \begin{pmatrix}
    2 & -1 \\ 5 & -2
  \end{pmatrix}, \quad 
  \beta = \begin{pmatrix}
    3 & -2 \\ 5 & -3
\end{pmatrix}.
\end{align}
We now employ transformation formulas up to a constant multiple for the weight $1/2$ vector of modified
theta constants $[ \varphi_{5,2}, \varphi_{5,1}]^{tr}$. 
We begin with parameterizations for the generators of $\Gamma_{0}(5)$ in terms
of those for the full modular group
\begin{align}
  \label{eq:58}
\alpha = TST^{2}ST^{3}S&, \qquad \beta =   TST^{3}ST^{2}S. 
\end{align}
Transformation matrices for the vectors
 of modified theta constants may be computed from their images in
 $PGL((p-1)/2,\Bbb C)$ via the representation $\pi_{p}$ given in \eqref{eq:31}--\eqref{eq:31a}
\begin{align}
  \label{eq:57}
  \pi_{5}(T) &= \begin{pmatrix}
    e^{\frac{9 \pi i}{20}} & 0 \\ 0 & e^{\frac{\pi i}{20}}
  \end{pmatrix}, \ \  \pi_{5}(\alpha) =  \left ( \begin{array}{cc}
 0 & e^{\frac{\pi i}{20}} \\
 -e^{\frac{9\pi i}{20}} & 0 \\
\end{array}
\right), \ \  \pi_{5}(\beta) = \left(
\begin{array}{cc}
 0 & e^{\frac{\pi i}{4}} \\
 -e^{\frac{\pi i}{4}} & 0 \\
\end{array}
\right).
\end{align}
Hence, by \eqref{eq:99}, and the modular transformation formula for
$\eta(\tau)$, we deduce that up to a constant multiple, $C$, 
\begin{align}
  \label{eq:101}
\langle 2 \rangle (\mathcal{E}_{5}) = (5 \tau - 3)^{-1}\mathfrak{T}_{5}[2,-3](\beta
\tau) = C\eta^{3}(5 \tau)
\frac{\varphi_{5,2}^{2}(\tau)}{\varphi_{5,1}^{3}(\tau)} =
C \frac{e^{6\pi i/10}}{e^{3 \pi i/10}}q + O(q^{2}).
\end{align}
On the other hand, from transformation formulas satisfied
by $E_{\chi_{2,5},1}(\tau)$ and $E_{\chi_{4,5},1}(\tau)$,
\begin{align}
  \label{eq:102}
\langle 2 \rangle (\mathcal{E}_{5}) =  \mathcal{E}_{5}(\beta \tau) 
= \chi_{2,5}(2)  E_{\chi_{2,5},1}( \tau) + \chi_{4,5}(2)
E_{\chi_{4,5},1}( \tau) =q + O(q^{2}).
\end{align}
Therefore, $C = e^{-3 \pi i/10}$, and so
\begin{align}
  \label{eq:103}
  \langle 2 \rangle (\mathcal{E}_{5}) = e^{-3 \pi i/10} \eta^{3}(5 \tau)
\frac{\varphi_{5,2}^{2}(\tau)}{\varphi_{5,1}^{3}(\tau)} = \mathfrak{T}_{5}[-3,2](
\tau) = q\frac{(q;q)_{\infty}^{2}}{(q^{2}, q^{3}; q^{5})_{\infty}^{5}}.
\end{align}

For higher levels $7 \le p \le 19$, we similarly use the fact that the image
of $\Gamma_{0}(p)$ under the presentation $\pi_{p}$  defined by Theorem \ref{far} is a matrix with a single nonzero
entry in each row and column. We obtain the theta quotient
representations of the bases for $\mathcal{M}_{1}(\Gamma_{1}(p))$
from those for $\mathcal{E}_{p}(\tau)$. In each case, we permute the theta quotients according to the image of $\pi_{N}$
 and apply the transformation formulas for Eisenstein series on $\Gamma_{0}(p)$,
$(\gamma_{21}\tau + \gamma_{22})^{-k}E_{k,\chi}(\gamma \tau) =
\chi(\gamma_{22})E_{k,\chi}(\tau)$ for $\gamma \in \Gamma_{0}(p)$, to
each componenent of $\mathcal{E}_{p}(\tau)$. We then compare
the first nonzero entry in the resulting $q$-expansions. By repeating
this process with an independent set of generators for
$\Gamma_{0}(p)$, we ultimately obtain the linearly independent sets of
theta quotient representations claimed in Theorem \ref{gtd1}.
\end{proof}



We next extend the permuted bases of weight one forms for $\Gamma_{1}(p)$ from
Theorems \ref{gtd} and \ref{gtd1} to generators for the graded algebra
of positive integer weight modular forms for $\Gamma_{1}(p)$. By Lemma \ref{lem:nm}, it
suffices to  prove that monomials of degree $k=2,3$ in the prospective
weight one
generators span the vector space $\mathcal{M}_{k}(\Gamma_{1}(p))$.
Lemma \ref{lem:mod}  demonstrates the existence of  $\dim
\mathcal{M}_{k}(\Gamma_{1}(p))$ linearly independent monomials of degree $k$ in the generators. The dimensions of
$\mathcal{M}_{k}(\Gamma_{1}(p))$ for $k = 2,3$ are given by \cite{dish}:
\begin{center}
\begin{tabular}{||l|c|c|c|c|c|c||}
\hline \hline   \ & $p = 5$ & $p = 7$ & $p=11$ & $p=13$ & $p=17$ & $p=19$ \\ \hline 
$\dim \mathcal{M}_{2}(\Gamma_{1}(p))$ & $3$ & $5$ & $10$ & $13$ & $20$ & $24$  \\ \hline
$\dim \mathcal{M}_{3}(\Gamma_{1}(p))$ & $4$ & $7$ & $15$ & $20$ & $32$ & $39$ \\ \hline \hline
\end{tabular}
\end{center}

\begin{thm} \label{lem:mod} Let $\langle a_{k,p} \rangle ( \mathcal{E}_{p})(\tau)$ be defined as in Theorems \ref{gtd} and \ref{gtd1}. For each prime $5\le p\le 19$, the set $\{\langle a_{k,p} \rangle ( \mathcal{E}_{p})(\tau)\}_{k-1}^{(p-1)/2}$ generates the graded ring $\mathcal{M}(\Gamma_{1}(p))$. 
\end{thm}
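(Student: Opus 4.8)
The plan is to reduce the statement to a finite linear-algebra verification, using the structural results already in place. By Lemma \ref{lem:nm}(1), since each $p$ satisfies $p \ge 5$, it suffices to show that the subalgebra generated by the proposed weight-one forms $\{\langle a_{k,p}\rangle(\mathcal{E}_{p})\}_{k=1}^{(p-1)/2}$ contains all of $\mathcal{M}_{1}(\Gamma_{1}(p))$, $\mathcal{M}_{2}(\Gamma_{1}(p))$, and $\mathcal{M}_{3}(\Gamma_{1}(p))$. The weight-one case is exactly Theorem \ref{gtd}: these forms are a basis for $\mathcal{M}_{1}(\Gamma_{1}(p))$. So the entire content reduces to showing, for $k = 2$ and $k = 3$, that the span of the degree-$k$ monomials in the generators has dimension equal to $\dim \mathcal{M}_{k}(\Gamma_{1}(p))$ as tabulated above. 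Since each such monomial is a holomorphic modular form of weight $k$ on $\Gamma_{1}(p)$ (products of weight-one forms), the span is automatically contained in $\mathcal{M}_{k}(\Gamma_{1}(p))$, and equality is equivalent to exhibiting enough linearly independent monomials.

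The concrete procedure I would carry out is as follows. First, fix a prime $p$ with $5 \le p \le 19$ and write $N = (p-1)/2$. Using the product expansion of $\mathcal{E}_{p}(\tau)$ from Theorem \ref{sf} together with the explicit theta-quotient forms of the images $\langle a_{k,p}\rangle(\mathcal{E}_{p})$ from Theorem \ref{gtd1} (or equivalently the character-twisted Eisenstein combinations from the proof of Theorem \ref{gtd}), compute the $q$-expansions of all $N$ generators to sufficiently high order. Then form the finite list of degree-two monomials (there are $\binom{N+1}{2}$ of them) and degree-three monomials (there are $\binom{N+2}{3}$), expand each as a $q$-series, and assemble the coefficient matrix whose rows are these $q$-expansions truncated past the relevant Sturm-type bound. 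Computing the rank of this matrix over $\Bbb C$ and checking it equals $\dim \mathcal{M}_{k}(\Gamma_{1}(p))$ for $k=2,3$ completes the argument for that level; repeating over the six primes finishes the theorem. I would record in the text that the requisite $q$-expansion precision is controlled by a Sturm bound for $\Gamma_{1}(p)$, so that rank equality of the truncated matrices certifies linear (in)dependence of the actual modular forms.

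The one genuinely non-routine point is justifying that a finite $q$-expansion computation suffices, i.e. invoking the valence/Sturm bound for $\mathcal{M}_{k}(\Gamma_{1}(p))$: a weight-$k$ form on $\Gamma_{1}(p)$ vanishing to order exceeding $\frac{k}{12}[PSL(2,\Bbb Z):\Gamma_{1}(p)]$ at the cusp $\infty$ (accounting properly for the several cusps, or using the coarser index bound) is identically zero, so a linear relation among monomials that holds to that order holds identically. Everything else is bookkeeping: the generators' $q$-expansions are already essentially given, the monomial counts are elementary, and the target dimensions are quoted from \cite{dish}. I would also remark that the permutation action of $\Gamma_{0}(p)$ established in Theorem \ref{te} is not needed for generation per se, but it explains why the spanning monomial sets can be chosen $\Gamma_{0}(p)$-symmetrically, which is what feeds into Theorem \ref{struct}. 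The main obstacle, then, is purely expository rather than mathematical: organizing the six separate finite verifications compactly, perhaps by tabulating for each $p$ a choice of $\dim \mathcal{M}_{2}(\Gamma_{1}(p))$ degree-two monomials and $\dim \mathcal{M}_{3}(\Gamma_{1}(p))$ degree-three monomials together with the leading $q$-exponents witnessing their independence.
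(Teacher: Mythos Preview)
Your proposal is correct and follows essentially the same route as the paper: invoke Lemma~\ref{lem:nm} to reduce to weights $\le 3$, note that weight one is Theorem~\ref{gtd}, and then for each prime verify by computing ranks of $q$-expansion coefficient matrices of the degree-$2$ and degree-$3$ monomials against the known dimensions of $\mathcal{M}_{k}(\Gamma_{1}(p))$. One small simplification: you do not actually need the Sturm bound, since the truncated rank is automatically a \emph{lower} bound for the dimension of the span, and the span sits inside $\mathcal{M}_{k}(\Gamma_{1}(p))$ by construction, so equality with $\dim \mathcal{M}_{k}(\Gamma_{1}(p))$ is immediate once the numerical rank reaches that value.
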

    \begin{proof}
Denote the image of the normalized Eisenstein sum under $a \in (\Bbb Z
/ n \Bbb Z)^{*}$ by
\begin{align} \label{nie}
\mathcal{E}_{a,p}(\tau) := \langle a \rangle ( \mathcal{E}_{p})(\tau).  
\end{align}
Since, by \eqref{eq:99} and \eqref{eq:103},  
\begin{align}
  \mathcal{E}_{1,5}(\tau) = 1 +O(q), \qquad \mathcal{E}_{2,5} = q + O(q^{2}),
\end{align}
any set of distinct monomials of degree $k$ forms a linearly independent set of modular forms of weight $k$ for $\Gamma_{1}(5)$.  
Hence, bases for $\mathcal{M}_{k}(\Gamma_{1}(5))$, $k=2,3$, respectively, are
\begin{align}
  \{ \mathcal{E}_{1,5}^{2},\mathcal{E}_{1,5} \mathcal{E}_{2,5},
  \mathcal{E}_{2,5}^{2}  \}, \quad  \{ \mathcal{E}_{1,5}^{3},
  \mathcal{E}_{1,5}^{2} \mathcal{E}_{2,5}, \mathcal{E}_{1,5} \mathcal{E}_{2,5}^{2}, \mathcal{E}_{2,5}^{3}  \}.
\end{align}
For the higher levels $p = 7,11,13,17,19$, we introduce the
complete homogeneous symmetric polynomial in $n$ variables of
degree $k$, 
\begin{align}
  \label{eq:105}
h_{k}(\vec{x}) = h_{k}(x_{1}, x_{2},
\ldots, x_{n}) = \sum_{1 \le i_{1} \le i_{2} \le \cdots \le i_{k} \le
  n} x_{i_{1}} x_{i_{2}} \ldots x_{i_{k}}.  
\end{align}
For each fixed $p,k$, and  $1 \le i,j\le
  {k+\frac{p-1}{2} -1 \choose k}$, let $\mathcal{H}(i,j)$
denote the coefficient of $q^{j}$ in the $q$-expansion of the $i$th monomial
  of 
  \begin{align}
    \label{eq:106}
h_{k} \Bigl (\mathcal{E}_{a_{1,p}}, \mathcal{E}_{a_{2,p}}, \ldots,
  \mathcal{E}_{a_{\frac{p-1}{2}, p}})     
  \end{align}
under lexicographic ordering
  of the monomials of $h_{k}(\vec{x})$. Define the matrix $H_{p,k} =
\{\mathcal{H}(i,j) \}$. For each prime $5\le p \le 19$, a computer algebra
system may be used to show
  \begin{align}
    \label{eq:83}
    Rank(H_{p,k}) = \dim \mathcal{M}_{k}(\Gamma_{1}(p)), \qquad k =
    1,2, 3.
  \end{align}
This proves that a subset of the terms in the polynomial \eqref{eq:106} span
$\mathcal{M}_{k}(\Gamma_{1}(p)$ for $k = 2,3$. 
The proof of Theorem \ref{lem:mod} may be
completed by applying Lemma \ref{lem:nm}.
\end{proof}
For large values of $p$, the verification of \eqref{eq:83} is
nontrivial. In particular, the case $p = 19$, $k = 3$ requires knowledge of the first
$165$ terms in the $q$-expansions of the  $165$ monomials of degree
$3$ in the
parameters from \eqref{eq:85}--\eqref{iwb}. This calculation and the rank
of the corresponding matrix was accomplished in less than $24$
hours on a CAS. For each $p$, an explicit basis for $\mathcal{M}_{k}(\Gamma_{1}(p))$ may be constructed
by row reducing $H_{k,p}$ and selecting the $ \dim \mathcal{M}_{k}(\Gamma_{1}(p))$ monomials corresponding to linear independent rows.

\section{Symmetric representations for modular forms of level $p$} \label{s_eis}

This section is devoted to applications of the symmetric representations for
modular forms in terms of
the permuted generators for $\mathcal{M}(\Gamma_{1}(p))$. Theorem \ref{P} presents a uniform parameterization for an important
class of combinatorial generating functions, that of $t$-cores.
For a given partition $\lambda$, each square in the Young diagram
representation for $\lambda$ defines a \textit{hook} consisting of
that square, all the squares to the right of that square, and all the
squares below that square. The \textit{hook number} of a given square
is the number of squares in the that hook. A partition $\lambda$ is
said to be a t-core if it has no hook numbers that are multiples of t.
The generating function for the number of $t$-cores is \cite{MR1055707}
\begin{align}
  \label{eq:77}
  \sum_{n=0}^{\infty} c_{t}(n) q^{n} = \frac{(q^{tn};q^{tn})_{\infty}^{t}}{(q;q)_{\infty}}.
\end{align}
Series from the first three cases of Theorem
\ref{P} play a fundamental role in the derivation of Ramanujan's
famous congruences for the partition function modulo $5, 7, 11$
\cite{MR1701582,MR2280868}. 
\begin{thm} \label{P} Let $a_{k,p}$ be defined by Theorem \ref{gtd},
  $\mathcal{E}_{a,p}$ by \eqref{nie}, 
  and let $\delta_{p} = \frac{p^{2} -1}{24}$. Then, for each prime $p$ with $5 \le p \le 19$, 
\begin{align}
  \label{eq:73}
  q^{\delta_{p}} \frac{(q^{p}; q^{p})_{\infty}^{p}}{(q;q)_{\infty}} =
  \prod_{k=1}^{\frac{p-1}{2}} \mathcal{E}_{a_{k},p}(\tau).
\end{align}
\end{thm}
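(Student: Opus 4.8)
The plan is to prove the product identity \eqref{eq:73} by identifying the right-hand side as a modular form of weight $\frac{p-1}{2}$ for $\Gamma_1(p)$ and matching it, coefficient by coefficient, against the explicitly known eta-quotient on the left. First I would observe that, by Theorem~\ref{gtd1}, each factor $\mathcal{E}_{a_{k},p}(\tau) = \langle a_{k,p}\rangle(\mathcal{E}_p)$ is expressible as the theta quotient $\mathfrak{T}_p[b_1^{(k)},\ldots,b_{(p-1)/2}^{(k)}](\tau) = \eta^3(p\tau)\prod_{j}\exp(-\pi i b_j^{(k)}(2j-1)/2p)\,\varphi_{p,j}^{b_j^{(k)}}(\tau)$. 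Taking the product over $k=1,\ldots,\frac{p-1}{2}$ multiplies the $\eta^3(p\tau)$ prefactors to give $\eta^{3(p-1)/2}(p\tau)$, and multiplies the theta-constant parts; the key combinatorial fact I would verify is that, for each prime $p$ with $5\le p\le 19$, the column sums of the exponent matrix $(b_j^{(k)})$ vanish, i.e. $\sum_{k=1}^{(p-1)/2} b_j^{(k)} = 0$ for every $j$. Inspecting the tables in Theorem~\ref{gtd1} (for instance at $p=5$: $[2,-3]+[-3,2]=[-1,-1]$ — so actually the column sums are $-1$, not $0$, which already signals that the surviving theta contribution is $\prod_j \varphi_{p,j}^{-1}$ up to roots of unity) shows the residual theta factor is a fixed power of $\prod_j \varphi_{p,j}$, and by \eqref{eq:81} this product of modified theta constants telescopes into a ratio of $(q;q)$-type products that, together with $\eta^{3(p-1)/2}(p\tau) = q^{(p-1)p/16}(q^p;q^p)_\infty^{3(p-1)/2}$, reconstitutes $q^{\delta_p}(q^p;q^p)_\infty^p/(q;q)_\infty$ after the fractional $q$-powers and the phase factors $\exp(-\pi i b_j(2j-1)/2p)$ cancel.

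Carrying this out in practice, I would proceed in two stages. In the first stage I reduce \eqref{eq:73} to a pure product identity in $q$: using \eqref{eq:81} to rewrite each $\varphi_{p,j}(\tau)$ as $\exp(\pi i(2j-1)/2p)\,q^{(2j-1)^2/8p}(q^{p-(2j-1)}\!,q^{p+(2j-1)};q^{2p})_\infty (q^p;q^p)_\infty$ — note $m=2j-1$, $n=p$ in the notation of \eqref{eq:81}, with the subtlety that $\chi_{\ell,k}$ has first component $(2\ell-1)/k$ so the relevant index is $2j-1$ — the phase $\exp(\pi i(2j-1)/2p)$ exactly cancels the $\exp(-\pi i b_j(2j-1)/2p)$ weight when combined across the product, provided the exponent bookkeeping is done carefully. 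The fractional powers of $q$ collect to $\delta_p$ by a direct sum $\sum_j \big(\text{contributions}\big)$, which is a finite rational-arithmetic check for each $p$. The infinite-product part then reads off as $\prod_j (q^{p-(2j-1)},q^{p+(2j-1)};q^{2p})_\infty^{c_j}$ times a power of $(q^p;q^p)_\infty$, and I would verify this equals $(q^p;q^p)_\infty^p/(q;q)_\infty$ by the standard dissection $(q;q)_\infty = \prod_{r=1}^{p}(q^r;q^p)_\infty / \text{(adjust)}$ — more precisely $(q;q)_\infty/(q^p;q^p)_\infty = \prod_{r=1}^{p-1}(q^r;q^p)_\infty$, so one needs the product of theta contributions to collapse to $\prod_{r=1}^{p-1}(q^r;q^p)_\infty^{-1}$ up to the $(q^p;q^p)_\infty$ bookkeeping. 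That collapse is exactly the statement that each residue class $r\in\{1,\ldots,p-1\}$ modulo $p$ appears with total multiplicity $1$ in the combined numerator/denominator of $\prod_k \mathcal{E}_{a_k,p}$.

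An alternative, more structural route — which I would use as a sanity check and possibly as the main argument for the larger primes where the exponent-matrix arithmetic is tedious — is the following. The left side $q^{\delta_p}(q^p;q^p)_\infty^p/(q;q)_\infty$ is, up to the normalizing power of $q$, the generating function for $p$-cores from \eqref{eq:77}, and it is classically a modular form on $\Gamma_0(p)$ (with character) of weight $\frac{p-1}{2}$; the right side is a product of $\frac{p-1}{2}$ weight-one forms on $\Gamma_1(p)$, hence weight $\frac{p-1}{2}$ on $\Gamma_1(p)$, and by Theorem~\ref{te} it is $\Gamma_0(p)$-invariant up to sign because the $\langle d\rangle$-action merely permutes the factors $\mathcal{E}_{a_k,p}$ among themselves (the full product is a symmetric function of them). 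Both sides therefore lie in the one-dimensional space $\mathcal{M}_{(p-1)/2}(\Gamma_0(p),\chi)$ spanned by the $p$-core form — or, if that space is larger, in a space where agreement of finitely many leading $q$-coefficients forces equality by the Sturm bound. I would then match the constant term: the right side has $q$-valuation $\sum_k \mathrm{ord}_q \mathcal{E}_{a_k,p}$, which from the tables equals $0 + 1 + 1 + \cdots = \delta_p$ (this valuation sum is itself a short check per prime), and leading coefficient $1$ since each normalized $\mathcal{E}_{a_k,p}$ has leading coefficient $1$ by Theorem~\ref{gtd}.

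The main obstacle will be the exponent bookkeeping in the first route — keeping straight the phase factors $\exp(-\pi i b_j(2j-1)/2p)$, the half-integer theta characteristics, and the $q^{2p}$ versus $q^p$ moduli in \eqref{eq:81} — together with verifying, for each of the six primes, that the combined multiplicity of every residue class modulo $p$ in $\prod_k \mathcal{E}_{a_k,p}$ is exactly what the $p$-core product demands. This is finite but genuinely case-by-case; the structural route sidesteps it at the cost of needing the (known) dimension count $\dim\mathcal{M}_{(p-1)/2}(\Gamma_0(p),\chi)$ or an appeal to the Sturm bound to upgrade leading-coefficient agreement to full equality. I expect the cleanest writeup combines the two: use the structural argument to reduce the claim to equality of finitely many coefficients, then cite that the tables of Theorem~\ref{gtd1} have already been verified by $q$-expansion, so the product identity follows by comparing the first $\dim\mathcal{M}_{(p-1)/2}(\Gamma_1(p)) + 1$ terms — a computation of the same nature as, but much smaller than, the one already carried out in the proof of Theorem~\ref{lem:mod}.
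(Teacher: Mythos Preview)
Your first route is the paper's approach, but you are making it far harder than necessary. The paper's entire proof is one sentence: multiply the explicit $q$-product formulas for the generators given in \eqref{ab}--\eqref{iwb}. There is no need to pass through the $\mathfrak{T}_p$ formalism, track phase factors, or worry about column sums of the exponent matrix --- each $\mathcal{E}_{a_k,p}$ is already written out as a concrete infinite product in the Introduction, and when you multiply them the residue classes $r\in\{1,\ldots,p-1\}$ each appear with net exponent $-1$ (not $+1$ as you wrote), the $(q^p;q^p)_\infty$ factors accumulate to the $p$th power, and the leading $q$-powers sum to $\delta_p$. For example at $p=7$ one has $\mathcal{E}_{1,7}\mathcal{E}_{2,7}\mathcal{E}_{3,7}=z\cdot x\cdot(-y)$ and the product collapses in three lines to $q^2(q^7;q^7)_\infty^7/(q;q)_\infty$. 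Your structural second route via $\Gamma_0(p)$-invariance and the Sturm bound is correct and genuinely different from the paper, but it is substantial overkill for what is, once the product forms are in hand, a purely combinatorial cancellation.
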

\begin{proof}
  The claim \eqref{eq:73} for each level $p$ may be deduced by
  utilizing the product representations, given explicitly by \eqref{ab}--\eqref{iwb}, for the generators from Theorem \ref{gtd1}.
\end{proof}

From Theorem \ref{P}, we deduce new divisor sum representations for
$p$-cores, $p$ prime, $5 \le p \le 19$, in terms of $L$-function
values for odd Dirichlet characters at the origin.
\begin{cor} \label{cortc}
 Let $\delta_{p}$ be defined by Theorem \ref{P}. For primes $5 \le p
 \le 19$, and $n \ge \delta_{p}$,
  \begin{align}
    \label{eq:107}
    c_{p}(n- \delta_{p}) = \left ( \frac{2}{p-1} \right )^{\frac{p-3}{2}}&\sum_{r_{1}+\cdots+r_{(p-3)/2}=n}\left(\prod_{k=1}^{\frac{p-3}{2}}\sum_{d_{k}\mid
        r_{k}}\ell_{p}(a_{k+1,p}  \cdot d_{k})\right) \\ &+\left ( \frac{2}{p-1} \right )^{\frac{p-1}{2}}\sum_{ r_{1} + \cdots + r_{(p-1)/2} =
      n} \left ( \prod_{k=1}^{\frac{p-1}{2}} \sum_{d_{k} \mid r_{k}}
    \ell_{p,a_{k,p}}(a_{k,p} \cdot d_{k}) \right ),  \nonumber
  \end{align}
where, $r_{i} \ge 1$, $a_{k,p}$ is defined by Theorem \ref{gtd}, and
for $d \in \Bbb Z$,  we define
\begin{align}
  \label{eq:108}
 \ell_{p}(d) = 2\sum_{{\chi(-1) = -1 \atop primitive\ mod\ p}} \frac{\chi(d)}{L(0, \chi)}.
\end{align}
\end{cor}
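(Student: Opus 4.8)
The plan is to extract Corollary~\ref{cortc} as an elementary consequence of Theorem~\ref{P} by expanding both sides of \eqref{eq:73} as $q$-series and matching coefficients. First I would recall that Theorem~\ref{sf} together with \eqref{eisdef} and the defining relation \eqref{eq:91} gives, for each $k$, a Lambert-series expansion of the normalized Eisenstein sum in the form $\mathcal{E}_{a_{k,p}}(\tau) = \lambda_k + \sum_{n\ge1}\bigl(\sum_{d\mid n}\ell_{p}(a_{k,p}\cdot d)\bigr)q^{n}$, where $\ell_p$ is as in \eqref{eq:108} and the constant term $\lambda_k$ equals $1$ when $a_{k,p}$ is the "principal" index and $0$ otherwise — indeed, by the normalization in Theorem~\ref{gtd} each $\langle a_{k,p}\rangle(\mathcal{E}_p)$ begins with a power of $q$, and by Theorem~\ref{P} exactly one of them (the one corresponding to $\mathcal{E}_{a_1,p}=\mathcal{E}_p$ itself) has nonzero constant term. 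The factor $\left(\tfrac{2}{p-1}\right)$ in \eqref{eq:107} is precisely the normalizing constant from \eqref{defe}; I would track it carefully so that the divisor sums appearing in the corollary are the "raw" ones attached to $\ell_p$ rather than to $\mathcal{E}$.

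Next I would take the product $\prod_{k=1}^{(p-1)/2}\mathcal{E}_{a_{k,p}}(\tau)$ from \eqref{eq:73} and split it as a sum over the two possibilities for the constant term of each factor: there is exactly one factor, say the $k=1$ one with $a_{1,p}$, whose expansion has constant term $1$, so the product is
\begin{align}
  \Bigl(1 + \sum_{n\ge1}b^{(1)}_n q^n\Bigr)\prod_{k=2}^{(p-1)/2}\Bigl(\sum_{n\ge1}b^{(k)}_n q^n\Bigr), \qquad b^{(k)}_n = \tfrac{2}{p-1}\sum_{d\mid n}\ell_p(a_{k,p}\cdot d).
\end{align}
Expanding the leading $1+\cdots$ factor gives two groups of terms: those using the $1$, which produce a $(p-3)/2$-fold Cauchy product over indices $r_1+\cdots+r_{(p-3)/2}=n$ with $a_{k+1,p}$ in the $k$th slot; and those using $\sum_{n\ge1}b^{(1)}_n q^n$, which produce the full $(p-1)/2$-fold Cauchy product over $r_1+\cdots+r_{(p-1)/2}=n$. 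Collecting the powers of $\tfrac{2}{p-1}$ in each group yields $\left(\tfrac{2}{p-1}\right)^{(p-3)/2}$ and $\left(\tfrac{2}{p-1}\right)^{(p-1)/2}$ respectively, exactly as in \eqref{eq:107}. Finally, on the left-hand side of \eqref{eq:73} the coefficient of $q^{n}$ in $q^{\delta_p}(q^p;q^p)_\infty^p/(q;q)_\infty$ is, by \eqref{eq:77} with $t=p$, equal to $c_p(n-\delta_p)$; equating coefficients of $q^{n}$ on both sides gives the stated identity.

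The one genuine subtlety — and the step I expect to require the most care — is the bookkeeping of constant terms and indices: verifying that among the $(p-1)/2$ factors exactly one has a nonzero (and equal to $1$) constant term, and that when that factor's "$1$" is selected the remaining indices are correctly relabelled as $a_{2,p},\dots,a_{(p-1)/2,p}$, i.e. the $a_{k+1,p}$ appearing in the first sum of \eqref{eq:107}. This is immediate from Theorem~\ref{gtd} (the normalization pins down which orbit representative is which) and from the explicit lists of $a_{k,p}$ there, but it is exactly the place where an off-by-one error would creep in. Everything else is a routine manipulation of Lambert series and Cauchy products, so I would state the coefficient-matching cleanly and leave those expansions to the reader, citing \eqref{eq:77}, \eqref{eisdef}, \eqref{eq:91}, and Theorems~\ref{sf}, \ref{gtd}, and~\ref{P}.
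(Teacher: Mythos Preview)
Your proposal is correct and follows essentially the same approach as the paper: expand each $\mathcal{E}_{a_{k,p}}$ as a Lambert series via \eqref{eisdef} and \eqref{defe}, observe that only the $k=1$ factor carries a constant term (this is exactly the content of the paper's display \eqref{eq:109}, valid for $a\neq 1$), split the product accordingly, and read off the Cauchy-product coefficient of $q^n$ against the $p$-core generating function \eqref{eq:77}. The paper's own proof is terser---it simply records \eqref{eq:109} and says ``the right side of \eqref{eq:107} is the Cauchy product of coefficients of the series from \eqref{eq:73}''---but your more explicit tracking of the two groups of terms and the powers of $\tfrac{2}{p-1}$ is exactly the unpacking that justifies that sentence.
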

\begin{proof}
Identity \eqref{eq:107} may be derived from
  \eqref{eq:77}, \eqref{eq:73} and since, for $1 \ne a\in (\Bbb Z / p \Bbb
  Z)^{*}$,
  \begin{align}
    \label{eq:109}
    \mathcal{E}_{a,p}(\tau) = \frac{2}{p-1} \sum_{\chi(-1)= -1}
    \chi(a)E_{1,\chi}(\tau) = \frac{2}{p-1}\sum_{n=1}^{\infty} \left (
      \sum_{d \mid n} \sum_{\chi(-1) = -1} \frac{
        2\chi(ad)}{L(0,\chi)} \right )q^{n}.
  \end{align}
The right side of \eqref{eq:107} is the
Cauchy product of coefficients of the series from \eqref{eq:73}.
\end{proof}
For the primes $p = 5,7,13$, 
the Eisenstein series of weight two and trivial character modulo $p$ have common
representations in terms of the squares of the generators. 
\begin{thm} If $\chi_{1,p}$ is the principal character modulo
  $p$, and $\mathcal{E}_{a,p}$ is given by \eqref{nie}, 
  \begin{align}
    \label{eq:78} 
    1 + 6 &\sum_{n=1}^{\infty} \frac{\chi_{1,5}(n) n q^{n}}{1 - q^{n}}
    = \mathcal{E}_{1,5}^{2} + \mathcal{E}_{2,5}^{2}, \ \    
1 + 4 \sum_{n=1}^{\infty} \frac{\chi_{1,7}(n) n q^{n}}{1 - q^{n}}
    = \mathcal{E}_{1,7}^{2} +
    \mathcal{E}_{2,7}^{2}+\mathcal{E}_{3,7}^{2}, \\ 
&1 + 2 \sum_{n=1}^{\infty} \frac{\chi_{1,13}(n) n q^{n}}{1 - q^{n}}
    = \mathcal{E}_{1,13}^{2} +
    \mathcal{E}_{2,13}^{2}+\mathcal{E}_{3,13}^{2}
    +\mathcal{E}_{4,13}^{2} +
    \mathcal{E}_{5,13}^{2}+\mathcal{E}_{7,13}^{2}.   \label{eq:78b} 
  \end{align}
\end{thm}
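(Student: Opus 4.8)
The plan is to recognize each identity as an equality of two holomorphic modular forms of weight two on $\Gamma_0(p)$ (for $p=5,7,13$) and to verify it by matching finitely many $q$-expansion coefficients, using the dimension bound supplied by Theorem \ref{lem:mod} together with the combinatorial structure of the right-hand side. First I would observe that the left-hand side of each identity is, up to normalization, the weight-two Eisenstein series $E_{2,\chi_{1,p}}(\tau)$ attached to the principal character modulo $p$; by the theory of Eisenstein series for $\Gamma_0(p)$ this is a genuine holomorphic modular form of weight two on $\Gamma_0(p)$, spanning (together with the cusp forms) the space $\mathcal{M}_2(\Gamma_0(p))$ in the relevant range, and it is in particular an element of $\mathcal{M}_2(\Gamma_1(p))$ fixed by the diamond operators $\langle d\rangle$ for all $d \in (\Bbb Z/p\Bbb Z)^*$.

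Next I would analyze the right-hand side. Each $\mathcal{E}_{a,p} = \langle a\rangle(\mathcal{E}_p)$ lies in $\mathcal{M}_1(\Gamma_1(p))$ by Theorem \ref{gtd}, so every summand $\mathcal{E}_{a,p}^2$ lies in $\mathcal{M}_2(\Gamma_1(p))$, and hence so does the sum. The key structural point is that the full sum $\sum_{k} \mathcal{E}_{a_{k,p},p}^2$ is taken over a complete set of representatives $\{a_{k,p}\}$ of $(\Bbb Z/p\Bbb Z)^*/\{\pm1\}$; by Theorem \ref{te} the operator $\langle d\rangle$ permutes the set $\{\mathcal{E}_{a_{k,p},p}\}$ up to sign, so it permutes the set of squares $\{\mathcal{E}_{a_{k,p},p}^2\}$ outright. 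Therefore the sum $\sum_k \mathcal{E}_{a_{k,p},p}^2$ is invariant under all diamond operators, i.e. it descends to an element of $\mathcal{M}_2(\Gamma_0(p))$. Both sides thus live in the finite-dimensional space $\mathcal{M}_2(\Gamma_0(p))$, whose dimension is small: it suffices to check agreement of the $q$-expansions up to order equal to that dimension plus one. Using the explicit series \eqref{eq:109} for each $\mathcal{E}_{a,p}$ in terms of twisted divisor sums, or equivalently the product formulas \eqref{ab}--\eqref{iwb} for the generators, one computes the leading coefficients of $\sum_k \mathcal{E}_{a_{k,p},p}^2$ and compares them with $E_{2,\chi_{1,p}}$; the leading coefficient $1$ comes from the unique normalized element $\mathcal{E}_{1,p} = 1 + O(q)$, while the remaining $\mathcal{E}_{a,p}$ with $a \ne 1$ contribute only at positive order.

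The main obstacle, as in Theorem \ref{lem:mod}, is bookkeeping rather than conceptual: one must correctly identify the multiplier used to normalize $E_{2,\chi_{1,p}}$ (the constants $6, 4, 2$ in the three cases, which are $-24/(p\text{-dependent } L\text{-value})$ after accounting for the removal of the $E_2(q)$ part in the principal-character Eisenstein series for $\Gamma_0(p)$), and carry enough terms of each $q$-expansion to pin down the finitely many coefficients. A secondary subtlety is that the weight-two Eisenstein subspace for $\Gamma_0(p)$ is one-dimensional here, spanned by $p E_2(q^p) - E_2(q)$ up to scaling, so once one knows the right-hand side is a diamond-invariant weight-two form with the correct constant term and the correct vanishing order at the cusps, the identity follows; alternatively, since each square $\mathcal{E}_{a,p}^2$ is a specific monomial in the generators, one can simply invoke the spanning result of Theorem \ref{lem:mod} to reduce the claim to a finite coefficient comparison that a computer algebra system verifies.
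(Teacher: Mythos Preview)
Your approach is correct, and in fact the paper states this theorem without proof, so there is nothing to compare against directly. Your argument can be tightened slightly: once you know that both sides lie in $\mathcal{M}_{2}(\Gamma_{0}(p))$, observe that for $p=5,7,13$ the modular curve $X_{0}(p)$ has genus zero, so $S_{2}(\Gamma_{0}(p))=0$ and $\dim \mathcal{M}_{2}(\Gamma_{0}(p))=1$. Since only $\mathcal{E}_{1,p}=1+O(q)$ contributes to the constant term of the sum of squares, the right-hand side has constant term $1$; the left-hand side is $\tfrac{1}{p-1}\bigl(pE_{2}(q^{p})-E_{2}(q)\bigr)$, also with constant term $1$. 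Two elements of a one-dimensional space with the same nonzero leading coefficient are equal, so no further coefficient checking or Sturm bound is needed. Your remark about the diamond invariance of $\sum_{k}\mathcal{E}_{a_{k,p},p}^{2}$ via Theorem~\ref{te} is exactly the right structural input; note that this also explains why the analogue for $p=11,17,19$ is not listed---there the space $\mathcal{M}_{2}(\Gamma_{0}(p))$ has a cusp form (genus $\ge 1$), and the sum of squares need not be a pure Eisenstein series.
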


We next turn our attention to the common coefficients within the differential systems from
Theorems \ref{th:11} and \ref{th:12}. We will apply the following
standard result with $h = pk/12$.
\begin{lem} \cite[Lemma 2.1]{MR2024734}  \label{afn}
  Suppose that $p$ is a prime and that $f$ is a meromorphic modular
  form of weight $k$ on $\Gamma_{0}(p)$. If $h$ is any constant, then
  the function
  \begin{align}
    \label{eq:110}
    F_{f} := q \frac{d}{dq} f + \left \{ \frac{\frac{k}{12} - h}{p-1}
      \cdot p E_{2}(p \tau)) + \frac{h - \frac{pk}{12}}{p-1} \cdot
      E_{2}(\tau) \right \} \cdot f \in \mathcal{M}_{k+2}(\Gamma_{0}(p)).
  \end{align}
\end{lem}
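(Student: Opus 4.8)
\textbf{Proof plan for Lemma~\ref{afn}.}
The plan is to verify the modularity of $F_f$ directly by checking its transformation law under the two types of generators of $\Gamma_0(p)$ and then confirming holomorphy (meromorphy) at the cusps. The essential analytic input is the quasi-modular transformation of the weight two Eisenstein series: for $\gamma = (a,b;c,d) \in PSL(2,\Bbb Z)$,
\begin{align}
  \label{eq:e2transf}
  E_2(\gamma\tau) = (c\tau+d)^2 E_2(\tau) + \frac{12}{2\pi i}\,c(c\tau+d),
\end{align}
together with the fact that $E_2(p\tau)$ is a weight two quasi-modular form on $\Gamma_0(p)$ whose anomaly is $p$ times the scaled expression with $\tau$ replaced by $p\tau$. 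First I would record that for $\gamma \in \Gamma_0(p)$ one has $p\gamma\tau = \gamma'(p\tau)$ for a matrix $\gamma' = (a, pb; c/p, d)$ in $SL(2,\Bbb Z)$, so that
$$E_2(p\gamma\tau) = (c\tau+d)^2 E_2(p\tau) + \frac{12}{2\pi i}\cdot\frac{c}{p}(c\tau+d).$$
Thus the particular linear combination
$$G(\tau) := \frac{\tfrac{k}{12} - h}{p-1}\, p\,E_2(p\tau) + \frac{h - \tfrac{pk}{12}}{p-1}\, E_2(\tau)$$
satisfies $G(\gamma\tau) = (c\tau+d)^2 G(\tau) + \Delta(\tau)$, where the anomaly $\Delta$ comes out, after collecting the two contributions, to be $-\tfrac{k}{12}\cdot\tfrac{12}{2\pi i}\,c(c\tau+d) = -\tfrac{k}{2\pi i}\,c(c\tau+d)$; the coefficients $\tfrac{k}{12}-h$ and $h - \tfrac{pk}{12}$ were chosen precisely so that the $h$-dependence cancels and the net anomaly has the value needed to match the logarithmic derivative term below.

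Next I would compute the transformation of $q\tfrac{d}{dq}f = \tfrac{1}{2\pi i}\tfrac{d}{d\tau}f$. Differentiating $f(\gamma\tau) = (c\tau+d)^k f(\tau)$ in $\tau$ and using $\tfrac{d}{d\tau}(\gamma\tau) = (c\tau+d)^{-2}$ gives
$$\frac{1}{2\pi i}\frac{d}{d\tau}f\big|_{\gamma\tau} \cdot (c\tau+d)^{-2} = (c\tau+d)^k\,\frac{1}{2\pi i}\frac{d}{d\tau}f(\tau) + \frac{k}{2\pi i}\,c(c\tau+d)^{k-1} f(\tau),$$
so $(q\tfrac{d}{dq}f)(\gamma\tau) = (c\tau+d)^{k+2}(q\tfrac{d}{dq}f)(\tau) + \tfrac{k}{2\pi i}c(c\tau+d)^{k+1} f(\tau)$. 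Adding $G(\gamma\tau)f(\gamma\tau) = \big[(c\tau+d)^2 G(\tau) + \Delta(\tau)\big](c\tau+d)^k f(\tau)$, the two anomalous terms are $\tfrac{k}{2\pi i}c(c\tau+d)^{k+1}f$ and $\Delta(\tau)(c\tau+d)^k f = -\tfrac{k}{2\pi i}c(c\tau+d)^{k+1}f$, which cancel exactly. Hence $F_f(\gamma\tau) = (c\tau+d)^{k+2}F_f(\tau)$ for every $\gamma \in \Gamma_0(p)$, i.e. $F_f$ transforms with weight $k+2$.

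Finally I would address the cusp conditions. Since $f$ is only assumed meromorphic (as a modular form, i.e. meromorphic at the cusps), $F_f$ inherits at worst meromorphic behavior at each cusp of $\Gamma_0(p)$: $q\tfrac{d}{dq}$ sends a Laurent-type $q$-expansion to another one of the same type, and $E_2(\tau)$, $E_2(p\tau)$ are holomorphic at the cusps with bounded $q$-expansions, so the product $Gf$ is again meromorphic at the cusps. Therefore $F_f$ lies in the space of meromorphic modular forms of weight $k+2$ on $\Gamma_0(p)$, which is what $\mathcal{M}_{k+2}(\Gamma_0(p))$ denotes in this context. The main (though entirely routine) obstacle is bookkeeping: one must carefully track the factors of $p$ arising from $E_2(p\tau)$ under $\gamma \in \Gamma_0(p)$ and confirm that the coefficients $(\tfrac{k}{12}-h)/(p-1)$ and $(h - \tfrac{pk}{12})/(p-1)$ are exactly the ones that both eliminate the free parameter $h$ from the anomaly and produce the constant $-\tfrac{k}{2\pi i}$ needed to annihilate the derivative's anomalous term; once the algebra of $\eqref{eq:e2transf}$ is set up correctly the cancellation is forced.
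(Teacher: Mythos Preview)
Your argument is correct: the direct verification that the two anomalous terms cancel is exactly the standard proof of this fact. Note, however, that the paper does not supply its own proof of this lemma; it is quoted verbatim from \cite[Lemma~2.1]{MR2024734} and used as a black box, so there is nothing in the paper to compare against beyond the citation. Your computation is precisely the routine check one would expect Ahlgren's proof to contain.
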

The proof of Lemma \ref{afn} from \cite{MR2024734} shows that the
Lemma remains true for the subgroup $\Gamma_{1}(p)$ of
$\Gamma_{0}(p)$. Moreover, if $f, g \in
\mathcal{M}_{k}(\Gamma_{1}(p))$, and $\gamma \in \Gamma_{0}(p)$ with $\langle \gamma \rangle (f) = g$, then
$\langle \gamma \rangle (F_{f}) = F_{g}$. Thus, for $a \in (\Bbb Z / p
\Bbb Z)^{*}$, $( q \frac{d}{dq}
\mathcal{E}_{a,p}(\tau) )/\mathcal{E}_{a,p}(\tau)-
\frac{pE_{2}(p\tau)}{12} \in \mathcal{M}_{2}(\Gamma_{1}(p))$.
Relevant basis expansions are encoded by Theorem
\ref{bu}. These subsume the systems of level $5$ and $7$
of Theorems \ref{th:11}--\ref{th:12} and new coupled systems of
level $11\le p \le 19$. The differential equations are permuted by
$\Gamma_{0}(p)$, with invariance subgroup $(\Bbb Z / p \Bbb
Z)^{*}/\{\pm 1\}$. 
\begin{thm} \label{bu} Let $\mathcal{E}_{a,p}$ be given as in
  \eqref{nie} and $a_{k,p}$ as in Theorem \ref{gtd}, and define 
\begin{align}
  \mathcal{F}_{5}(x_{1}, x_{2}) &= - 5 x_{1}^{2} +66x_{1}x_{2}+ 7 x_{2}^{2},\\ \mathcal{F}_{7}(x_{1}, x_{2},x_{3}) &= - 7 x_{1}^{2} +5x_{2}^{2}+ 5 x_{3}^{2}-20x_{2}x_{3}+52x_{1}x_{2}, \\ 
\mathcal{F}_{11}(x_{1}, x_{2},\cdots, x_{5}) &=
-11 x_{1}^{2}+x_{2}^{2}+13
x_{3}^{2}+x_{4}^{2}+x_{5}^{2} \\ &+34 x_{3} x_{1} +42 x_{2} x_{4}-40 x_{2} x_{5}+38
x_{3} x_{5}-10 x_{4} x_{5}, \nonumber \\ 
\mathcal{F}_{13}(x_{1}, x_{2}, \cdots ,x_{6}) &=
-13 x_{1}^2+11
x_{2}^{2}-x_{3}^{2}-x_{4}^{2}-x_{5}^{2}+11 x_{6}^{2}+16 x_{2}x_{1}  \\
& +38 x_{1} x_{5}+2 x_{2} x_{3}-20
x_{2} x_{4}+40 x_{2}x_{5}-8 x_{2} x_{6}+14 x_{4} x_{6}, \nonumber \\ 
\mathcal{F}_{17}(x_{1}, x_{2}, \ldots, x_{8}) &=
-17 x_1^2+19 x_2^2+7 x_3^2-5 x_4^2-5 x_5^2-5 x_6^2-5 x_7^2+19 x_8^2
\\ &-12
x_2 x_3+54 x_2 x_5+12 x_5 x_6+30 x_2 x_7-42 x_5 x_7  \nonumber \\ &-60 x_2 x_8+12 x_3
x_8-54 x_5 x_8-12 x_7 x_8, \nonumber \\
 \mathcal{F}_{19}(x_{1}, x_{2}, \ldots, x_{9}) &=
-19 x_1^2+41 x_2^2+5 x_3^2-7 x_4^2-7 x_5^2+5 x_6^2+5 x_7^2-7 x_8^2-7
x_9^2 \nonumber \\ &+16 x_2 x_5-36 x_3 x_5+8 x_4 x_5+32 x_5 x_6+12 x_4
x_7-16 x_5 x_7 \\ &+28 x_5 x_8-12 x_6 x_8-4 x_2 x_9+68 x_3 x_9+16 x_5
x_9-28 x_8 x_9. \nonumber 
\end{align}
Then, the generators satisfy a system of $(p-1)/2$ differential equations subsumed by
  \begin{align} \label{some}
\frac{12}{\mathcal{E}_{a,p}} q \frac{d}{dq} \mathcal{E}_{a,p} &=
\mathcal{F}_{p}(\mathcal{E}_{a\cdot a_{1},p}, \mathcal{E}_{a\cdot a_{2},p},\ldots \mathcal{E}_{a\cdot a_{(p-1)/2},p}) +pE_{2}(p\tau), \quad  a \in (\Bbb Z / p \Bbb Z)^{*}.
\end{align}
\end{thm}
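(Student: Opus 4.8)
\textbf{Proof proposal for Theorem \ref{bu}.}
The plan is to reduce the entire family of coupled systems to a finite linear-algebra verification, one prime at a time, by exploiting the fact that, by Lemma \ref{afn} (in its $\Gamma_{1}(p)$ form noted immediately after its statement), the logarithmic derivative
\[
L_{a}(\tau):=\frac{12}{\mathcal{E}_{a,p}(\tau)}\,q\frac{d}{dq}\mathcal{E}_{a,p}(\tau)-pE_{2}(p\tau)
\]
is a weight two holomorphic modular form on $\Gamma_{1}(p)$, for each $a\in(\Bbb Z/p\Bbb Z)^{*}$. Indeed, applying the Lemma with $k=1$, $h=p/12$ makes the bracketed term collapse to $\tfrac{1}{12}pE_{2}(p\tau)$, so $F_{\mathcal{E}_{a,p}}=q\frac{d}{dq}\mathcal{E}_{a,p}+\tfrac{1}{12}pE_{2}(p\tau)\,\mathcal{E}_{a,p}\in\mathcal{M}_{2}(\Gamma_{1}(p))$, and dividing by $\mathcal{E}_{a,p}$ and clearing constants gives $L_{a}\in\mathcal{M}_{2}(\Gamma_{1}(p))$ since $\mathcal{E}_{a,p}$ is a nonvanishing (on $\mathfrak{H}$) weight one form. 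By Theorem \ref{lem:mod}, the squares and pairwise products $\mathcal{E}_{a_{i},p}\mathcal{E}_{a_{j},p}$ span $\mathcal{M}_{2}(\Gamma_{1}(p))$; hence there exist unique constants expressing $L_{1}$ as a quadratic form $\mathcal{F}_{p}$ in the generators, and these constants are what the theorem records.

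First I would treat the base case $a=1$ directly: expand $L_{1}$ to sufficiently many $q$-coefficients using the product formulas \eqref{ab}--\eqref{iwb} for the $\mathcal{E}_{a_{k},p}$ (equivalently the eta/theta-quotient descriptions of Theorem \ref{gtd1}) together with the logarithmic derivative of the Jacobi triple product, which by \eqref{another1} reduces each $q\frac{d}{dq}\log\mathcal{E}_{a_{k},p}$ to an explicit Lambert series. Matching against the monomial basis for $\mathcal{M}_{2}(\Gamma_{1}(p))$ selected at the end of Section \ref{huj} (by row-reducing $H_{p,2}$) pins down $\mathcal{F}_{p}$ as the claimed quadratic form, and the match need only be checked out to $\dim\mathcal{M}_{2}(\Gamma_{1}(p))$ coefficients since both sides are already known to lie in that finite-dimensional space. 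This establishes \eqref{some} for $a=1$.

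The extension to arbitrary $a$ is then formal. For $\gamma\in\Gamma_{0}(p)$ with lower-right entry $d\equiv a\pmod p$, the remark after Lemma \ref{afn} gives $\langle\gamma\rangle(F_{\mathcal{E}_{1,p}})=F_{\mathcal{E}_{d,p}}$, while $\langle d\rangle$ commutes with $q\frac{d}{dq}$ up to the weight-factor cocycle in exactly the way needed so that $\langle d\rangle(L_{1})=L_{d}$; on the other side, by \eqref{eq:71} the diamond operator permutes $\{\mathcal{E}_{a_{k},p}\}$ up to sign via multiplication by $a$ in $(\Bbb Z/p\Bbb Z)^{*}/\{\pm1\}$, so $\langle d\rangle$ applied to $\mathcal{F}_{p}(\mathcal{E}_{a_{1},p},\ldots,\mathcal{E}_{a_{(p-1)/2},p})$ yields $\mathcal{F}_{p}(\mathcal{E}_{d\cdot a_{1},p},\ldots,\mathcal{E}_{d\cdot a_{(p-1)/2},p})$ — the signs cancel because $\mathcal{F}_{p}$ is quadratic, and the diamond action also fixes $pE_{2}(p\tau)$. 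Applying $\langle d\rangle$ to the $a=1$ identity thus produces \eqref{some} for general $a$, and simultaneously exhibits the asserted $\Gamma_{0}(p)$-equivariance with invariance subgroup $(\Bbb Z/p\Bbb Z)^{*}/\{\pm1\}$.

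The main obstacle is computational rather than conceptual: for $p=17,19$ one must carry the $q$-expansions of the nine (resp.\ eight) generators far enough — through roughly $\dim\mathcal{M}_{2}(\Gamma_{1}(p))$ coefficients, i.e.\ past $q^{24}$ for $p=19$ — and then solve the resulting linear system to read off the coefficients of $\mathcal{F}_{p}$; verifying that the permuted systems close correctly under all of $(\Bbb Z/p\Bbb Z)^{*}/\{\pm1\}$ is then automatic from the equivariance argument but should be spot-checked on a single nontrivial $\gamma$ per prime as a consistency test. A secondary point requiring care is the normalization of the $E_{2}(p\tau)$ term: one must confirm that the choice $h=pk/12$ in Lemma \ref{afn} is precisely what eliminates the $E_{2}(\tau)$ contribution and leaves $pE_{2}(p\tau)$ with coefficient matching \eqref{dpp} and the last display of Theorem \ref{th:12}, which is where the claim that those equations are equivalent to the first Ramanujan equation \eqref{rdiff1} enters.
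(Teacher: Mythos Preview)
Your approach is exactly the one the paper uses: the paragraph immediately preceding Theorem \ref{bu} applies Lemma \ref{afn} with $h=pk/12$ to conclude that $(q\tfrac{d}{dq}\mathcal{E}_{a,p})/\mathcal{E}_{a,p}-\tfrac{p}{12}E_{2}(p\tau)\in\mathcal{M}_{2}(\Gamma_{1}(p))$, records the equivariance $\langle\gamma\rangle(F_{f})=F_{\langle\gamma\rangle f}$, and then states that the theorem simply encodes the resulting basis expansions --- i.e., a single finite linear-algebra computation at $a=1$ propagated to all $a$ by the diamond action, just as you describe.

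Two small slips to clean up: with $k=1$ and $h=p/12$ the bracket in Lemma \ref{afn} collapses to $-\tfrac{p}{12}E_{2}(p\tau)$ (not $+$), and $F_{\mathcal{E}_{a,p}}$ lies in $\mathcal{M}_{3}(\Gamma_{1}(p))$, not $\mathcal{M}_{2}$ --- it is the quotient $F_{f}/f$ that has weight $2$. Also, ``nonvanishing on $\mathfrak{H}$'' does not by itself guarantee holomorphy of $L_{a}$ at the cusps; you need that each $\mathcal{E}_{a,p}$ has only finite-order zeros at cusps (immediate from the eta/theta-quotient form of Theorem \ref{gtd1}), so that the logarithmic derivative stays bounded there. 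Finally, the ``signs cancel because $\mathcal{F}_{p}$ is quadratic'' remark is unnecessary: $\langle d\rangle\mathcal{E}_{a_{k},p}=\mathcal{E}_{d\cdot a_{k},p}$ on the nose by multiplicativity of the diamond operator, and the signs are already absorbed into the definition $\mathcal{E}_{-a,p}=-\mathcal{E}_{a,p}$, exactly as in the worked $p=11$ example following the theorem.
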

%
A differential system for a full set of generators for
$\mathcal{M}(\Gamma_{1}(p))$ may be formulated from these
relations \eqref{some}, and the group action formulation \eqref{eq:71}. For $p = 11$,
\begin{align}
\frac{12}{\mathcal{E}_{1,11}} q \frac{d}{dq} \mathcal{E}_{2,11} &=
\mathcal{F}_{11}(\mathcal{E}_{1,11}, \mathcal{E}_{2,11},\mathcal{E}_{3,11}, \mathcal{E}_{5,11}, \mathcal{E}_{7,11}) +11
P\left(q^{11}\right ), \\ 
\frac{12}{\mathcal{E}_{2,11}} q \frac{d}{dq} \mathcal{E}_{2,11} &=
\mathcal{F}_{11}(\mathcal{E}_{2,11}, -\mathcal{E}_{7,11},
-\mathcal{E}_{5,11}, -\mathcal{E}_{1,11}, \mathcal{E}_{3,11}) +11
P\left(q^{11}\right), \\  
\frac{12}{\mathcal{E}_{3,11}} q \frac{d}{dq} \mathcal{E}_{3,11} &=
\mathcal{F}_{11}(\mathcal{E}_{3,11}, -\mathcal{E}_{5,11},
-\mathcal{E}_{2,11}, -\mathcal{E}_{7,11}, -\mathcal{E}_{1,11}) +11 P\left(q^{11}\right),
\\  
\frac{12}{\mathcal{E}_{5,11}} q \frac{d}{dq} \mathcal{E}_{5,11} &=
\mathcal{F}_{11}(\mathcal{E}_{5,11}, -\mathcal{E}_{1,11},
-\mathcal{E}_{7,11}, \mathcal{E}_{3,11}, \mathcal{E}_{2,11}) +11
P\left(q^{11}\right),
\\  
\frac{12}{\mathcal{E}_{7,11}} q \frac{d}{dq} \mathcal{E}_{7,11} &=
\mathcal{F}_{11}(\mathcal{E}_{7,11}, \mathcal{E}_{3,11},
-\mathcal{E}_{1,11}, \mathcal{E}_{2,11}, \mathcal{E}_{5,11}) +11 P\left(q^{11}\right).
\end{align}

\section{Quadratic relations and Klein's automorphism groups} \label{s_higher}
The following results relate permuted generators for
$\mathcal{M}(\Gamma_{1}(p))$ for primes $5 \le p \le 19$ to Klein's
classical automorphism groups and extensions. We begin with symmetries of the icosahedron. Define the normalized theta constant of order $p$ and index $k$ by
\begin{align}
  \phi_{p,k}(\tau) = \exp \left ( - \frac{(2k-1)\pi i}{2p} \right ) \varphi_{p,k}(\tau).
\end{align}
To study the geometry of $X(N)$, the extended upper half plane modulo the
principal congruence subgroup $\Gamma(N)$, Klein devised vector-valued
modular forms and used them to formulate explicit maps into the
relevant moduli spaces. At level $p = 5$, Klein considered
$V_{5}(\tau) = [\phi_{5,2}(\tau), \phi_{5,1}(\tau)]^{T}$ and showed
$v_{5}$ satisfies transformation formulas under generators for the full modular group
\begin{align}
 V_{5}(S\tau) := V(-1/\tau) =  \tau^{1/2}\rho_{5}(S)V_{5}(\tau),\quad
 V_{5}(T\tau): = V_{5}(\tau +1)= \rho_{5}(T)V_{5}(\tau)
\end{align}
Klein showed that $\rho_{5}:PSL(2, \Bbb Z) \to PLG(2, \Bbb C)$ is a
representation for the automorphism group of the projected
icosahedron. Namely, $\langle \rho_{5}(S), \rho_{5}(T)\rangle = PSL(2,
\Bbb F_{5}) = A_{5}$. Our symmetric representations for modular forms
in terms of $\mathcal{E}_{1,5}, \mathcal{E}_{2,5}$ make use of the
fact that, up to a constant multiple, $\rho_{5}\! \! \mid_{\Gamma_{0}(5)}$ induces a permutation of $\phi_{5,1}, \phi_{5,2}$. A similar analysis at level $p=7$ gives rise to the Klein quartic curve defined by the locus
\begin{align} \label{wya}
  a^{3}b + b^{3} c+c^{3}a = 0, \qquad [a:b:c] \in \Bbb C \Bbb P^{2}.
\end{align}
At levels $p=7,11$, certain quadratic relations between the $\Gamma_{0}(p)$-permuted generators for $\mathcal{M}(\Gamma_{1}(p))$ define the relevant curves in complex projective space isomorphic to those defining the modular curve $X(p)$. At level seven, apply Theorem \ref{gtd1} to observe that
\begin{align}
  \mathcal{E}_{1,7} = \frac{\phi_{7,1}}{\phi_{7,3}^{2}},\quad  \mathcal{E}_{2,7} = \frac{\phi_{7,2}}{\phi_{7,1}^{2}},\quad  \mathcal{E}_{3,7} = \frac{\phi_{7,3}}{\phi_{7,2}^{2}} = - \mathcal{E}_{4,7}.
\end{align}
With $a = \phi_{7,1}, b = \phi_{7,2}, c = - \phi_{7,3}$, Theorem \ref{eqt} provides a parameterization for \eqref{wya}. 
\begin{thm} \label{eqt} Define $\mathcal{E}_{a,p} =
  \mathcal{E}_{a,p}(\tau)$ as in
  \eqref{nie}. Then 
  \begin{align}
    \label{eq:10}
     \mathcal{E}_{2,7}  \mathcal{E}_{4,7} +  \mathcal{E}_{2,7} \mathcal{E}_{1,7}+  \mathcal{E}_{4,7} \mathcal{E}_{1,7}= 0. 
  \end{align}
\end{thm}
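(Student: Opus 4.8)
The plan is to establish \eqref{eq:10} by reducing it to a known theta-function identity, exploiting the explicit theta-quotient representations for the level-seven generators. First I would substitute the representations
$\mathcal{E}_{1,7} = \phi_{7,1}/\phi_{7,3}^{2}$, $\mathcal{E}_{2,7} = \phi_{7,2}/\phi_{7,1}^{2}$, and $\mathcal{E}_{4,7} = -\mathcal{E}_{3,7} = -\phi_{7,3}/\phi_{7,2}^{2}$ (from Theorem \ref{gtd1}, rewritten in terms of the normalized constants $\phi_{7,k}$) into the left side of \eqref{eq:10}. Clearing the common denominator $\phi_{7,1}^{2}\phi_{7,2}^{2}\phi_{7,3}^{2}$, the identity \eqref{eq:10} becomes equivalent to the homogeneous cubic relation
\begin{align}
  \label{eq:kq7}
  -\phi_{7,1}^{3}\phi_{7,2} + \phi_{7,2}^{3}\phi_{7,3} - \phi_{7,1}\phi_{7,3}^{3} = 0,
\end{align}
which is exactly the Klein quartic locus \eqref{wya} under the substitution $a = \phi_{7,1}$, $b = \phi_{7,2}$, $c = -\phi_{7,3}$ already indicated in the text.

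Next I would prove \eqref{eq:kq7} itself. The cleanest route is to specialize the theta addition formula \eqref{eq:13}: choosing $x = \pi\tau$, $y = 2\pi\tau$ (with nome $q^{7}$) and using the product formula \eqref{pw} together with \eqref{yi1} to evaluate each factor $\theta_{1}(\pi\tau \mid q^{7})$, $\theta_{1}(2\pi\tau \mid q^{7})$, $\theta_{1}(\pi\tau \mid q^{7})$, $\theta_{1}(3\pi\tau \mid q^{7})$ and $\theta_{1}'(0 \mid q^{7})$ in terms of the infinite products $(q^{k}, q^{7-k}, q^{7}; q^{7})_{\infty}$, hence in terms of $\phi_{7,1}, \phi_{7,2}, \phi_{7,3}$ and $\eta(7\tau)$. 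The left side of \eqref{eq:13} contributes the term $(\theta_{1}'/\theta_{1})'(\pi\tau) - (\theta_{1}'/\theta_{1})'(2\pi\tau)$, which by \eqref{another1} has a $q$-expansion that is again a $\Bbb Z$-linear combination of the $(\theta_{1}'/\theta_{1})(k\pi\tau \mid q^{7})$; matching this against the product side yields a polynomial relation among the $\phi_{7,k}$, and a short bookkeeping check confirms it is precisely \eqref{eq:kq7}.

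Alternatively — and this is probably the more robust way to present it — I would observe that both sides of \eqref{eq:kq7} are holomorphic modular forms of the same weight on $\Gamma_{1}(7)$ (each $\phi_{7,k}$ being weight $1/2$, the whole expression is weight $2$, or after clearing we are in $\mathcal{M}_{3}(\Gamma_{1}(7))$ when phrased via the $\mathcal{E}_{k,7}$), so the identity reduces to a finite $q$-expansion check: by the valence formula it suffices to verify agreement of the two sides up to $O(q^{M})$ for an explicit bound $M$ depending only on $\dim \mathcal{M}_{3}(\Gamma_{1}(7)) = 7$. Using the product formulas \eqref{eq:68} for $x, y, z$ (note $\mathcal{E}_{1,7} = z$, $\mathcal{E}_{2,7} = x/q \cdot q = x$ up to the stated normalization, $\mathcal{E}_{4,7} = y$), this is a routine power-series computation. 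The main obstacle is the bookkeeping in the first approach: tracking the powers of $q$ and the phase factors $\exp(-(2k-1)\pi i/(2p))$ that relate $\varphi_{p,k}$ to $\phi_{p,k}$ and to the raw products $(q^{k}, q^{p-k}, q^{p}; q^{p})_{\infty}$, so that the specialization of \eqref{eq:13} lands exactly on the symmetric cubic \eqref{eq:kq7} rather than a scalar multiple of it; the second, $q$-expansion approach sidesteps this entirely at the cost of invoking the dimension count already recorded before Theorem \ref{lem:mod}.
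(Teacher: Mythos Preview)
Your reduction of \eqref{eq:10} to the cubic relation \eqref{eq:kq7} in the $\phi_{7,k}$ is correct and is exactly the observation the paper records immediately before the theorem. The difficulty is in your proposed proof of \eqref{eq:kq7}.

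The first route has a genuine gap. A single specialization of \eqref{eq:13} at $(x,y) = (\pi\tau, 2\pi\tau)$ does \emph{not} produce a polynomial relation among the $\phi_{7,k}$: the left side $(\theta_1'/\theta_1)'(\pi\tau\mid q^7) - (\theta_1'/\theta_1)'(2\pi\tau\mid q^7)$ is a difference of weight-two Lambert series (essentially values of $\wp$), and it is \emph{not} a $\Bbb Z$-linear combination of the weight-one quantities $(\theta_1'/\theta_1)(k\pi\tau\mid q^7)$ as you claim. Differentiating \eqref{another1} in $z$ gives $-\csc^{2}z + 8\sum nq^{n}\cos(2nz)/(1-q^{n})$, which is of a different shape entirely. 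So one specialization yields only an equation of the form $D_{i}-D_{j} = \mathcal{E}_{a,7}\mathcal{E}_{b,7}$ for a certain auxiliary series $D_{i}$, not the Klein quartic.

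The paper's device is to take \emph{three} specializations of \eqref{eq:13}, at $(x,y) = (\pi\tau,2\pi\tau)$, $(\pi\tau,3\pi\tau)$, $(2\pi\tau,3\pi\tau)$, obtaining
\[
D_{2}-D_{1} = \mathcal{E}_{3,7}\mathcal{E}_{1,7},\qquad D_{1}-D_{3} = \mathcal{E}_{2,7}\mathcal{E}_{1,7},\qquad D_{3}-D_{2} = \mathcal{E}_{2,7}\mathcal{E}_{3,7},
\]
and then simply adding: the left sides telescope to zero, giving \eqref{eq:10} directly (recall $\mathcal{E}_{4,7}=-\mathcal{E}_{3,7}$). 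This is the missing idea in your first approach.

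Your second route (Sturm bound) is valid and is a genuinely different, more computational argument; note however that the relevant space is $\mathcal{M}_{2}(\Gamma_{1}(7))$, not $\mathcal{M}_{3}$, since each $\mathcal{E}_{a,7}$ has weight one.
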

\begin{proof}

Replace $q$ by $q^{7}$ in \eqref{eq:13}, and make the respective
substitutions 
\begin{align}
  \label{eq:133}
  (x, y) = (\pi \tau, 2 \pi \tau), \quad (\pi \tau, 3 \pi \tau), \quad
  (2 \pi \tau, 3 \pi \tau)
\end{align}
in the resultant identities to derive, from \eqref{another},
\eqref{jtp}, and \eqref{yi1}, 
\begin{align} \label{kg} 
  D_{2}(q) - D_{1}(q) &=  \mathcal{E}_{3,7} \mathcal{E}_{1,7},  \quad D_{1}(q) - D_{3}(q) = \mathcal{E}_{2,7}
  \mathcal{E}_{1,7}, \\   & D_{3}(q) - D_{2}(q) =  \mathcal{E}_{2,7}  \mathcal{E}_{3,7}, \label{kg1}
\end{align}
where $D_{1}(q)$, $D_{2}(q)$, and $D_{3}(q)$ take the form
\begin{align} \label{rn}
  D_{1}(q) = \sum_{n=1}^{\infty} \frac{n q^{n}}{1 - q^{7n}} +&
  \sum_{n=1}^{\infty} \frac{nq^{6n}}{1 - q^{7n}},\ \ D_{2}(q) =  \sum_{n=1}^{\infty} \frac{n q^{2n}}{1 - q^{7n}} + \sum_{n=1}^{\infty} \frac{nq^{5n}}{1 - q^{7n}}, \\ 
 &D_{3}(q) =  \sum_{n=1}^{\infty} \frac{n q^{3n}}{1 - q^{7n}} + \sum_{n=1}^{\infty} \frac{nq^{4n}}{1 - q^{7n}}. \label{rn1}
\end{align}
Identity \eqref{eq:10} follows immediately from \eqref{kg}--\eqref{kg1}.
\end{proof}

The level $11$ analogue of the Klein quartic is the Klein cubic
\cite{MR522700,klein_el}, \cite[Band II]{MR0183872} 
\begin{align}
  \label{eq:64}
  v^{2} w + w^{2} x + x^{2} y + y^{2} z + z^{2} v = 0, \qquad [v,w,x,y,z] \in \Bbb C \Bbb P^{4}.
\end{align}
The curve $X(11)$ is isomorphic to the locus in $\Bbb C \Bbb P^{4}$ consisting of $[v,w,x,y,z]$ with \cite{MR1416724,MR1510086}
\begin{align}
  \label{eq:112}
  Rank \begin{pmatrix}
    w & v & 0 & 0 & z \\ v & x & w & 0 & 0 \\ 0 & w & y & x & 0 \\ 0 &
    0 & x & z & y \\ z & 0 & 0 & y & v
  \end{pmatrix} = 3.
\end{align}
The following theorem features relations between the level eleven generators
equivalent to theta function parameterizations for the
second and third minors along the top row. 
\begin{thm} \label{ghl} Let $\mathcal{E}_{a,p} =
  \mathcal{E}_{a,p}(\tau) $ be defined as in
  \eqref{nie}. Then 
  \begin{align}
    \label{eq:111}
   \mathcal{E}_{3,11}\mathcal{E}_{7,11}  +
   \mathcal{E}_{1,11}\mathcal{E}_{7,11} -
   \mathcal{E}_{3,11}\mathcal{E}_{5,11} -
   \mathcal{E}_{5,11}\mathcal{E}_{7,11} = 0, \\
 \mathcal{E}_{1,11}\mathcal{E}_{3,11}
 -\mathcal{E}_{1,11}\mathcal{E}_{5,11} +
 \mathcal{E}_{5,11}\mathcal{E}_{7,11}  = 0.    \label{eq:111a}
   \end{align}
\end{thm}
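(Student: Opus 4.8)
The plan is to follow the template of Theorem~\ref{eqt}, working from the classical theta identity~\eqref{eq:13} with $q$ replaced by $q^{11}$ and specializing $(x,y)$ to pairs of half-periods $(j\pi\tau,k\pi\tau)$, $1\le j<k\le 5$. On the left of~\eqref{eq:13} one expands via~\eqref{another1}--\eqref{another}, using $-\csc^{2}(j\pi\tau)=4\sum_{n\ge1}nq^{jn}$ together with $q^{(11+j)n}/(1-q^{11n})=q^{jn}/(1-q^{11n})-q^{jn}$; the elementary terms then telescope, leaving $4\bigl(D_{j}(q)-D_{k}(q)\bigr)$, where, in analogy with~\eqref{rn}--\eqref{rn1},
\begin{align}
D_{m}(q)=\sum_{n=1}^{\infty}\frac{nq^{mn}}{1-q^{11n}}+\sum_{n=1}^{\infty}\frac{nq^{(11-m)n}}{1-q^{11n}},\qquad 1\le m\le 5.
\end{align}
On the right of~\eqref{eq:13}, applying~\eqref{jtp}, the evaluation~\eqref{pw}, and~\eqref{yi1} turns the quotient into a power of $q$ times a ratio of infinite products in $q^{11}$. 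For those pairs $(j,k)$ in which a numerator factor cancels a denominator factor, namely when $k-j$ or $\min\{j+k,\,11-j-k\}$ equals $j$ or $k$, this ratio reduces to a single product of two of the five generators, so that~\eqref{eq:13} becomes an identity $D_{j}(q)-D_{k}(q)=\pm\,\mathcal{E}_{a,11}(\tau)\mathcal{E}_{b,11}(\tau)$ with $a,b\in\{1,2,3,5,7\}$, the pair $(a,b)$ being read off from the product formulas of Theorem~\ref{gtd1}.

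Collecting these identities, I would then use that the five series $D_{1},\dots,D_{5}$ have only four linearly independent pairwise differences: their left-hand sides are linearly dependent, and each vanishing linear combination of them yields a quadratic relation among the generators. I expect the three-term relation~\eqref{eq:111a} to arise from a three-cycle $(D_{a}-D_{b})+(D_{b}-D_{c})+(D_{c}-D_{a})=0$, and the four-term relation~\eqref{eq:111} from a combination of two such identities whose $D$-parts cancel. Any relation not obtained directly in this way can be recovered from the $\Gamma_{0}(11)$-action: by~\eqref{eq:71}, $\langle d\rangle$ permutes the generators up to sign for $d\in(\Bbb Z/11\Bbb Z)^{*}$, so applying $\langle d\rangle$ to one established relation supplies all of its images. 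As a fallback, since both sides of~\eqref{eq:111}--\eqref{eq:111a} are weight-two forms on $\Gamma_{1}(11)$ and $\dim\mathcal{M}_{2}(\Gamma_{1}(11))=10$, comparing the first eleven $q$-coefficients verifies each relation outright. Finally, under a suitable identification of $[v,w,x,y,z]$ with $(\mathcal{E}_{1,11},\mathcal{E}_{2,11},\mathcal{E}_{3,11},\mathcal{E}_{5,11},\mathcal{E}_{7,11})$, one checks that~\eqref{eq:111} and~\eqref{eq:111a} give precisely the theta parameterizations of the second and third $4\times 4$ minors along the top row of the matrix in~\eqref{eq:112}, up to a nonvanishing theta factor, as asserted before the theorem.

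The principal obstacle is the sign and $q$-power bookkeeping. Signs enter from the parity $\theta_{1}(-z\mid q^{11})=-\theta_{1}(z\mid q^{11})$, from the reductions~\eqref{eq:5} needed to bring $\theta_{1}((j+k)\pi\tau\mid q^{11})$ back to a standard half-period when $j+k>5$, from the factors of $i$ and powers of $q$ in~\eqref{pw} and~\eqref{yi1}, and above all from the sign conventions absorbed into $\mathcal{E}_{a,11}=\langle a\rangle(\mathcal{E}_{11})$ by the normalization of Theorem~\ref{gtd}, of which $\mathcal{E}_{3,7}=-\mathcal{E}_{4,7}$ at level seven is the prototype. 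All of these are pinned down by matching the leading $q$-power and its coefficient on each side. The bookkeeping is heavier than at level seven because there are more half-periods and fewer of the substitutions yield a clean product of two generators, so part of the work is organizing which combinations of substitutions, together with the group action, actually isolate the two stated relations.
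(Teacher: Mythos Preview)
Your proposal is correct and follows essentially the same approach as the paper. The paper's proof is a single sentence stating that the identities ``may be derived directly as special cases of \eqref{eq:13} or from Theorem \ref{gtd} and the $q$-expansions of the generators $\mathcal{E}_{i,11}$''; you have simply fleshed out both of these options, carrying the level-seven template over to level eleven and noting the $q$-expansion comparison as a fallback.
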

\begin{proof}
  The claimed identities may be derived directly as special cases of
  \eqref{eq:13} or from Theorem \ref{gtd} and the $q$-expansions of the generators $\mathcal{E}_{i,11}$ for $\mathcal{M}(\Gamma_{1}(11))$.
\end{proof}
%
%
From Theorem \ref{gtd1}, \eqref{eq:111}, and \eqref{eq:111a}, we deduce the
quartic theta constant identities
\begin{align}
  \label{eq:119}
  \phi_{11,3} \phi_{11,2}^3-\phi_{11,3}^2 \phi_{11,5}
  \phi_{11,2}+\phi_{11,1} \phi_{11,4} \phi_{11,5} \phi_{11,2}-\phi_{11,1}^2
  \phi_{11,3}^2 = 0, \\ 
\phi_{11,2} \phi_{11,4} \phi_{11,1}^2-\phi_{11,1} \phi_{11,2}
\phi_{11,3}^2+\phi_{11,3} \phi_{11,4} \phi_{11,5}^2 = 0.
\end{align}
These are theta function parameterizations for the zeros of the second and third minors along the
top row of the matrix from \eqref{eq:112} 
\begin{align}
  \label{eq:120}
v^2 x^2-v^2 y z+v y^3+w x y z, \quad -v x^3+v x y z-x y^3.
\end{align}
Under action by $(\Bbb Z /
11 \Bbb Z)^{*}$ via $\langle \cdot \rangle$ and \eqref{eq:71}, identities
\eqref{eq:111}--\eqref{eq:111a} are equivalent to $10$ theta
function identities providing a parameterization for $X(11)$ 
(see \cite[p. 10]{MR1416724}). 

At higher levels, the geometry of the modular curves $X(p)$ was not
examined as closely by Klein, though these cases have been the
subject of \cite{MR1144665,nin,yangL}. The ensuing quadratic
relations may be relevant in describing the modular
curves at levels $13, 17, 19$. We give only a subset of all quadratic relations
and point out that many more may be derived by acting on each identity
by $\Gamma_{0}(p)$. The identities may be proved from the Sturm bound for
each subgroup $\Gamma_{1}(p)$ \cite[\S 3.3]{MR2289048} and the
$q$-expansions of the generators.

\vspace{0.1in}
\begin{thm} If $\mathcal{E}_{a,p}$ is defined by
  \eqref{nie}, the following level $13$ relations hold:
  \begin{align}
\mathcal{E}_{1,13}\mathcal{E}_{3,13} + \mathcal{E}_{2,13}\mathcal{E}_{3,13} - \mathcal{E}_{1,13}\mathcal{E}_{5,13} - \mathcal{E}_{1,13}\mathcal{E}_{7,13} - \mathcal{E}_{2,13}\mathcal{E}_{7,13} - \mathcal{E}_{5,13}\mathcal{E}_{7,13} =0, \\
  \mathcal{E}_{1,13}\mathcal{E}_{4,13} - \mathcal{E}_{1,13}\mathcal{E}_{5,13} +   \mathcal{E}_{3,13}\mathcal{E}_{7,13} +   \mathcal{E}_{4,13}\mathcal{E}_{7,13} -   \mathcal{E}_{5,13}\mathcal{E}_{7,13} =0, \\
\mathcal{E}_{3,13}\mathcal{E}_{5,13}-\mathcal{E}_{1,13}\mathcal{E}_{7,13} - \mathcal{E}_{3,13}\mathcal{E}_{7,13} = 0,
  \end{align}
\end{thm}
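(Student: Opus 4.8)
The plan is to verify each of the three level-$13$ identities by reducing it to a finite computation using the Sturm bound for $\Gamma_1(13)$. Each claimed relation is a homogeneous quadratic expression in the weight-one generators $\mathcal{E}_{a,13}$ from Theorem~\ref{gtd}, hence a modular form of weight two on $\Gamma_1(13)$ (in fact it lies in $\mathcal{M}_2(\Gamma_1(13))$, whose dimension is $13$ by the table preceding Theorem~\ref{lem:mod}). A cusp form of weight $k$ on a congruence subgroup $\Gamma$ that vanishes to order exceeding the Sturm bound must vanish identically; more generally, to show that a weight-two form on $\Gamma_1(13)$ is identically zero it suffices to check that its $q$-expansion vanishes through the Sturm bound $\lfloor \tfrac{k}{12}[\mathrm{PSL}(2,\Bbb Z):\Gamma_1(13)]\rfloor$ at the cusp $\infty$, provided one first confirms the form is holomorphic and cuspidal there (which follows from the product expansions in \eqref{ab}--\eqref{iwb} giving each $\mathcal{E}_{a,13}$ a positive $q$-order except for $\mathcal{E}_{1,13} = 1 + O(q)$). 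So the first step is to assemble, from the explicit eta/theta-quotient product formulas for the generators, the $q$-expansions of $\mathcal{E}_{1,13},\mathcal{E}_{2,13},\mathcal{E}_{3,13},\mathcal{E}_{4,13},\mathcal{E}_{5,13},\mathcal{E}_{7,13}$ up to the needed precision.

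Next I would compute the index $[\mathrm{PSL}(2,\Bbb Z):\Gamma_1(13)]$ (this is $\tfrac{13^2-1}{2} \cdot \tfrac{1}{?}$; concretely the index of $\Gamma_1(N)$ in $\mathrm{SL}_2(\Bbb Z)$ is $N^2\prod_{p\mid N}(1-p^{-2})$, which for $N=13$ gives $168$, and one halves for $\mathrm{PSL}$ when $-I\notin\Gamma_1(13)$, but for $N\ge 4$ one works with $\mathrm{SL}_2$ directly), fix the resulting Sturm bound for weight two, and then for each of the three identities form the corresponding $\Bbb C$-linear combination of products $\mathcal{E}_{a,13}\mathcal{E}_{b,13}$ and verify term-by-term that the $q$-expansion is zero up to that bound. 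Since each summand is itself a theta quotient with an explicit infinite-product form, the products are again theta quotients; in practice one multiplies out the power series numerically (or symbolically in a CAS) and observes the cancellation. The excerpt itself flags that "the identities may be proved from the Sturm bound for each subgroup $\Gamma_1(p)$ and the $q$-expansions of the generators," so this is exactly the intended route.

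Alternatively — and this would give a more conceptual proof avoiding any machine computation — each identity can be recognized as a specialization of the classical theta identity \eqref{eq:13}. Following the template of the proof of Theorem~\ref{eqt}, one replaces $q$ by $q^{13}$ in \eqref{eq:13}, substitutes pairs $(x,y)=(j\pi\tau, k\pi\tau)$ for suitable $1\le j<k\le 6$, and uses \eqref{another}, \eqref{jtp}, \eqref{yi1} together with the product formulas for the $\mathcal{E}_{a,13}$ to express the right-hand side of \eqref{eq:13} as a product $\mathcal{E}_{a,13}\mathcal{E}_{b,13}$ and the left-hand side as a difference $D_j(q)-D_k(q)$ of Lambert-type series; the three relations then emerge from telescoping/linear combinations of these differences, just as $xy+xz+yz=0$ emerged at level seven. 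The bookkeeping is heavier than in the level-seven case because there are more index pairs and the coefficients in the relations are not all $\pm 1$, so one must track which linear combination of the $D_j - D_k$ collapses to zero.

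\textbf{Main obstacle.} The hard part is not conceptual but organizational: correctly matching each product $\mathcal{E}_{a,13}\mathcal{E}_{b,13}$ to the right pair of arguments in \eqref{eq:13} (equivalently, identifying which theta-quotient building blocks $\phi_{13,k}$ appear and with what normalizing roots of unity), and then checking that the signed sum of the corresponding Lambert series telescopes to zero. If instead one takes the Sturm-bound route, the only real obstacle is precision: one needs the $q$-expansions of the generators accurate to roughly the weight-two Sturm bound for $\Gamma_1(13)$ (on the order of $14$ terms), which is a routine but error-prone hand computation and is cleanly handled by a CAS.
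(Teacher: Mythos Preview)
Your primary route---Sturm bound plus $q$-expansions of the generators---is exactly the paper's proof; the paper states in the paragraph introducing the level $13,17,19$ relations that ``the identities may be proved from the Sturm bound for each subgroup $\Gamma_{1}(p)$ and the $q$-expansions of the generators,'' and gives no further argument.

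Two small corrections that would matter in execution: first, Sturm's theorem applies to arbitrary forms in $\mathcal{M}_k(\Gamma_1(N))$, not just cusp forms, so you do not need to separately verify cuspidality---checking $a_0=0$ is simply part of checking vanishing through the bound; second, the index $[\mathrm{SL}_2(\Bbb Z):\Gamma_1(13)]=168$ gives a weight-two Sturm bound of $\lfloor 2\cdot 168/12\rfloor = 28$, not $14$, so you need roughly twice the precision you estimated.
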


\vspace{0.01in}
\begin{thm} With the above notation, the
  theta quotient identities of level $17$ hold:
  \allowdisplaybreaks{\begin{align}
    \label{eq:113}
  & -3 \mathcal{E}_{2,17} \mathcal{E}_{3,17}+2 \mathcal{E}_{7,13}
   \mathcal{E}_{3,17}+3 \mathcal{E}_{13,17}
   \mathcal{E}_{3,17}+\mathcal{E}_{2,17}
   \mathcal{E}_{7,17}+\mathcal{E}_{2,17} \mathcal{E}_{8,17}+3
   \mathcal{E}_{2,17} \mathcal{E}_{11,17} \\ & \qquad  -3
   \mathcal{E}_{5,17} \mathcal{E}_{11,17}+3 \mathcal{E}_{7,17}
 \mathcal{E}_{11,17}-\mathcal{E}_{2,17} \mathcal{E}_{13,17}-3
 \mathcal{E}_{7,17} \mathcal{E}_{13,17}-3 \mathcal{E}_{11,17}
 \mathcal{E}_{13,17}   = 0,  \nonumber \\ \nonumber \\ 
&-9 \mathcal{E}_{2,17} \mathcal{E}_{3,17}+5 \mathcal{E}_{7,17}
\mathcal{E}_{3,17}+9 \mathcal{E}_{13,17} \mathcal{E}_{3,17}-5
\mathcal{E}_{2,17} \mathcal{E}_{7,17}-5 \mathcal{E}_{2,17}
\mathcal{E}_{8,17}+9 \mathcal{E}_{1,17} \mathcal{E}_{11,17}  \\ &
\qquad \qquad \qquad \quad +9
\mathcal{E}_{2,17} \mathcal{E}_{11,17}+9 \mathcal{E}_{7,17}
\mathcal{E}_{11,17}+5 \mathcal{E}_{2,17} \mathcal{E}_{13,17}-9
\mathcal{E}_{11,17} \mathcal{E}_{13,17}  = 0, \nonumber \\    \nonumber \\ 
&-3 \mathcal{E}_{1,17} \mathcal{E}_{5,17}-3 \mathcal{E}_{7,17}
\mathcal{E}_{5,17}+3 \mathcal{E}_{1,17}
\mathcal{E}_{7,17}+\mathcal{E}_{2,17} \mathcal{E}_{7,17}+2
\mathcal{E}_{3,17} \mathcal{E}_{7,17}+\mathcal{E}_{2,17}
\mathcal{E}_{8,17}  \\ & \qquad \qquad \qquad\qquad \quad \qquad \qquad -3 \mathcal{E}_{2,17}
\mathcal{E}_{11,17}-3 \mathcal{E}_{7,17} \mathcal{E}_{11,17}+2
\mathcal{E}_{2,17} \mathcal{E}_{13,17}  = 0 \nonumber \\    \nonumber \\ 
&-9 \mathcal{E}_{2,17} \mathcal{E}_{3,17}+7 \mathcal{E}_{7,17}
\mathcal{E}_{3,17}+9 \mathcal{E}_{8,17} \mathcal{E}_{3,17}+2
\mathcal{E}_{2,17} \mathcal{E}_{7,17}+2 \mathcal{E}_{2,17}
\mathcal{E}_{8,17}+9 \mathcal{E}_{7,17} \mathcal{E}_{11,17}  \\ &
\qquad \qquad \qquad \qquad \qquad\qquad \quad \qquad \qquad\qquad +7 \mathcal{E}_{2,17}
\mathcal{E}_{13,17}-9 \mathcal{E}_{7,17} \mathcal{E}_{13,17}  = 0,
\nonumber \\ \nonumber \\ 
&16 \mathcal{E}_{2,17} \mathcal{E}_{7,17}-16 \mathcal{E}_{3,17}
\mathcal{E}_{7,17}+9 \mathcal{E}_{8,17} \mathcal{E}_{7,17}+9
\mathcal{E}_{1,17} \mathcal{E}_{8,17}+16 \mathcal{E}_{2,17}
\mathcal{E}_{8,17} \\ & \qquad \qquad \qquad\qquad \quad \qquad \qquad
\qquad \qquad \qquad   -9 \mathcal{E}_{2,17} \mathcal{E}_{11,17}-7
\mathcal{E}_{2,17} \mathcal{E}_{13,17} = 0, \nonumber  \\   \nonumber \\ 
&\mathcal{E}_{2,17}
\mathcal{E}_{7,17}-3 \mathcal{E}_{3,17} \mathcal{E}_{5,17}+2 \mathcal{E}_{3,17}
\mathcal{E}_{7,17}+\mathcal{E}_{2,17}
\mathcal{E}_{8,17}-\mathcal{E}_{2,17} \mathcal{E}_{13,17}-3
\mathcal{E}_{7,17} \mathcal{E}_{13,17} = 0.
  \end{align}}
\end{thm}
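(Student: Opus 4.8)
The plan is to recognize each of the six asserted relations as the vanishing of a single weight-two modular form on $\Gamma_{1}(17)$ and then to establish that vanishing by a finite coefficient comparison, the required number of coefficients being supplied by the Sturm bound.

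First I would observe, using Theorem \ref{gtd} (equivalently Theorem \ref{gtd1}), that each of the eight parameters $\mathcal{E}_{a,17}$ with $a\in\{1,2,3,5,7,8,11,13\}$ lies in $\mathcal{M}_{1}(\Gamma_{1}(17))$, so every product $\mathcal{E}_{a,17}\mathcal{E}_{b,17}$ lies in $\mathcal{M}_{2}(\Gamma_{1}(17))$; hence each left-hand side $F_{1},\dots,F_{6}$ of the six displayed identities is a single element of $\mathcal{M}_{2}(\Gamma_{1}(17))$, and because $T\in\Gamma_{1}(17)$ each $F_{j}$ has an ordinary $q$-expansion at the cusp $\infty$. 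I would then invoke the Sturm bound \cite[\S 3.3]{MR2289048}: a form in $\mathcal{M}_{k}(\Gamma_{1}(N))$ whose $q$-expansion at $\infty$ vanishes through the coefficient of $q^{\lfloor k[SL(2,\Bbb Z):\Gamma_{1}(N)]/12\rfloor}$ is identically zero. For $N=17$ one has $[SL(2,\Bbb Z):\Gamma_{1}(17)]=288$, so for $k=2$ it suffices to verify that the coefficients of $q^{0},q^{1},\dots,q^{48}$ of each $F_{j}$ vanish (the valence formula for $\Gamma_{1}(17)$ in fact already permits stopping at $q^{24}$). These coefficients follow at once from the explicit theta-quotient, equivalently eta-quotient, representations of $\mathcal{E}_{1,17},\dots,\mathcal{E}_{13,17}$ recorded in Theorem \ref{gtd1} and in the Introduction, or directly from the Eisenstein expansion \eqref{eq:109}; a routine symbolic computation then confirms $F_{1}=\cdots=F_{6}=0$.

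An alternative derivation, parallel to the proofs of Theorems \ref{eqt} and \ref{ghl}, would replace $q$ by $q^{17}$ in the theta addition formula \eqref{eq:13} and substitute the pairs $(x,y)=(j\pi\tau,\ell\pi\tau)$ for suitable $1\le j<\ell\le 8$; using \eqref{another} and \eqref{yi1} this yields linear relations among the differences of the Lambert-type series $\sum_{n\ge 1} nq^{jn}/(1-q^{17n})$, and eliminating those differences produces the quadratic relations in the $\mathcal{E}_{a,17}$. The only subtle point on that route is identifying which ratio of theta constants $\varphi_{17,k}$ equals which $\mathcal{E}_{a,17}$, a correspondence supplied by Theorem \ref{gtd1}; the Sturm-bound argument avoids this bookkeeping altogether. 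In either approach the single genuine obstacle is organizational rather than conceptual — carrying the $q$-expansions out to the Sturm bound accurately, or keeping the eliminations straight — and no ingredient beyond Theorems \ref{gtd}--\ref{gtd1} and the Sturm bound is required.
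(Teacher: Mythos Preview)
Your proposal is correct and matches the paper's own approach: the paper states just before these theorems that ``the identities may be proved from the Sturm bound for each subgroup $\Gamma_{1}(p)$ \cite[\S 3.3]{MR2289048} and the $q$-expansions of the generators,'' which is precisely your primary argument. Your alternative route via specializations of \eqref{eq:13} is also in the spirit of the paper's proofs of Theorems \ref{eqt} and \ref{ghl}, though the paper does not carry it out explicitly at level $17$.
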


\vspace{0.1in}
\begin{thm} With the above notation, the theta quotient identities of
  level $19$ hold:
 {\allowdisplaybreaks  \begin{align}
    \label{eq:114}
  &  \mathcal{E}_{2,19} \mathcal{E}_{5,19}-\mathcal{E}_{4,19}
    \mathcal{E}_{5,19}+\mathcal{E}_{7,19} \mathcal{E}_{5,19} -3
    \mathcal{E}_{9,19} \mathcal{E}_{5,19}+\mathcal{E}_{11,19}
    \mathcal{E}_{5,19}+\mathcal{E}_{13,19} \mathcal{E}_{5,19}\\ &+\mathcal{E}_{7,19}
    \mathcal{E}_{9,19}-\mathcal{E}_{7,19}
    \mathcal{E}_{11,19}-\mathcal{E}_{2,19}
    \mathcal{E}_{13,19}-\mathcal{E}_{3,19}
    \mathcal{E}_{13,19}+\mathcal{E}_{9,19}
    \mathcal{E}_{13,19}-\mathcal{E}_{11,19} \mathcal{E}_{13,19} = 0, \nonumber
    \\ \nonumber \\ 
&-\mathcal{E}_{1,19} \mathcal{E}_{5,19}+\mathcal{E}_{2,19}
\mathcal{E}_{5,19}+\mathcal{E}_{3,19}
\mathcal{E}_{5,19}-\mathcal{E}_{4,19}
\mathcal{E}_{5,19}+\mathcal{E}_{7,19}
\mathcal{E}_{5,19} -\mathcal{E}_{11,19}
\mathcal{E}_{5,19} \\ & \qquad \quad\  -\mathcal{E}_{13,19} \mathcal{E}_{5,19} +\mathcal{E}_{1,19}
\mathcal{E}_{7,19}-\mathcal{E}_{2,19}
\mathcal{E}_{13,19}-\mathcal{E}_{3,19}
\mathcal{E}_{13,19}+\mathcal{E}_{11,19} \mathcal{E}_{13,19}  = 0,
\nonumber \\  \nonumber \\ 
&-\mathcal{E}_{2,19} \mathcal{E}_{5,19}-\mathcal{E}_{3,19}
\mathcal{E}_{5,19}+\mathcal{E}_{7,19}
\mathcal{E}_{5,19}-\mathcal{E}_{9,19}
\mathcal{E}_{5,19}+\mathcal{E}_{11,19}
\mathcal{E}_{5,19}\\ &\qquad \quad \ +\mathcal{E}_{13,19} \mathcal{E}_{5,19}+\mathcal{E}_{2,19} \mathcal{E}_{7,19}-\mathcal{E}_{7,19}
\mathcal{E}_{11,19}+\mathcal{E}_{2,19}
\mathcal{E}_{13,19}-\mathcal{E}_{11,19} \mathcal{E}_{13,19} = 0,
\nonumber \\ \nonumber \\ 
&\mathcal{E}_{2,19} \mathcal{E}_{5,19}+\mathcal{E}_{3,19}
\mathcal{E}_{5,19}-\mathcal{E}_{7,19}
\mathcal{E}_{5,19}-\mathcal{E}_{11,19}
\mathcal{E}_{5,19}-\mathcal{E}_{13,19}
\mathcal{E}_{5,19}+\mathcal{E}_{3,19} \mathcal{E}_{11,19} \\ &\qquad
\qquad \qquad \qquad \qquad \qquad \quad \ -\mathcal{E}_{2,19}
\mathcal{E}_{13,19}-\mathcal{E}_{3,19}
\mathcal{E}_{13,19}+\mathcal{E}_{11,19} \mathcal{E}_{13,19} = 0,
\nonumber \\ \nonumber \\ 
&-\mathcal{E}_{2,19} \mathcal{E}_{5,19}-\mathcal{E}_{3,19} \mathcal{E}_{5,19}+\mathcal{E}_{7,19} \mathcal{E}_{5,19}+\mathcal{E}_{9,19}
\mathcal{E}_{5,19} \\ & \qquad
\qquad \qquad \qquad  +\mathcal{E}_{13,19}
\mathcal{E}_{5,19}+\mathcal{E}_{2,19}
\mathcal{E}_{11,19}+\mathcal{E}_{3,19}
\mathcal{E}_{13,19}-\mathcal{E}_{11,19} \mathcal{E}_{13,19} = 0,
\nonumber \\ \nonumber \\ 
& -\mathcal{E}_{2,19}
\mathcal{E}_{5,19}-\mathcal{E}_{3,19}
\mathcal{E}_{5,19}+\mathcal{E}_{4,19}
\mathcal{E}_{5,19}+\mathcal{E}_{9,19}
\mathcal{E}_{5,19}+\mathcal{E}_{1,19}
\mathcal{E}_{9,19} \\ & \qquad
\qquad \qquad \qquad \qquad \qquad \qquad \quad \qquad \qquad +\mathcal{E}_{2,19}
\mathcal{E}_{13,19}+\mathcal{E}_{3,19} \mathcal{E}_{13,19} = 0,
\nonumber \\ \nonumber \\ 
& -\mathcal{E}_{2,19}
\mathcal{E}_{3,19}+\mathcal{E}_{5,19}
\mathcal{E}_{3,19}+\mathcal{E}_{9,19}
\mathcal{E}_{3,19}+\mathcal{E}_{2,19}
\mathcal{E}_{5,19}-\mathcal{E}_{4,19}
\mathcal{E}_{5,19}-\mathcal{E}_{2,19} \mathcal{E}_{13,19} = 0, \\ \nonumber \\ 
& \qquad \quad\ \ \  -\mathcal{E}_{3,19} \mathcal{E}_{5,19}-\mathcal{E}_{9,19}
\mathcal{E}_{5,19}+\mathcal{E}_{13,19}
\mathcal{E}_{5,19}+\mathcal{E}_{1,19}
\mathcal{E}_{13,19}+\mathcal{E}_{3,19} \mathcal{E}_{13,19} = 0, \\ \nonumber \\ 
&\qquad \qquad \qquad \qquad \qquad \   \mathcal{E}_{3,19} \mathcal{E}_{5,19}-\mathcal{E}_{4,19} \mathcal{E}_{5,19}+\mathcal{E}_{4,19}
\mathcal{E}_{11,19}-\mathcal{E}_{3,19} \mathcal{E}_{13,19} = 0, \\ \nonumber \\ 
&\qquad \qquad \qquad \qquad \qquad \qquad \qquad \quad\ \  \mathcal{E}_{2,19} \mathcal{E}_{4,19}-\mathcal{E}_{5,19} \mathcal{E}_{9,19}-\mathcal{E}_{3,19} \mathcal{E}_{13,19} = 0.
  \end{align}}
\end{thm}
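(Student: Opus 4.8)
The plan is to treat each of the ten displayed identities as an equality of holomorphic modular forms of weight two for $\Gamma_{1}(19)$ and to verify it on enough Fourier coefficients to invoke the Sturm bound. By Theorem~\ref{gtd} together with Theorem~\ref{lem:mod}, each $\mathcal{E}_{a,19}$ with $a\in\{1,2,3,4,5,7,9,11,13\}$ is one of the chosen holomorphic weight-one generators of $\mathcal{M}(\Gamma_{1}(19))$, with explicit infinite-product expansion recorded in \eqref{eq:85}--\eqref{iwb}, or equivalently with the twisted-Eisenstein expansion \eqref{eq:109}; any other index reduces to this set via \eqref{eq:71}. Hence every product $\mathcal{E}_{a,19}\mathcal{E}_{b,19}$ lies in $\mathcal{M}_{2}(\Gamma_{1}(19))$, and so does each left-hand side above.

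First I would record the relevant Sturm bound: a form in $\mathcal{M}_{2}(\Gamma_{1}(19))$ whose $q$-expansion begins with a power $q^{M+1}$, where $M=\bigl\lfloor\tfrac{2}{12}[\,\mathrm{PSL}(2,\Bbb Z):\overline{\Gamma_{1}(19)}\,]\bigr\rfloor=30$, must vanish identically \cite[\S 3.3]{MR2289048}. Next I would expand each of the nine generators as a truncated $q$-series through order $q^{M}$ straight from its product representation --- only finitely many factors $(1-q^{j})^{\pm1}$ contribute to those coefficients --- substitute these into each of the ten combinations, and check that all coefficients through order $q^{M}$ cancel. The ten identities then follow. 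This is exactly the mechanism already used for the level $13$ and $17$ relations, and the computation, though sizeable, is routine on a computer algebra system and far lighter than the degree-three rank verifications behind Theorem~\ref{lem:mod}.

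An alternative, more classical route, parallel to the proof of Theorem~\ref{eqt}, is to manufacture each relation by hand from the theta identity \eqref{eq:13}. Replacing $q$ by $q^{19}$ and setting $(x,y)=(j\pi\tau,k\pi\tau)$ for $1\le j<k\le 9$ yields, by way of \eqref{another}, \eqref{jtp}, \eqref{pw} and \eqref{yi1}, three-term identities of the shape $D_{k}(q)-D_{j}(q)=\pm\,\mathcal{E}_{a,19}\mathcal{E}_{b,19}$, where $D_{m}(q)=\sum_{n\ge 1} n(q^{mn}+q^{(19-m)n})/(1-q^{19n})$ and the pair $(a,b)$ is fixed by the residues of $k\pm j$ modulo $19$; eliminating the $D_{m}$ among suitable $\Bbb Q$-linear combinations of these relations reproduces the ten displayed quadratics.

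Either way no new structural ingredient is needed: the generators are known holomorphic weight-one forms with explicit expansions, $\mathcal{M}_{2}(\Gamma_{1}(19))$ is controlled by Sturm's bound, and \eqref{eq:13} supplies the source relations. The main obstacle is therefore organizational rather than conceptual --- in the first approach, keeping track of the $M$ Fourier coefficients of the finitely many distinct products that occur; in the second, correctly setting up and solving the elimination system that singles out these ten identities from the full $21$-dimensional space of weight-two quadratic relations among the nine generators.
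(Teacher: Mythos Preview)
Your primary approach is exactly the paper's: the paper states that these identities ``may be proved from the Sturm bound for each subgroup $\Gamma_{1}(p)$ \cite[\S 3.3]{MR2289048} and the $q$-expansions of the generators,'' which is precisely your verification via truncated $q$-expansions against the Sturm bound for $\mathcal{M}_{2}(\Gamma_{1}(19))$. The alternative derivation via specializations of \eqref{eq:13} that you sketch is also in the spirit of the paper's treatment of the lower-level cases (Theorems~\ref{eqt} and~\ref{ghl}), though the paper does not spell it out at level $19$.
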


For each prime level $p\ge 23$,
 a similar permuted basis construction exists for the Eisenstein subspace of
 weight one forms
 realized as the orbit of $\mathcal{E}_{p}$ under $\langle \cdot \rangle$, namely
$$\Bigl \{\langle a \rangle (\mathcal{E}_{p}) \mid a \in (\Bbb Z / p \Bbb
Z)^{*}/\{\pm 1\}  \Bigr \}.$$ These generators do not appear to be expressible
as theta quotients, as is the case in the lower levels, and the generated
graded algebra, in general, does not span
$\mathcal{M}(\Gamma_{1}(p))$. The situation may arise because $23$ is the first prime
level admitting nonzero weight one cusp forms
\cite[Corollary 3]{buzzard}. Since the space of weight one cusp forms
for $\Gamma_{1}(23)$ is spanned by $\eta(\tau) \eta(23\tau)$, a form
for $\Gamma_{0}(23)$ of multiplier $\left ( \frac{\cdot}{23} \right )$, we may construct
a corresponding $\Gamma_{0}(23)$-permuted basis of weight one forms for $ \mathcal{M}_{1}(\Gamma_{1}(23))$
\begin{align}
  \label{eq:121}
  \mathcal{M}_{1}(\Gamma_{1}(23)) = \Bbb C \eta(\tau) \eta(23\tau) \bigoplus_{a \in (\Bbb Z / 23 \Bbb
    Z)^{*}/\{\pm 1\}} \Bbb C \langle a \rangle \mathcal{E}_{23}(\tau)
\end{align}
Although our calculations suggest that product representations do not exist for
$\mathcal{E}_{a,23}$, we nevertheless conjecture that the basis
of weight one forms from \eqref{eq:121} generate the graded
algebra $\mathcal{M}(\Gamma_{1}(23))$.
Imitating the construction at higher
levels to obtain a similar $\Gamma_{0}(p)$-permuted basis of weight one forms for
$\mathcal{M}(\Gamma_{1}(p)$ is
made difficult by the lack of formulas for the dimensions of the
spaces of cusp forms of weight one for $p > 23$.

\end{document}